\tikzset{cross/.style={cross out, draw=black, minimum size=2*(#1-\pgflinewidth), inner sep=0pt, outer sep=0pt},
    cross/.default={1pt}}
\tikzstyle{int}=[draw, fill=blue!20, minimum size=2em]
\tikzstyle{dot}=[circle, draw, fill=blue!20, minimum size=2em]
\tikzstyle{dotred}=[circle, draw, fill=red!20, minimum size=2em]
\tikzstyle{init} = [pin edge={to-,thin,black}]
\tikzstyle{initred} = [pin edge={to-,thin,red}]
\tikzstyle{plan}=[draw, fill=blue!20, minimum size=2em, text width=5em, rounded corners,align=center]
\tikzstyle{planwide}=[draw, fill=blue!20, minimum size=2em, text width=8em, rounded corners,align=center]
\tikzstyle{nodedot}=[circle, draw, fill=white, minimum size=0.3cm,inner sep=0pt]
\tikzstyle{vertexdot}=[circle, draw, fill=white, minimum size=3,inner sep=0pt]
\tikzstyle{Medge}=[green!60!black, thick]
\tikzstyle{Bedge}=[red, thick]
\tikzstyle{Cedge}=[blue, thick]
\tikzstyle{Sedge}=[black, thick]
\tikzstyle{Mgiantedge}=[green!60!black, line width=3.0pt]
\tikzstyle{Bgiantedge}=[red,line width=3.0pt]
\tikzstyle{Cgiantedge}=[blue,line width=3.0pt]
\tikzstyle{Sgiantedge}=[black,line width=3.0pt]
\tikzstyle{shadedgiantnode}=[circle, draw, fill=black!10, minimum size=1cm, inner sep=0pt]
\tikzstyle{unshadedgiantnode}=[circle, draw, fill=white, minimum size=1cm, inner sep=0pt]
\tikzset{my loop/.style =  {to path={
            \pgfextra{}
            [looseness=5,min distance=10mm]
            \tikz@to@curve@path},font=\sffamily\small
}}  
\newcolumntype{C}[1]{>{\centering\arraybackslash}p{#1}}
\theoremstyle{plain}
\newtheorem{theorem}{Theorem}
\newtheorem{lemma}{Lemma}
\newtheorem{proposition}{Proposition}
\theoremstyle{definition}
\newtheorem{definition}{Definition}
\newtheorem{problem}{Problem}
\newtheorem{remark}{Remark}
\newtheorem{claim}{Claim}
\newtheorem*{remark*}{Remark}
\newcommand{\floor}[1]{\left\lfloor #1 \right\rfloor}
\newcommand{\argmax}{\arg\max}
\newcommand{\id}{\mathrm{id}}
\newcommand{\termI}{\text{(I)}}
\newcommand{\termII}{\text{(II)}}
\newcommand{\sigmae}{\sigma^{\sf E}} 
\newcommand{\diverge}{\to\infty}
\newcommand{\iiddistr}{{\stackrel{\text{\iid}}{\sim}}}
\newcommand{\reals}{{\mathbb{R}}}
\newcommand{\naturals}{{\mathbb{N}}}
\newcommand{\Expect}{\mathbb{E}}
\newcommand{\expect}[1]{\mathbb{E}\left[ #1 \right]}
\newcommand{\Prob}{\mathbb{P}}
\newcommand{\prob}[1]{ \mathbb{P}\left\{ #1 \right\} }
\newcommand{\var}{\mathsf{var}}
\def\Var{\mathrm{Var}}
\newcommand{\Bern}{{\rm Bern}}
\newcommand{\Binom}{{\rm Binom}}
\newcommand{\ie}{i.e.\xspace}
\newcommand{\iid}{i.i.d.\xspace}
\newcommand{\pth}[1]{\left( #1 \right)}
\newcommand{\qth}[1]{\left[ #1 \right]}
\newcommand{\sth}[1]{\left\{ #1 \right\}}
\newcommand{\norm}[1]{\|{#1} \|}
\newcommand{\iprod}[2]{\left \langle #1, #2 \right\rangle}
\newcommand{\Iprod}[2]{\langle #1, #2 \rangle}
\newcommand{\indc}[1]{{\mathbf{1}_{\left\{{#1}\right\}}}}
\newcommand{\calD}{{\mathcal{D}}}
\newcommand{\calE}{{\mathcal{E}}}
\newcommand{\calF}{{\mathcal{F}}}
\newcommand{\calG}{{\mathcal{G}}}
\newcommand{\calN}{{\mathcal{N}}}
\newcommand{\calO}{{\mathcal{O}}}
\newcommand{\calP}{{\mathcal{P}}}
\newcommand{\calQ}{{\mathcal{Q}}}
\newcommand{\calS}{{\mathcal{S}}}
\newcommand{\calT}{{\mathcal{T}}}
\newcommand{\orbit}{O}
\newcommand{\ER}{Erd\H{o}s-R\'enyi\xspace}
\newcommand{\Tr}{\mathsf{Tr}}
\renewcommand{\tilde}{\widetilde}
\renewcommand{\hat}{\widehat}
\newcommand{\mmse}{\mathrm{mmse}}
\newcommand{\overlap}{\mathsf{overlap}}
\newcommand{\Api}{A^{\pi}}
\newcommand{\tpi}{{\widetilde{\pi}}}
\newcommand{\hpi}{{\pi'}}
\newcommand{\iii}{{i}}
\newcommand{\stepa}[1]{\overset{\rm (a)}{#1}}
\newcommand{\stepb}[1]{\overset{\rm (b)}{#1}}
\newcommand{\stepc}[1]{\overset{\rm (c)}{#1}}
\newcommand{\stepd}[1]{\overset{\rm (d)}{#1}}
\newcommand{\piML}{\hat{\pi}_{\sf ML}}
\begin{document}
    \title{
    Settling the Sharp Reconstruction Thresholds \\of Random Graph Matching}
    \author{Yihong Wu, Jiaming Xu, and Sophie H. Yu \thanks{
    This paper was presented in part at the IEEE International Symposium on Information Theory, July, 2021. 
            Y.\ Wu is with Department of Statistics and Data Science, Yale University, New Haven CT, USA, 
            \texttt{yihong.wu@yale.edu}.
            J.\ Xu and S.\ H.\ Yu are with The Fuqua School of Business, Duke University, Durham NC, USA, 
            \texttt{\{jx77,haoyang.yu\}@duke.edu}.
            Y.~Wu is supported in part by the NSF Grant CCF-1900507, an NSF CAREER award CCF-1651588, and an Alfred Sloan fellowship.
            J.~Xu is supported by the NSF Grants IIS-1838124, CCF-1850743, and CCF-1856424.
    }}

    \date{\today}

    
    \maketitle
    \begin{abstract} 
     This paper studies the problem of recovering the hidden vertex correspondence between two edge-correlated random graphs.
     We focus on the Gaussian model where the two graphs are complete graphs with correlated Gaussian weights and the  \ER model where the two graphs are subsampled from 
        a common parent \ER graph $\calG(n,p)$.  For dense \ER graphs with $p=n^{-o(1)}$, we prove that there exists a sharp threshold, above which one can correctly match all but a vanishing fraction of vertices and below which correctly matching any positive fraction is impossible,    a phenomenon known as the ``all-or-nothing'' phase transition. 
     Even more strikingly, in the Gaussian setting, above the threshold all vertices can be exactly matched with high probability. In contrast, for sparse \ER graphs with $p=n^{-\Theta(1)}$, we show that the all-or-nothing phenomenon no longer holds and
     we determine the thresholds up to a constant factor.
      Along the way, we also derive the sharp threshold for exact recovery, sharpening the existing results in Erd\H{o}s-R\'enyi graphs~\cite{cullina2016improved,cullina2017exact}.
    
    The proof of the negative results builds upon a tight characterization of the mutual information based on the truncated second-moment computation in~\cite{wu2020testing}  and an ``area theorem'' that relates the mutual information to the integral of the reconstruction error. The positive  results follows from a tight analysis of the maximum likelihood estimator that takes into account the cycle structure of the induced permutation on the edges.


    \end{abstract}

    \tableofcontents

    \section{Introduction}

    The problem of graph matching (or network alignment) refers to 
    finding the underlying vertex correspondence between two graphs on the sole basis of their network topologies. 
    Going beyond the worst-case intractability of finding the optimal correspondence (quadratic assignment problem  \cite{pardalos1994quadratic,burkard1998quadratic}), an emerging line of research is devoted to the average-case analysis of graph matching under meaningful statistical models, focusing on either information-theoretic limits \cite{cullina2016improved,cullina2017exact,cullina2019partial,hall2020partial,wu2020testing,ganassali2020sharp} or computationally efficient algorithms \cite{feizi2016spectral,lyzinski2016graph,ding2018efficient,barak2019nearly,FMWX19a,FMWX19b,ganassali2020tree}. 
    Despite these recent advances, the sharp thresholds of graph matching remain not fully understood especially for approximate reconstruction. The current paper aims to close this gap.
    
    Following \cite{pedarsani2011privacy,ding2018efficient}, we consider the following probabilistic model for 
    two random graphs correlated through a hidden vertex correspondence. Let the ground truth
    $\pi$ be a uniformly random permutation on $[n]$.
    We generate two random weighted graphs on the common vertex set $[n]$ with (weighted) adjacency vectors
    $A=(A_{ij})_{1\le i<j\le n}$ and $B=(B_{ij})_{1 \le i<j \le n}$ 
    such that $\left(A_{\pi(i)\pi(j)}, B_{ij}\right)$ are i.i.d.\ pairs of correlated
    random variables with a joint distribution $P$, which implicitly depends on $n$.
Of particular interest are the following two special cases:
    \begin{itemize}
        \item (Gaussian model): $
        P= \calN  \Big( \left(\begin{smallmatrix} 0\\ 0\end{smallmatrix}\right) , \left(\begin{smallmatrix} 1 & \rho \\ \rho & 1 \end{smallmatrix}\right)  \Big )$ is the joint distribution of two standard Gaussian random variables with correlation coefficient $\rho>0$.       
        In this case, we have $B=\rho A^\pi + \sqrt{1-\rho^2} Z$, where $A$  and $Z$ are independent standard normal vectors and $A^{\pi}_{ij}=A_{\pi(i)\pi(j)}$. 
        
        \item (\ER random graph): $P$ denotes the joint distribution of two correlated $\Bern(q)$ random variables $X$ and $Y$
        such that $\prob{Y=1\mid X=1} =s$, where $q \leq s \leq 1$. In this case, $A$ and $B$ are the adjacency vectors of two \ER random graphs
        $G_1, G_2 \sim \calG(n,q)$, where $G_1^\pi$ (with the adjacency vector $A^\pi$) and $G_2$ are independently edge-subsampled from a common parent graph  $G\sim \calG(n,p)$ with $p=q/s$.
    \end{itemize}

    Given the observation $A$ and $B$, the goal is to recover the latent
    vertex correspondence $\pi$ as accurately as possible.
    More specifically, given two permutations $\pi,\hat{\pi}$ on $[n]$,
    denote the fraction of their overlap by
    \[
    \overlap(\pi, \hat{\pi}) \triangleq \frac{1}{n} \left| \left\{i \in [n]: \pi(i)=\hat{\pi}(i) \right\}\right|.
    \]
        \begin{definition}
        We say an estimator $\hat{\pi}$ of $\pi$ achieves, as $n\diverge$,
        \begin{itemize}
            \item \emph{partial}  recovery, if $\prob{ \overlap \left(  \hat{\pi}, \pi  \right) \ge \delta  } = 1- o(1)$ for some constant $\delta \in (0,1)$;
            \item \emph{almost exact}  recovery, if $\prob{ \overlap \left(  \hat{\pi}, \pi  \right) \ge \delta  } = 1- o(1)$ for any constant $\delta \in (0,1)$;
            \item \emph{exact}  recovery, if $\prob{ \overlap \left(  \hat{\pi}, \pi  \right) =1  } = 1- o(1)$.         
        \end{itemize}
    \end{definition}

    The information-theoretic threshold of exact recovery has been determined for the \ER graph model \cite{cullina2017exact} 
in the regime of $p= O\left( \log^{-3}(n) \right)$ and more recently for the Gaussian model \cite{ganassali2020sharp}; however, the results and proof techniques in \cite{cullina2017exact} do not hold for denser graphs. In contrast, approximate recovery are far less well understood. 
    Apart from the sharp condition for almost exact recovery in the sparse regime $p= n^{-\Omega(1)}$ \cite{cullina2019partial}, only upper and lower bounds are known for partial recovery \cite{hall2020partial}. See \prettyref{sec:prior} for a detailed review of these previous results.
    
    In this paper, we characterize the sharp reconstruction thresholds for both the Gaussian and dense \ER graphs with $p=n^{-o(1)}$. Specifically, we prove that there exists a sharp threshold, above which one can estimate all but a vanishing fraction of the latent permutation and below which recovering any positive fraction is impossible, 
    a phenomenon known as the ``all-or-nothing'' phase transition~\cite{reeves2019all}. This phenomenon is even more striking in the Gaussian model, in the sense that above the threshold the hidden permutation can be estimated error-free with high probability.
    In contrast, for sparse \ER graphs with $p=n^{-\Theta(1)}$, we show that the all-or-nothing phenomenon no longer holds. 
    To this end, we determine the threshold for partial recovery up to a constant factor and show that it is order-wise smaller than the almost exact recovery threshold found in~\cite{cullina2019partial}.    
    
    
    Along the way, we also 
derive a sharp threshold for exact recovery, 
    sharpening existing results in~\cite{cullina2016improved,cullina2017exact}.
    As a byproduct, the same technique yields an alternative proof of the result in~\cite{ganassali2020sharp} for the Gaussian model.

    \subsection{Main results}
    Throughout the paper, we let $\epsilon>0$ denote an arbitrarily small but fixed constant.
    Let     $\piML$ denote the maximum likelihood estimator, which reduces to 
    \begin{equation}
        \piML \in \argmax_{\pi'} \iprod{A^{\pi'}}{B} \, .
        \label{eq:ML}
    \end{equation}
    
    \begin{theorem}[Gaussian   model]\label{thm:Gaussian_negative}
    If 
    \begin{equation}
        \rho^2 \ge \frac{(4+\epsilon) \log n}{n},
        \label{eq:Gaussian_positive}
        \end{equation}
        then 
        $
        \prob{ \overlap \left( \hat{\pi}_{\rm ML}, \pi \right) =1 } = 1-o(1).
        $
        
Conversely, if 
        \begin{equation}
        \rho^2 \le \frac{(4-\epsilon) \log n}{n},
        \label{eq:Gaussian_negative}
        \end{equation}
 then for any estimator $\hat{\pi}$
        and any fixed constant $\delta>0$,
        $
        \prob{ \overlap \left( \hat{\pi}, \pi \right) \le \delta } = 1-o(1).
        $
    \end{theorem}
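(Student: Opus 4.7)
My plan is to handle the two directions separately: for the positive direction, I would carry out a union bound over permutations exploiting the orbit structure of the induced edge permutation, while for the negative direction, I would combine a mutual information upper bound with an area theorem that converts it to a lower bound on the reconstruction error.

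\textbf{Positive direction.} By symmetry, assume $\pi = \mathrm{id}$, so that $\piML = \mathrm{id}$ iff $\Delta_\sigma := \iprod{A^\sigma - A}{B} \le 0$ for every $\sigma \neq \mathrm{id}$. Writing $B = \rho A + \sqrt{1-\rho^2}\,Z$ with $Z$ an independent standard Gaussian, conditional on $A$ we have $\Delta_\sigma \sim \calN(-\rho W_\sigma/2, (1-\rho^2) W_\sigma)$ with $W_\sigma = \|A^\sigma - A\|^2$, yielding the Gaussian tail
\[
\prob{\Delta_\sigma > 0 \mid A} \le \exp\!\left(-\frac{\rho^2 W_\sigma}{8(1-\rho^2)}\right).
\]
I would then decompose $W_\sigma$ along the orbits of the induced edge permutation $\sigma^E$ on $\binom{[n]}{2}$: distinct orbits involve disjoint entries of $A$ and contribute independently, with each orbit of length $L$ giving a circulant quadratic form of mean $2L$, so sub-exponential concentration yields $W_\sigma \approx 2m(\sigma)$, where $m(\sigma)$ is the number of non-fixed edges. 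Union-bounding over $\sigma$ and partitioning by vertex cycle type, the threshold-determining case is transpositions: the $\binom{n}{2}$ of them satisfy $m=2(n-2)$, giving a total contribution $\binom{n}{2}\exp(-\rho^2(n-2)/(2(1-\rho^2))) = O(n^{-\epsilon/2})$, which vanishes exactly at the stated constant $4$.

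The main obstacle on the positive side is controlling derangements and other permutations with $k = \Theta(n)$ non-fixed vertices, where the Gaussian tail bound $\exp(-\Theta(n\log n))$ is matched by the combinatorial count $n! = \exp(\Theta(n\log n))$; to close this case I would use the exact orbit-wise moment generating function, given by a determinantal expression $\E[\exp(\lambda \Delta_O)] = \det(I + 2\mu_\lambda Q_O)^{-1/2}$ with $Q_O$ the circulant covariance of the per-orbit differences, to extract the extra factor needed to beat the cycle-type count $n!/\prod_\ell \ell^{m_\ell} m_\ell!$.

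\textbf{Negative direction.} The plan is to combine a mutual information upper bound with a partial-recovery analog of Fano's inequality. First I would invoke the truncated second-moment computation from~\cite{wu2020testing}, which under $\rho^2 \le (4-\epsilon)\log n/n$ yields $I(\pi; A, B) = o(\log n!) = o(n\log n)$. The truncation is essential because at the sharp constant $4$, the raw second moment of the likelihood ratio is driven by a vanishing-probability event in which a transposition-type perturbation has atypically large correlation; excising this event gives a clean KL upper bound. Second, by the area theorem developed in this paper, any estimator $\hat\pi$ with $\E[\overlap(\hat\pi, \pi)] \ge \delta$ would force $I(\pi; A, B) \ge c(\delta)\, n\log n$ for some positive $c(\delta)$; combined with the above MI bound, this yields a contradiction for every fixed $\delta > 0$. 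The main obstacle here is establishing the area theorem with sharp constants: standard Fano-type arguments handle exact recovery cleanly, but partial-recovery lower bounds require a chain-rule argument along an interpolation path in an auxiliary noise parameter, so that small MI forces the posterior to be spread near-uniformly over permutations at bounded overlap.
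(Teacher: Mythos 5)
Your negative direction rests on a quantitative claim that is false. The truncated second-moment computation of \cite{wu2020testing} does not show that $I(\pi;A,B)=o(n\log n)$; it shows $D(\calP_{A,B}\|\calQ_{A,B})=o(1)$, i.e.\ a \emph{matching lower bound} $I(\pi;A,B)=\binom{n}{2}I(P)-o(1)$, and with $\rho^2=(4-\epsilon)\log n/n$ this equals $(1-\tfrac{\epsilon}{4}+o(1))\,n\log n$ — nearly the full entropy $\log n!$. Consequently the Fano/rate--distortion step you propose (``overlap $\ge\delta$ forces $I\gtrsim \delta\, n\log n$'') only gives a contradiction for $\delta$ close to $1$: it rules out almost exact recovery (this is the content of Remark 1) but says nothing about small fixed $\delta$, which is exactly the limitation of the Fano-based bounds of \cite{hall2020partial} that this theorem is meant to overcome. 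The paper's argument runs in the opposite direction: the \emph{near-maximality} of the mutual information is fed into the I-MMSE identity $I(A,B;\pi)=\tfrac12\int_0^{\rho^2}\mmse_\theta(A^\pi)(1-\theta)^{-2}\,d\theta$, which together with $\mmse_\theta\le\binom{n}{2}$ and monotonicity in $\theta$ forces $\mmse_{\theta_0}(A^\pi)\ge\binom{n}{2}(1-O(\log n/n))$ at $\theta_0=(4-\epsilon)\log n/n$; a separate step (Proposition 4, via the interpolated estimator $\alpha_0A^{\hat\pi}+(1-\alpha_0)\E{A}$ and a Hanson--Wright bound on $\iprod{A^\pi}{A^{\pi'}}$ uniform over $\pi'$) then converts triviality of the MMSE of $A^\pi$ into $\E{\overlap(\hat\pi,\pi)}=o(1)$ for \emph{every} estimator. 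Your ``area theorem'' as described is a converse on $I(\pi;\hat\pi)$, not this MMSE argument, so the negative part does not go through at the sharp constant.

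In the positive direction, your transposition computation is correct and is indeed the bottleneck, but your proposed fix for $k=\Theta(n)$ non-fixed vertices cannot work. Conditionally on $A$, $\iprod{A^{\pi'}-A^{\pi}}{B}$ is Gaussian with mean $-\rho W/2$ and variance $(1-\rho^2)W$ with $W\approx 2m$, so the probability of the bad event is genuinely $\exp\left(-(1+o(1))\rho^2 m/4\right)$ (your Chernoff bound is tight in the exponent, and the exact orbit-wise MGF of the \emph{difference} — the paper's $L_2=\sqrt{1-\rho^2}$ at the optimal tilt — gives the same $\rho^2m/4$ rate). For derangements, $m\approx n^2/2$ and $\rho^2 n=(4+\epsilon)\log n$ give per-permutation probability $e^{-(\frac12+o(1))n\log n}$ against $e^{(1-o(1))n\log n}$ permutations, so a plain union bound diverges no matter how precisely you evaluate the determinantal MGF; this is why the paper uses the difference MGF only for $k\le \epsilon n/16$. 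The missing idea is the separate treatment of the two terms for large $k$: $Y=\sum_{(i,j)\notin\binom{F}{2}}A_{\pi(i)\pi(j)}B_{ij}$ depends on $\pi'$ only through its fixed-point set $F$, so its concentration around $\rho m$ (Hanson--Wright) only has to beat $\binom{n}{k}$ choices, while the cross term $X_{\pi'}$, analyzed through the orbit MGF $M_2$, has Chernoff exponent $\approx\rho^2 m/2$ — a factor $2$ better than the difference — which does beat the $n^k$ count. Without this decomposition (or an equivalent device exploiting that the events share the $Y$ component), the $k=\Theta(n)$ case of your argument fails.
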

\prettyref{thm:Gaussian_negative} implies that in the Gaussian setting,
    the recovery of $\pi$ exhibits a sharp phase transition
    in terms of the limiting value of $\frac{n\rho^2}{\log n}$ at threshold $4$, above which exact recovery is possible
    and below which even partial recovery is impossible. 
    The positive part of~\prettyref{thm:Gaussian_negative} was first shown in~\cite{ganassali2020sharp}. Here we provide an alternative proof
    that does not rely on the Gaussian property and works for \ER graphs as well.

    The next result determines the sharp threshold for the \ER model in terms of the key quantity $nps^2$, the average degree of the intersection graph $G_1\wedge G_2$ (whose edges are sampled by both $G_1$ and $G_2$).

    \begin{theorem}[\ER graphs, dense regime]\label{thm:ER_dense}
        Assume $p$ is bounded away from $1$
        and $p=n^{-o(1)}$. 
        If 
        \begin{equation}        
        nps^2 \ge \frac{(2+\epsilon) \log n}{  \log \frac{1}{p} -1 + p },
        \label{eq:ER_dense1}
        \end{equation}
        then for any constant $\delta<1$, 
        $
        \prob{ \overlap \left( \hat{\pi}_{\rm ML}, \pi \right) \ge \delta  } = 1- o(1).
        $
        Conversely, if 
        \begin{equation}        
        nps^2 \le \frac{(2-\epsilon) \log n}{  \log \frac{1}{p} -1 + p },
        \label{eq:ER_dense2}
        \end{equation}
        then for any estimator $\hat{\pi}$ and any constant $\delta>0$, 
        $
        \prob{ \overlap \left( \hat{\pi}, \pi \right) \le \delta  } = 1-o(1).
        $
    \end{theorem}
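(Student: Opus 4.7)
The positive direction requires a sharp analysis of the ML estimator that exploits the cycle structure of the induced edge permutation, while the converse combines a truncated second-moment computation with an area-theorem-type bound on mutual information. I treat the two directions separately.

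For achievability, my plan is to bound the probability that $\piML$ disagrees with $\pi$ on more than $\delta n$ vertices by a union bound over competitor permutations $\pi'$, organized by the vertex-cycle profile of $\sigma \triangleq \pi^{-1}\pi'$. The estimator prefers $\pi'$ over $\pi$ iff the score difference $\langle A^{\pi'}-A^\pi, B\rangle$ is nonnegative, and this sum is supported on the ``discrepancy edges'' $\{i,j\}$ with $\{\pi(i),\pi(j)\}\neq\{\pi'(i),\pi'(j)\}$; its dependence structure is governed by the induced edge permutation $\sigmae$. The key step is a Chernoff-type tail bound obtained by computing exponential moments \emph{cycle by cycle} along $\sigmae$: within each edge cycle, the pairs $(A_e, B_e)$ form a stationary dependent sequence whose joint law is explicit in $p$ and $s$. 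Summing the resulting log-MGFs and optimizing the Chernoff parameter yields a large-deviation exponent of order $n p s^2\bigl(\log\tfrac{1}{p} - 1 + p\bigr)$ per non-fixed vertex of $\sigma$, which captures the Poisson-type rate appropriate to the regime $p = n^{-o(1)}$ and is strictly sharper than the Hellinger-type bound one would obtain by pretending the discrepancy edges were independent. Matching this exponent against the combinatorial count $\binom{n}{k}k! \le n^k$ of permutations with $k$ non-fixed vertices and summing over $k\in[\delta n, n]$ delivers~\eqref{eq:ER_dense1}.

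For the converse, I would proceed in two steps. First, adapt the truncated second-moment computation of \cite{wu2020testing} -- originally formulated as a hypothesis-testing lower bound -- to produce the sharp upper bound
\[
I(\pi;A,B) \;\le\; \tfrac{n^2 ps^2}{2}\bigl(\log\tfrac{1}{p} - 1 + p\bigr)\,(1+o(1)),
\]
which under \eqref{eq:ER_dense2} stays strictly below $n\log n$. Second, invoke the area theorem alluded to in the abstract (the identity relating $I(\pi;A,B)$ to the integral of the reconstruction error along a signal-strength interpolation), which converts this mutual-information bound into the impossibility of partial recovery at any constant $\delta>0$. Intuitively, the area identity enforces an all-or-nothing dichotomy: once $I(\pi;A,B)$ is bounded above by a subcritical quantity, the reconstruction error is forced to stay close to its trivial value throughout the interpolation, ruling out overlap bounded away from zero at the endpoint.

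The main obstacle is the first step of the converse. The raw second moment of the posterior partition function diverges due to a few atypical permutations whose cycle structure relative to $\pi$ is dominated by short cycles -- precisely the pathology encountered in the testing analysis of \cite{wu2020testing}. Their truncation to generic permutation types cures the second-moment blowup in that setting, but transporting it to a mutual-information upper bound requires additionally controlling the contribution of the truncated portion to $I(\pi;A,B)$, which in turn rests on exponential-moment estimates for the cycle profile of a uniformly random permutation and on a careful comparison between the truncated and the full posterior. By comparison, the positive direction -- once the correct per-edge-cycle exponential moment is identified -- reduces to a relatively standard combinatorial bookkeeping exercise.
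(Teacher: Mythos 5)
Your achievability plan has a genuine gap. Applying a Chernoff bound directly to the score difference $\langle A^{\pi'}-A^{\pi},B\rangle$, computed orbit-by-orbit along the edge permutation, and union bounding against the $n^k$ permutations with $k$ non-fixed vertices does \emph{not} yield the exponent you claim. The per-orbit MGF of the \emph{difference} satisfies $L_\ell\le L_2^{\ell/2}$ with optimal tilt capped at $e^{t}=\sqrt{p_{00}p_{11}/(p_{01}p_{10})}\approx 1/\sqrt{p}$ (the $e^{-t}$ term contributed by the true-permutation part forbids tilting further), so the resulting rate per discrepancy edge is the Hellinger-type quantity $\bigl(\sqrt{p_{00}p_{11}}-\sqrt{p_{01}p_{10}}\bigr)^2\asymp ps^2(1-\sqrt{p})^2$ --- not $\tfrac12 ps^2\bigl(\log\tfrac1p-1+p\bigr)$, and not sharper than the independent-edge Hellinger bound. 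With $m\asymp kn$ discrepancy edges this forces the condition $nps^2(1-\sqrt{p})^2\gtrsim 2\log n$, i.e.\ essentially the \emph{exact}-recovery threshold, which exceeds \eqref{eq:ER_dense1} by the factor $\frac{\log(1/p)-1+p}{(1-\sqrt{p})^2}\ge 2$, unbounded when $p=n^{-o(1)}\to 0$. The paper reaches the constant $2$ by a different mechanism: center $A,B$, split the difference as $\overline X-\overline Y$, observe that $\overline Y$ (the truth term off the fixed edge orbits) depends on $\pi'$ only through its fixed-point set $F$, so its deviation costs a union bound over only $\binom nk$ sets and is handled by Bernstein concentration; then a \emph{one-sided} Chernoff bound on $\overline X$ alone exceeding $\approx\Expect[\overline Y]=mps^2(1-p)$, with tilt as large as $\log\tfrac1p$, produces the KL-type rate $\tfrac m2\,ps^2(\log\tfrac1p-1+p)$ (\prettyref{lmm:ER_fixing_k}, \prettyref{lmm:M_2}). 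This separation of the two terms (and the choice of tilt) is exactly what distinguishes the sharp constant from the loose results of \cite{hall2020partial}, who also had the bound $M_\ell\le M_2^{\ell/2}$ but analyzed the difference directly.

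The converse outline also has its logic inverted. The bound $I(\pi;A,B)\le\binom n2 I(P)=(1+o(1))\tfrac{n^2}{2}ps^2(\log\tfrac1p-1+p)$ is immediate from nonnegativity of $D(\calP_{A,B}\|\calQ_{A,B})$ and needs no second-moment work; moreover an upper bound on $I(\pi;A,B)$ below $n\log n$, fed into Fano or a rate--distortion argument, rules out only \emph{almost exact} recovery --- that is precisely why earlier Fano-based arguments could not settle partial recovery. The truncated second moment of \cite{wu2020testing} is needed for the matching \emph{lower} bound: under \eqref{eq:ER_dense2} it shows $D(\calP_{A,B}\|\calQ_{A,B})=o(1)$ (after upgrading the $\chi^2$/TV control to a KL control, \prettyref{lmm:trunc2MI}), hence $I(\pi;A,B)=\binom n2 I(P)-o(1)$ (\prettyref{prop:MI}). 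Only with this lower bound in hand does the area theorem (\prettyref{prop:I_MMSE_ER}), which upper-bounds $I$ by $\binom n2 I(P)$ plus an integral of the nonpositive quantity $\mmse_\theta(A^\pi)-\binom n2 q(1-q)$ along the interpolation, force $\mmse_\theta(A^\pi)$ to be asymptotically trivial at $\theta=(1-\epsilon)s$; a further, separate step (\prettyref{prop:graph_to_permutation}) converts near-trivial MMSE of $A^\pi$ into vanishing overlap of any $\hat\pi$, and arbitrariness of $\epsilon$ concludes. As written, your claim that a subcritical upper bound on $I$ together with the area identity forces the reconstruction error to be trivial has the implication running the wrong way and leaves the partial-recovery impossibility unproved.
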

    \prettyref{thm:ER_dense} implies that analogous to the Gaussian   model, 
    in dense \ER graphs, the recovery of $\pi$ exhibits an ``all-or-nothing'' phase transition
    in terms of the limiting value of $\frac{nps^2\left( \log \frac{1}{p} -1 + p \right) }{\log n}$ at threshold $2$, above which almost exact recovery is possible
    and below which even partial recovery is impossible. However, as we will see in~\prettyref{thm:exact_ER},
    unlike the Gaussian   model, this threshold differs from that of exact recovery. 

    
\begin{remark}[Entropy interpretation of the thresholds]
    \label{rmk:interp}
        The sharp thresholds in \prettyref{thm:Gaussian_negative} and \prettyref{thm:ER_dense} can be interpreted from an information-theoretic perspective.
        Suppose an estimator $\hat\pi=\hat\pi(A,B)$ achieves almost exact recovery with $\Expect[\overlap(\pi,\hat\pi)]=1-o(1)$, which, by a rate-distortion computation, implies that $I(\pi;\hat\pi)$ must be close to the full entropy of $\pi$, that is, $I(\pi;\hat\pi) = (1-o(1)) n\log n$. On the other hand, by the data processing inequality, we have $I(\pi;\hat\pi) \leq I(\pi;A,B)$. The latter can be bounded simply as (see  \prettyref{sec:technique-negative-partial})
        \begin{equation}
        \label{eq:MI-simple}
        I(\pi;A,B) \le \binom{n}{2} I(P),
        \end{equation}
        where $I(P)$ denote the mutual information between a pair of random variables with joint distribution $P$.
        For the Gaussian   model, we have 
        \begin{equation}
        I(P) = \frac{1}{2} \log \frac{1}{1-\rho^2}.
        \label{eq:IP-gaussian}
        \end{equation}      
        For the correlated \ER graph,
        \begin{equation}
        I(P) = q d(s\|q) + (1-q) d(\eta \|q),
        \label{eq:IP-ER}
        \end{equation}
        where $q=ps$, $\eta \triangleq \frac{q(1-s)}{1-q}$, and $d(s\|q) \triangleq D(\Bern(s)\|\Bern(q))$ denotes the binary KL divergence.
        By Taylor expansion, we have $I(P) = s^2 p \pth{p-1+\log\frac{1}{p}}(1-o(1))$.
        Combining these with $\binom{n}{2} I(P) \geq (1-o(1))n\log n$ shows
        the impossibility of 	almost exact recovery 
        under the conditions \prettyref{eq:Gaussian_negative} and \prettyref{eq:ER_dense2}.
        The fact that they are also necessary for partial recovery takes more effort to show, which we do in \prettyref{sec:impossibility}. 
    \end{remark}
    
    \begin{theorem}[\ER graphs, sparse regime]\label{thm:ER_sparse}
        Assume $p=n^{-\Omega(1)}$. 
        If 
        \begin{equation}
            \label{eq:partial-sparse-positive}
        np s^2 \ge (2+\epsilon) \max\left\{ \frac{\log n}{\log (1/p)} , 2 \right\},
        \end{equation}
        then there exists a constant $\delta>0$ such that 
        $
        \prob{ \overlap \left( \hat{\pi}_{\rm ML}, \pi \right) \ge \delta  } = 1- o(1).
        $
        Conversely, assuming $np=\omega(\log^2 n)$, if 
        \begin{align}
            nps^2 \le 1-\epsilon,
            \label{eq:2lowerbound_sparse_strong}
        \end{align} 
        then for any estimator $\hat{\pi}$ and any constant $\delta>0$, 
        $
        \prob{ \overlap \left( \hat{\pi}, \pi \right) \le \delta  } = 1-o(1).
        $
    \end{theorem}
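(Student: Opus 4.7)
The plan is to handle the positive and negative parts separately: the positive via a cycle-type union bound on the MLE, and the negative via the truncated second-moment / area-theorem strategy announced in \prettyref{rmk:interp}.

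For the positive direction, I analyze $\hat\pi_{\rm ML}\in \argmax_{\pi'} \langle A^{\pi'},B\rangle$ and show that with probability $1-o(1)$, no permutation $\pi'$ with $\overlap(\pi,\pi')<\delta$ satisfies $\langle A^{\pi'}-A^{\pi},B\rangle\ge 0$. Writing $\sigma=\pi^{-1}\pi'$, I group such $\pi'$ by the cycle type of $\sigma$, letting $k$ denote the number of non-fixed points. The inner-product difference decomposes as a sum over pairs $(i,j)$ with at least one endpoint in the support of $\sigma$, with a negative drift proportional to $ps^2(1-p)$ per moved edge. Applying a per-edge Chernoff bound together with the enumeration $\binom{n}{k}k!\le n^k$ and a finer count within each cycle type, the union bound is $o(1)$ as long as two regimes of cycle length are both controlled: long cycles ($k=\Omega(n)$), where the per-edge Chernoff exponent is essentially $ps^2\log(1/p)$ in the sparse regime, and comparison with $n^k$ produces the condition $nps^2\ge (2+\epsilon)\log n/\log(1/p)$; and short cycles (of length $O(1)$), where the enumeration is polynomial in $n$ but the $\log(1/p)$ factor is absent, giving the alternative condition $nps^2\ge 4+\epsilon$. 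This dichotomy explains the $\max\{\cdot,2\}$ in \prettyref{eq:partial-sparse-positive}.

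For the converse, the naive bound \prettyref{eq:MI-simple} is too lossy in the sparse regime $nps^2=O(1)$, so I use a \emph{truncated} likelihood ratio following \cite{wu2020testing}. Let $\calE$ be the event that every connected component of the intersection graph $G_1\wedge G_2$ has size at most $O(\log n)$; under $nps^2\le 1-\epsilon$ this is a subcritical \ER graph and $\prob{\calE}=1-o(1)$ by branching-process estimates, with the assumption $np=\omega(\log^2 n)$ entering to ensure sufficiently tight concentration of both $\prob{\calE}$ and the component-size profile. Letting $L$ denote the likelihood ratio between the planted model and a fixed alternative permutation, the cycle expansion of $\Expect[L^2\indc{\calE}]$ shows that only cycles of the alternative permutation contained within small components of $G_1\wedge G_2$ contribute, yielding $\Expect[L^2\indc{\calE}]=1+o(1)$ and hence $I(\pi;A,B)=o(n\log n)$. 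An area theorem---expressing $I(\pi;A,B)$ as an integral of the Bayes-optimal reconstruction error along the correlation parameter $s$ and combining with a rate-distortion / Fano inequality---then forces any estimator $\hat\pi$ with $\overlap(\hat\pi,\pi)\ge\delta$ with nonvanishing probability to satisfy $I(\pi;A,B)=\Omega(\delta n\log n)$, contradicting the MI bound.

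The principal obstacle is the truncated second-moment estimate: one must rule out contributions from alternative permutations whose cycle support fits almost exactly inside the small components of the intersection graph, which demands careful enumeration based on cycle polynomials of subcritical random subgraphs and a sharp tradeoff between the truncation size and the tail of the likelihood ratio. A secondary subtlety is the area-theorem step, which does not reduce to a clean I-MMSE identity in the discrete Bernoulli setting; a workaround is to integrate along $s$ directly and bound the resulting partial derivatives of the mutual information, mirroring the device used in the proof of the converse half of \prettyref{thm:ER_dense}.
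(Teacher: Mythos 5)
Your converse strategy has its information-theoretic logic inverted, and this is a fatal gap. The truncated second-moment computation of \cite{wu2020testing} shows that the planted law $\calP_{A,B}$ is close to the independent null $\calQ_{A,B}$, i.e.\ $D(\calP_{A,B}\|\calQ_{A,B})$ is small; since $I(\pi;A,B)=\binom{n}{2}I(P)-D(\calP_{A,B}\|\calQ_{A,B})$, this makes the mutual information \emph{large} (asymptotically equal to its maximum $\binom{n}{2}I(P)$), not small. In the sparse regime with $p=n^{-\alpha}$ and $nps^2=\Theta(1)$ one has $\binom{n}{2}I(P)\approx\tfrac{\alpha}{2}\,nps^2\,n\log n=\Theta(n\log n)$, so your claimed conclusion $I(\pi;A,B)=o(n\log n)$ is false, and the Fano/rate-distortion step built on it collapses: with $I=\Theta(n\log n)$, Fano can at best rule out overlap above a constant depending on $\alpha$, never all constants $\delta>0$ (this is exactly the looseness of \cite{hall2020partial} that the paper points out). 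The paper's actual converse uses the second moment to prove a \emph{matching lower bound} on $I(\pi;A,B)$ (Proposition~\ref{prop:MI}, with $\zeta_n=O(\log n)$ here), feeds it into the area theorem (Proposition~\ref{prop:I_MMSE_ER}) to conclude that $\mmse_\theta(A^\pi)$ at $\theta=(1-\epsilon)s$ is essentially the trivial error, and then needs a separate reduction (Proposition~\ref{prop:graph_to_permutation}, via the interpolated estimator $\alpha_0 A^{\hat\pi}+(1-\alpha_0)\expect{A}$ and a Hanson-Wright/Bernstein correlation bound uniform over permutations) to convert a near-trivial MMSE into $\expect{\overlap(\hat\pi,\pi)}=o(1)$; your proposal contains no analogue of this last step, and it is also where the hypothesis $np=\omega(\log^2 n)$ is used (to make $nq=nps=\omega(\log n)$ so the error term $(\log n/(nq))^{1/4}$ vanishes), not for concentration of the component sizes of $G_1\wedge G_2$.

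The positive part is closer in spirit but misses the device that makes the sparse constants attainable. If you apply a per-edge Chernoff bound to the difference $\langle A^{\pi'}-A^{\pi},B\rangle$ (which is what ``negative drift $ps^2(1-p)$ per moved edge'' suggests), the optimal exponent per moved edge is $\asymp(\sqrt{p_{00}p_{11}}-\sqrt{p_{01}p_{10}})^2\asymp ps^2$, and beating the $n^k$ union bound then requires $nps^2\gtrsim\log n$ -- hopeless when $nps^2=\Theta(1)$. The paper's Lemma~\ref{lmm:ER_fixing_k} splits the difference into $X$ (the $\pi'$-correlation term) and $Y$ (the true-correlation term over moved edges): $Y$ depends on $\pi'$ only through its fixed-point set, so its lower-tail deviation is union-bounded over only $\binom{n}{k}$ sets via Bernstein, while the Chernoff bound with the orbit-MGF estimate $M_\ell\le M_2^{\ell/2}$ is applied to $X$ alone against the threshold $\approx\expect{Y}$, where the per-edge exponent is $\asymp\tfrac12 ps^2\log\tfrac1{ps^2}=\Theta(ps^2\log n)$ and can beat $k\log n$ at $nps^2=\Theta(1)$. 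Relatedly, your explanation of the $\max\{\cdot,2\}$ in \prettyref{eq:partial-sparse-positive} via short cycles is off: for partial recovery only permutations with $k\ge\delta n$ non-fixed points need to be excluded, and the second branch ($nps^2\ge 4+2\epsilon$ essentially) arises from the very sparse case $p<n^{-1/2}$ (the paper's Case 1(c)), where the achievable exponent $\tfrac12 ps^2\log\tfrac1{ps^2}$ is capped with $\log\tfrac1{ps^2}\asymp\tfrac12\log n$, not from a separate treatment of $O(1)$-length cycles.
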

    \prettyref{thm:ER_sparse} implies that for sparse \ER graphs with $p=n^{-\alpha}$ for a constant $\alpha \in (0,1)$, 
    the information-theoretic thresholds  for partial recovery is at $np s^2 \asymp 1$, 
    which is much lower than the  almost exact recovery threshold $nps^2=\omega(1)$ as established in~\cite{cullina2019partial}. Hence, interestingly the all-or-nothing phenomenon no longer holds for sparse \ER graphs. 
    Note that the conditions \prettyref{eq:partial-sparse-positive} and \prettyref{eq:2lowerbound_sparse_strong} differ by a constant factor. 
    Determining the sharp threshold for partial recovery in the sparse  regime remains an open question.

        Finally, we address the exact recovery threshold in the \ER graph model. 
	For ease of notation, we consider a general correlated \ER graph model specified by the joint distribution $P=(p_{ab}: a,b\in\{0,1\})$, so that 
    $
     \prob{A_{\pi(i)\pi(j)}=a, B_{ij}=b}=p_{ab}  \text{ for } a, b \in \{0,1\}. 
    $
     In this general \ER model, $\piML$ is again given by the maximization problem~\prettyref{eq:ML} if $p_{11}p_{00} > p_{01}p_{10}$ (positive correlation) and changes to minimization if $p_{11}p_{00} < p_{01}p_{10}$ (negative correlation). 
        The subsampling model is a special case with positive correlation, where 
     \begin{align}
      \label{eq:general-subsampling}
     p_{11}=ps^2, \quad p_{10}=p_{01}=ps(1-s), \quad p_{00}=1-2ps+ps^2.
     \end{align}
		\begin{theorem}[\ER graphs, exact recovery] \label{thm:exact_ER}
	Under the  subsampling model~\prettyref{eq:general-subsampling}, if 
	\begin{align}
	n \left( \sqrt{p_{00} p_{11}}-\sqrt{p_{01} p_{10}}\right)^2 \ge \left(1+ \epsilon\right) \log n ,
	\label{eq:exact_positive_er}
	\end{align}
	then 
		$
		\prob{ \overlap \left( \hat{\pi}_{\rm ML}, \pi \right) =1 } = 1-o(1).
		$

	Conversely, if 
	\begin{align}
	n \left( \sqrt{p_{00} p_{11}}-\sqrt{p_{01} p_{10}}\right)^2 \le \left(1- \epsilon\right) \log n,
	\label{eq:exact_negative_er}
	\end{align}
		then for any estimator $\hat{\pi}$, 
        $
		\prob{ \overlap \left( \hat{\pi}, \pi \right) =1 } = o(1).
		$
	\end{theorem}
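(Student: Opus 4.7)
The plan is to analyze $\piML$, which in the positive-correlation regime of~\prettyref{eq:general-subsampling} maximizes $L(\pi') := \sum_{\{i,j\}} \log P(A_{\pi'(i)\pi'(j)}, B_{ij})$, via a Chernoff bound on $L(\pi')-L(\pi)$ decomposed along the cycles of $\sigma := \pi^{-1}\pi'$. Define the $2\times 2$ symmetric matrix $K$ with entries $K_{b,b'} := \sum_a \sqrt{p_{ab} p_{ab'}}$; its eigenvalues satisfy $\mu_1+\mu_2=1$ and $\mu_1\mu_2 = I := (\sqrt{p_{00}p_{11}} - \sqrt{p_{01}p_{10}})^2$, so $\Tr(K^\ell)=\mu_1^\ell + \mu_2^\ell = 1-\ell I + O(I^2)$ for small $I$. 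The key calculation is that for any orbit of $\sigma$ of length $\ell$ on cycle vertices $v_1,\ldots,v_\ell$ and any fixed vertex $m$ outside the orbit, the $\ell$ cross edges $\{v_i,m\}_{i=1}^\ell$ form an independent ``cycle block'' whose moment generating function at the half-tilt $\lambda=1/2$ equals exactly $\Tr(K^\ell)$.

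For the positive direction I would apply a union bound over $\pi' \neq \pi$ grouped by the cycle type of $\sigma$. If $\sigma$ has $k\geq 2$ non-fixed points arranged in cycles of lengths $\ell_1,\dots,\ell_r$, then independence across the $n-k$ fixed vertices and across the cycles of $\sigma$ gives $\prob{L(\pi')\geq L(\pi)} \leq \prod_i [\Tr(K^{\ell_i})]^{n-k} \leq (1-kI+O(I^2))^{n-k}$ (the subdominant $O(k^2)$ within-cycle edges are handled separately), while the number of such $\pi'$ is at most $n^k$. Summing gives a total contribution bounded by $\sum_{k\geq 2}n^k e^{-kI(n-k)(1-o(1))} \leq \sum_{k\geq 2} n^{-k\epsilon+o(k)} = o(1)$ under~\prettyref{eq:exact_positive_er}, forcing $\piML=\pi$ with high probability (the dominant term being transpositions, $k=2$).

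For the converse, by MAP optimality it suffices to exhibit, with probability $1-o(1)$, some $\pi'\neq \pi$ with $L(\pi')\geq L(\pi)$. I would fix a constant $k=k(\epsilon)$ and let $N_k$ count the permutations $\pi'$ for which $\sigma$ is a single $k$-cycle and $L(\pi')\geq L(\pi)$. There are $\Theta(n^k/k)$ such candidates, and Bahadur--Rao sharp asymptotics upgrade the Chernoff upper bound to the matching lower bound $\prob{L(\pi')\geq L(\pi)} \asymp (1-kI)^{n-k}/\sqrt{n}$. Under~\prettyref{eq:exact_negative_er}, this yields $\E[N_k] = \Theta(n^{k\epsilon-1/2}/k)$, which diverges once $k>1/(2\epsilon)$. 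A second-moment argument, with the correlations between pairs of $k$-cycles controlled via a refined joint Chernoff bound, should give $\Var(N_k)=o(\E[N_k]^2)$, so Paley--Zygmund yields $\prob{N_k\geq 1}\to 1$ and MAP (and hence any estimator) fails.

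The hard part will be the sharp Bahadur--Rao asymptotic on the converse side: a crude Chernoff bound only matches the threshold up to a $\sqrt{\log n}$ factor, which is what forces us to consider $k$-cycles with $k>1/(2\epsilon)$ rather than transpositions alone, and then to execute the second-moment computation with tight control over correlations between $k$-cycles that share vertices. A secondary technical point is handling the $O(k^2)$ within-cycle edges, which contribute only lower-order corrections to the Chernoff exponent for $k=O(1)$ but must still be tracked in the bookkeeping.
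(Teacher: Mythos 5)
Your positive direction has a genuine gap at permutations with $\Theta(n)$ non-fixed points. The union bound you propose, roughly $\sum_{k\ge 2} n^k \exp\bigl(-k I (n-k)(1-o(1))\bigr)$ with $I=(\sqrt{p_{00}p_{11}}-\sqrt{p_{01}p_{10}})^2$, silently treats $n-k$ as $n(1-o(1))$, which is only valid for $k=o(n)$. When $k\asymp n$ the exponent is $kI(n-k)\approx (1+\epsilon)\,k\,\frac{n-k}{n}\log n$, which is smaller than the entropy term $k\log n$ as soon as $k\gtrsim \frac{\epsilon}{1+\epsilon}n$; even if you credit all $\binom{n}{2}-\binom{n-k}{2}$ perturbed edges (not just the cross edges to fixed points), the bound $n^k L_2^{m/2}$ with $m\approx kn-k^2/2$ still fails near $k=n$ unless the condition is strengthened to $(2+\epsilon)\log n$. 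This is exactly why the paper proves \prettyref{lmm:small_k_prob} only for $k\le \frac{\epsilon}{16}n$ and handles large $k$ by a different mechanism: it centers the adjacency vectors, bounds $\overline X$ and $\overline Y$ separately (with the union bound for the $Y$-part taken over fixed-point sets $F$ rather than over permutations), and then verifies the nontrivial implication that \prettyref{eq:exact_positive_er} forces the almost-exact recovery conditions \prettyref{eq:recovery_positive_cond_2} or \prettyref{eq:recovery_positive_cond_1} (using, e.g., $\log\frac1p-1+p\ge 2(1-\sqrt p)^2$). Your proposal contains no substitute for this large-$k$ analysis, so as written it only proves exact recovery under a condition that is a factor of $2$ too strong.

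On the converse your plan is workable in outline but your quantitative reasoning is off, and it leads you to an unnecessarily heavy construction. The sharp-prefactor loss in the lower tail estimate is not $1/\sqrt n$: the relevant CLT scale is $\sqrt{na}\asymp\sqrt{\log n}$ (with $a=2p_{00}p_{11}$), so the loss is only polylogarithmic (the paper's \prettyref{lmm:inverse_Chernoff}, proved by a data-processing/tilting argument rather than Bahadur--Rao, loses only $n^{\delta/2}$ for arbitrarily small $\delta$). Consequently transpositions already give an expected count $n^{2\epsilon-o(1)}\to\infty$, and there is no need for $k$-cycles with $k>1/(2\epsilon)$; the second-moment computation you would then face for long overlapping cycles is substantially messier than the paper's, which works with transpositions, splits off a small vertex set $T$ to decouple the bulk statistic $X_{ij}$ from the remainder $Y_{ij}$, and also treats separately the easy regime $np_{00}p_{11}\le(1-\epsilon/2)\log n$ where the all-zero event already has probability $n^{-2+\epsilon-o(1)}$ (a case your uniform large-deviation treatment does not cover, since the tilting/Gaussian-type lower bound needs $an=\omega(1)$).
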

If $p$ is bounded away from $1$, \prettyref{thm:exact_ER} implies that the exact recovery threshold is given by  $\lim_{n\diverge} \frac{n ps^2 \left( 1-\sqrt{p}  \right)^2}{\log n}=1$. Since $\log \frac{1}{p} -1 + p \ge 2 (1-\sqrt{p})^2$, with equality if and only if $p=1$, the threshold of exact recovery is strictly higher than that of almost exact recovery in the \ER graph model, unlike the Gaussian model. If $p = 1-o(1)$,  \prettyref{thm:exact_ER} implies that the exact recovery threshold is given by  $\lim_{n\diverge} \frac{n \rho^2}{\log n}=4$, where $\rho \triangleq \frac{s(1-p)}{1-ps}$ denotes the correlation parameter between $A_{\pi(i)\pi(j)}$ and $B_{ij}$ for any $i<j$ under the latent permutation $\pi$.

\subsection{Comparisons to Prior work}
    \label{sec:prior}

\paragraph*{Exact recovery}
    The information-theoretic thresholds for exact recovery have been determined for the Gaussian model and the general \ER graph model in certain regimes.
    In particular, for the Gaussian model, it is shown in~\cite{ganassali2020sharp} that 
    if $n\rho^2 \ge (4+\epsilon) \log n$ for any constant $\epsilon>0$, then the MLE achieves exact recovery;
    if instead
    $n\rho^2 \le (4-\epsilon) \log n$, then exact recovery is impossible.
    \prettyref{thm:Gaussian_negative} significantly strengthens this negative result by showing that under the same condition even partial recovery is impossible.

 Analogously, for \ER random graphs,
 it is shown in~\cite{cullina2016improved,cullina2017exact} that the MLE achieve exact recovery when $nps^2 =\log n +\omega(1)$ under the additional restriction that $p=O(\log^{-3}(n))$.\footnote{In fact, \cite{cullina2016improved,cullina2017exact} studied exact recovery condition in a more general correlated \ER model where $\prob{A_{\pi(i)\pi(j)}=a, B_{ij}=b}=p_{ab}$ for $a, b \in \{0,1\}$, which will also be the setting in \prettyref{sec:exact}.}
  Conversely,  exact recovery is shown  in~\cite{cullina2016improved} to be 
 information-theoretically impossible provided that 
  $nps^2 \le \log n -\omega(1)$, based on the fact the intersection graph $G_1 \wedge G_2 \sim \calG(n,ps^2)$ has many isolated nodes with high probability. 
 These two results together imply
  that when $p=O(\log^{-3}(n))$, the exact recovery threshold is
  given by $\lim \frac{nps^2}{\log n}=1$, coinciding with the connectivity threshold of 
  $G_1\wedge G_2$. In comparison, \prettyref{thm:exact_ER} implies that if $p$ is bounded away from $1$, the precise exact recovery threshold is instead given by  $\lim \frac{n ps^2 \left( 1-\sqrt{p}  \right)^2}{\log n}=1$, strictly higher than the connectivity threshold.
  In particular,
 deriving the tight condition~\prettyref{eq:exact_negative_er} requires more than eliminating isolated nodes.  
 In fact, our results show that when $p$ is bounded away from $1$ and $\log n< nps^2 < \frac{\log n}{(1-\sqrt{p})^2}$, exact recovery still fails even when the intersection graph is asymmetric (no non-trivial automorphism) with high probability \cite{wright1971graphs}. 
 See the discussions in~\prettyref{sec:exact_proof_techniques} for more details.

\paragraph*{Partial and almost exact recovery}  
    Compared to exact recovery, the understanding of partial and almost exact recovery is less precise.
    It is shown in~\cite{cullina2019partial} that in the sparse regime 
    $p= n^{-\Omega(1)}$, almost exact recovery is information-theoretically possible if 
    and only if $nps^2=\omega(1)$. 
    The more recent work~\cite{hall2020partial} further investigates partial recovery. It is shown that if 
    $nps^2 \ge C(\delta) \max\left\{ 1, \frac{\log n}{\log (1/p)} \right\}$,
    then there exists an exponential-time estimator $\hat{\pi}$ that achieves 
    $\overlap\left(\hat{\pi},\pi \right) \ge \delta$ with high probability,
    where $C(\delta)$ is some large constant that tends to $\infty$ as $\delta \to 1$; 
    conversely, if $\frac{I(P)}{\delta} = o\left(\frac{\log \left(n\right)}{n}\right)$ with $I(P)$ given in \prettyref{eq:IP-ER}, then no estimator can achieve 
    $\overlap\left(\hat{\pi},\pi \right) \ge \delta$ with positive probability.
    These conditions do not match in general and are much looser than the results in Theorems~\ref{thm:ER_dense} and~\ref{thm:ER_sparse}.

    \subsection{Proof Techniques}

    
    We start by introducing some preliminary definitions associated with
    permutations (cf.~\cite[Section 3.1]{wu2020testing} for more details and examples).
    
    \subsubsection{Node permutation, edge permutation, and cycle decomposition}
    \label{sec:orbit}
    
    Let $\calS_n$ denote the set of permutations on the node set $[n]$. 
    Each $\sigma\in \calS_n$ induces a permutation $\sigmae$ on the edge set $\binom{[n]}{2}$ of unordered pairs, according to
    \begin{equation}
        \sigmae((i,j)) \triangleq (\sigma(i),\sigma(j)).
        \label{eq:sigmae}
    \end{equation}
    We shall refer to $\sigma$ and $\sigmae$ as a node permutation and edge permutation. 
    Each permutation can be decomposed as disjoint cycles known as orbits.  Orbits of $\sigma$ (resp.~$\sigmae$) are referred as \emph{node orbits} (resp.~edge orbits).
    Let $n_k$ (resp.~$N_k$) denote the number of $k$-node (resp.~$k$-edge) orbits in $\sigma$ (resp.~$\sigmae$). 
    The cycle structure of $\sigmae$ is determined by that of $\sigma$. For example, we have
    \begin{align}
    N_1 = \binom{n_1}{2}+ n_2, \label{eq:fixed_edge_node}
    \end{align}
     because an edge $(i,j)$ is a fixed point of $\sigmae$ if and only if 
     either both $i$ and $j$ are fixed points of $\sigma$ or $(i,j)$ forms a $2$-node orbit of $\sigma$.
     Let  $F$ be the set of fixed points of $\sigma$ with $|F|=n_1$.     
     Denote $\calO_1=\binom{F}{2} \subset \calO$ as the subset of fixed points of edge permutation $\sigmae$,  where $\calO$ denotes the collection of all edge orbits of $\sigmae$. 

    \subsubsection{Negative results on partial recovery}
    \label{sec:technique-negative-partial}
    Let $\calP$ denote the joint distribution of $A$ and $B$ under the correlated model. 
    To prove our negative results, we introduce an auxiliary null model $\calQ$,
    under which $A$ and $B$ are independent with the same marginal as $\calP$. In other words, under $\calQ$, $(A_{ij}, B_{ij})$ are 
    i.i.d.\ pairs of independent random variables with a joint distribution $Q$ equal to the product of the marginals of $P$.

    As the first step, we leverage the previous truncated second moment computation in~\cite{wu2020testing} to conclude that
    the KL-divergence $\calD(\calP\|\calQ)$ is negligible under the desired conditions. 
    By expressing the mutual information as $I(\pi; A, B)=\binom{n}{2} D(P\|Q) - D(\calP\|\calQ)    $ where $D(P\|Q)=I(P)$, 
    this readily implies that $I(\pi;A, B) = \binom{n}{2} I(P)(1+o(1))$.
    Next, we relate the mutual information $I(\pi;A,B)$ to the integral of 
    the minimum mean-squared error (MMSE) of $A^\pi$, the weighted adjacency vector relabeled according to the ground truth.
    For the Gaussian model, this directly follows from the celebrated I-MMSE formula \cite{GuoShamaiVerdu05}. 
    For correlated \ER graphs, we introduce an appropriate interpolating model and obtain an analogous but more involved ``area theorem'', following \cite{measson2009generalized,deshpande2015asymptotic}. 
    These two steps together imply that the MMSE of $A^\pi$ given the observation $(A,B)$ is 
    asymptotically equal to the estimation error of the trivial estimator $\expect{A^\pi}$. 
    Finally, we connect the MMSE of $A^\pi$ to the Hamming loss of reconstructing $\pi$, concluding the impossibility of the partial recovery.
    
    Note that by the non-negativity of $D(\calP\|\calQ)$, we arrive at the simple upper bound \prettyref{eq:MI-simple}, that is,  $I(\pi;A, B) \le \binom{n}{2} I(P)$. 
    Interestingly, our proof relies on establishing an asymptotically matching lower bound to the mutual information $I(\pi;A, B) $.
    This significantly deviates from the existing results in~\cite{hall2020partial} based on Fano's inequality:
    $\prob{\overlap(\hat{\pi},\pi) \le \delta} \ge  1- \frac{I(\pi;A, B)+1}{\log (n!/m)}$
    with $m=|\{\pi: \overlap(\pi, \pi) \ge \delta\}|$, followed by applying the simple  bound \prettyref{eq:MI-simple}.

    \subsubsection{Positive results on partial and almost exact recovery}

    Our positive results follow from a large deviation analysis of the maximum likelihood estimator~\prettyref{eq:ML}.  A crucial observation is that the difference between the objective function in~\prettyref{eq:ML} evaluated at a given permutation $\pi'$ and that at the ground truth $\pi$ can be 
    decomposed 
    across the edge orbits of $\sigma\triangleq  \pi^{-1}\circ \hpi $ as 
    $$
     \iprod{A^{\pi'} - A^{\pi}}{B} = \sum_{O \in \calO \setminus \calO_1 } X_O - 
     \sum_{O \in \calO \setminus \calO_1} Y_O \; \triangleq X - Y,
    $$
    where  $F$ is the fixed point of $\sigma$,
    $\calO_1=\binom{F}{2} \subset \calO$ is a  subset of fixed points of the edge permutation $\sigmae$,
    $X_O\triangleq \sum_{(i,j)\in O}  A_{\hpi(i)\hpi(j)} B_{ij}$, and 
    $Y_O \triangleq \sum_{(i,j)\in O}  A_{\pi(i)\pi(j)} B_{ij}$, 
    are independent across  edge orbits $O$.
    Crucially, $Y$  depends on $\pi'$
    only through its fixed point set $F$, which has substantially fewer
    choices than $\pi'$ itself when $n-|F| \asymp n$.
    Therefore, for the purpose of applying the union bound it is beneficial to separately control $ X$ and $Y$. 
    Indeed, we show that $Y$ is highly concentrated on its mean. 
Hence, 
    it remains to analyze the large-deviation event of $ X$ exceeding $\expect{Y}$, which is accomplished by a careful computation of 
    the moment generation function (MGF)
    $M_{|O|} \triangleq \expect{\exp\left( t X_O\right)}$ and proving that \begin{align}
    M_{|O|}\le M_2^{|O|/2},     \text{ for }    |O|\ge 2.  
    \label{eq:mgf_bound}
    \end{align}
    Intuitively, it means that the contribution of longer edge orbits can be effectively bounded by that of the $2$-edge orbits.
    Capitalizing on this key finding and applying the Chernoff bound together with a union bound 
    over $\pi'$  yields the tight condition when $q=o(1)$.
    When $q=\Theta(1)$, it turns out that the correlation between $X$ and $Y$ can no longer be ignored so that separately controlling $X $ and $Y$ leads to suboptimal results. To remedy this issue, we need to center the adjacency vectors $A$ and $B$. More precisely, 
    define 
    $\overline{X}_O\triangleq \sum_{(i,j)\in O}  (A_{\hpi(i)\hpi(j)} - q) (B_{ij}-q )$, 
 $\overline{Y}_O \triangleq \sum_{(i,j)\in O}  (A_{\pi(i)\pi(j)} - q) (B_{ij}-q)$, $\overline{X}\triangleq \sum_{O \in \calO \setminus \calO_1 } \overline{X}_O$, and 
 $\overline{Y} \triangleq \sum_{O \in \calO \setminus \calO_1 }  \overline{Y}_O$. 
 By definition, $X-Y=\overline{X}-\overline{Y}$. Crucially, it can be verified that $\overline{X}_O$ and $\overline{Y}_O$ 
 (and hence $\overline{X}$ and $\overline{Y}$)
 are uncorrelated after the centering. Thus we can apply our aforementioned techniques to separately bound $\overline{X}$ and $\overline{Y}$, which yield the sharp conditions of recovery.

    We remark that the partial recovery results in~\cite{hall2020partial} are obtained  by analyzing an estimator slightly different from the MLE and the same MGF bound~\prettyref{eq:mgf_bound} is used. 
    However, there are two major differences that led to the looseness of their results. 
    First, their analysis does not separately bound $X$ and $Y$. Second, the tilting parameter in the Chernoff bound is suboptimal.

    \subsubsection{Exact recovery}\label{sec:exact_proof_techniques}
    
    For  exact recovery, we need to further consider $\pi'$ that is close to $\pi$, \ie, $n-|F|=o(n)$. 
    In this regime, the number of choices of $F$ is comparable to that of $\pi'$. Hence, instead of separately bounding $X$ and $Y$,  it is  more favorable to directly applying the Chernoff bound to the difference $X-Y$.  Crucially, the moment generation function $\expect{\exp\left( t (X_O-Y_O)\right)}$ continues to satisfy the relation~\prettyref{eq:mgf_bound} and the bottleneck for exact recovery happens at $|F|=n-2$, where $\pi'$ differs from $\pi$ from a $2$-cycle (transposition).

    Prompted by this observation, we prove a matching necessary condition of exact recovery 
    by considering all possible permutations $\sigma \triangleq  \pi^{-1}  \circ \hpi$ that consists of $n-2$
    fixed points and a 2-node cycle $(i,j)$ (transposition), in which case,
    \begin{align}
    \left \langle  A^{\pi'}- A^\pi, B\right\rangle = 
    - \sum_{k \in [n]\backslash \{ij\}} 
    \left( A^\pi_{ik} - A^\pi_{jk}\right)\left(B_{ik}-B_{jk}\right).\label{eq:obj_diff_2}
    \end{align}
    There remains two key challenges to conclude the existence of many choices of $(i,j)$ for which 
    $\langle A^{\pi'}, B\rangle\geq \langle A^\pi, B\rangle$. First, to derive a tight 
    impossibility condition, we need to obtain a tight large-deviation lower estimate for this event. Second, the RHS of~\prettyref{eq:obj_diff_2} for different pairs
    $(i,j)$ are correlated. 
    This dependency is addressed by restricting the choices of $(i,j)$ and applying a second moment computation.

    Note that the impossibility proof of exact recovery for the Gaussian model in \cite{ganassali2020sharp} also considers the permutations that consist of a single transposition. The difference is that the large-deviation lower estimate simply follows from Gaussian tail probability
    and the correlations among different pairs $(i,j)$ is bounded by a second-moment
    calculation using 
    densities of correlated Gaussians.
    

        \section{Impossibility of Partial Recovery}
    \label{sec:impossibility}
    

    To start, we characterize the asymptotic value of the mutual information $I(A, B; \pi)$ -- a key quantity that measures
    the amount of information about $\pi$ provided by the observation $(A,B)$. By definition, 
    $$
    I(A,B;\pi) \triangleq
    \expect{ D \left( \calP_{A,B \mid \pi} \| \calP_{A,B} \right)}
    =\expect{ D \left( \calP_{A,B \mid \pi} \| \calQ_{A,B} \right)} - D\left( \calP_{A,B}||\calQ_{AB} \right)
    $$
    for any joint distribution $\calQ_{A,B}$ of $(A,B)$ such that $D\left( \calP_{A,B}||\calQ_{AB} \right)<\infty$.
    Note that $\calP_{A,B \mid \pi}$ factorizes into a product distribution $\prod_{i<j} \calP_{A_{\pi(i)\pi(j)}, B_{ij}}=P^{\otimes \binom{n}{2}}$, where $P$ is the joint distribution of $(A_{\pi(i)\pi(j)}, B_{ij})$. 
    Thus, to exploit the tensorization property of the KL-divergence,
    we choose $\calQ_{A,B}$ to be a product distribution under which $A$ and $B$ are independent
    and $(A_{ij}, B_{ij})$ are i.i.d.\ pairs of independent random variables with a joint distribution
    $Q$ with the same marginals as $P$. (We shall refer to this $\calQ_{A,B}$ as the null model.)
    In particular, for the Gaussian   (resp.~\ER) model, $Q$ is the joint distribution
    of two independent standard normal (resp.~$\Bern(q)$) random variables.
    Under this choice, we have $D \left( \calP_{A,B \mid \pi} \| \calQ_{A,B} \right)=\binom{n}{2}D(P\|Q)=\binom{n}{2} I(P)$
    and hence
    $$
    I(A,B;\pi) 
    =\binom{n}{2} I(P)- D\left( \calP_{A,B}||\calQ_{AB} \right).
    $$
    By the non-negativity of the KL divergence, we have     $I(A, B; \pi) \le \binom{n}{2} I(P)$.  This bound turns out to be tight, as made precise by the following proposition. 
    \begin{proposition}\label{prop:MI}
        It holds that 
        $$
        I(A,B;\pi)  =\binom{n}{2} I(P) - \zeta_n,
        $$
        where
        \begin{itemize}
            \item $\zeta_n=o(1)$ in  the Gaussian   model
            with $\rho^2 \le \frac{(4-\epsilon)\log n}{n}$;
            \item $\zeta_n=o(1)$  in  the dense \ER graphs with $p=n^{-o(1)}$
            and $nps^2 \left( \log (1/p)-1+p \right) \le (2-\epsilon) \log(n)$;
            \item 
            $\zeta_n=O(\log n)$ in the sparse \ER graphs with
            $p=n^{-\Omega(1)}$ and $np=\omega(1)$ and $nps^2 \le 1-\epsilon$;
        \end{itemize}
        for some arbitrarily small but fixed constant $\epsilon>0$.
    \end{proposition}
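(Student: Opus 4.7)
\textbf{Proof plan for Proposition~\ref{prop:MI}.} The plan is to identify $\zeta_n$ with $D(\calP \| \calQ)$ (which is essentially done in the preceding paragraph of the excerpt via the chain rule for relative entropy) and then bound the KL by a chi-squared divergence. By Jensen's inequality applied to the concave function $\log$, writing $L \triangleq d\calP/d\calQ$,
\[
D(\calP \| \calQ) = \Expect_\calP\qth{\log L} \le \log\Expect_\calP[L] = \log\Expect_\calQ[L^2] = \log\pth{1 + \chi^2(\calP\|\calQ)},
\]
so it suffices to control a (possibly truncated) second moment of the likelihood ratio.

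Next I would expand $L = \Expect_\pi[L_\pi]$ with $L_\pi = \prod_{i<j}\tfrac{dP}{dQ}(A_{\pi(i)\pi(j)}, B_{ij})$, giving $\Expect_\calQ[L^2] = \Expect_{\pi,\hpi}\Expect_\calQ[L_\pi L_{\hpi}]$ with $\pi,\hpi$ independent uniform permutations. Since $\calQ$ is a product measure, the inner expectation factorizes across edges and, by the standard grouping argument, depends on $(\pi,\hpi)$ only through the cycle decomposition of the edge permutation $\sigmae$ induced by $\sigma = \pi^{-1}\hpi$. Specifically, it takes the form $\prod_{k\ge 2}(1+\lambda_k)^{N_k}$ for model-specific coefficients $\lambda_k$ (capturing the contribution of a single $k$-edge orbit), reducing the chi-squared to a functional of the random cycle counts $\{N_k\}$ under a uniform permutation. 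This is exactly the object analyzed by the truncated second-moment calculation of \cite{wu2020testing}.

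I would then invoke the truncated second-moment estimates of \cite{wu2020testing}: on a high-probability event $\calE$ defined by concentration of $\{N_k\}$ around their typical values, the truncated quantity $\Expect_\calQ\qth{L^2 \Indc_\calE}-1$ is $o(1)$ in the dense Gaussian regime $\rho^2 \le (4-\epsilon)\log n / n$ and in the dense \ER regime $nps^2(\log(1/p) - 1 + p) \le (2-\epsilon)\log n$; in the sparse \ER regime $nps^2 \le 1-\epsilon$ with $np = \omega(\log^2 n)$, the same truncated second moment grows at most polynomially in $n$. Plugging the appropriate parameters of each regime into the formulas for $\lambda_k$ and performing the generating-function-type sum that appears in \cite{wu2020testing} should give these bounds as a direct transcription.

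The last step is to convert the truncated second-moment bound back to a bound on the unrestricted KL, by splitting
\[
D(\calP \| \calQ) = \Expect_\calP[\log L \cdot \Indc_\calE] + \Expect_\calP[\log L \cdot \Indc_{\calE^c}].
\]
Jensen applied to the first term yields $\log\Expect_\calQ[L^2 \Indc_\calE]$, which is controlled by the truncated chi-squared; the second term is handled by Cauchy--Schwarz together with $\calP(\calE^c) = o(1)$ and a crude deterministic (or second-moment) upper bound on $\log L$. The main obstacle I anticipate is making this tail correction negligible uniformly across the three regimes: the dense cases demand $o(1)$ and therefore a rather sharp concentration radius in the definition of $\calE$, whereas the sparse case has an $O(\log n)$ budget but a far larger truncated chi-squared, so the radius defining $\calE$ must be calibrated to balance the residual against the main term. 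The remaining regime-by-regime matching of constants to the thresholds $(4-\epsilon)$, $(2-\epsilon)$, and $1-\epsilon$ in the statement should be routine once the truncated second-moment bounds from \cite{wu2020testing} are in hand.
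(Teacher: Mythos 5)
Your identification of $\zeta_n$ with $D(\calP_{A,B}\|\calQ_{A,B})$ and your plan to import the truncated second moment of \cite{wu2020testing} are exactly the paper's starting point, but the step where you convert the truncated $\chi^2$ bound into a bound on the full KL has a genuine gap, and it is precisely the step where the paper's Lemma~\ref{lmm:trunc2MI} does its work. The truncation event $\calE$ supplied by \cite{wu2020testing} is an event in $(A,B,\pi)$-space (probabilistically independent of $\pi$, but not $(A,B)$-measurable): the truncation is applied to each $L_\pi$ \emph{inside} the mixture, and what is controlled is $\chi^2(\calP_{A,B|\calE}\|\calQ_{A,B})$, i.e.\ the second moment of $\tilde L=\frac{1}{\calP(\calE)}\Expect_\pi\qth{L_\pi\indc{(A,B,\pi)\in\calE}}$. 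Your first term $\Expect_\calP[\log L\cdot\indc{\calE}]$, after conditional Jensen, produces $\calP(\calE)\log\pth{\Expect_\calP[L\,|\,\calE]}$, and $\Expect_\calP[L\indc{\calE}]$ is a cross moment of the \emph{untruncated} mixture $L$ against $\tilde L$ under $\calQ$; it is not $\Expect_\calQ[L^2\indc\calE]$ (that identity would need $\calE$ to depend on $(A,B)$ only), and it is not the quantity \cite{wu2020testing} bounds. Estimating it by Cauchy--Schwarz reintroduces the untruncated second moment $\Expect_\calQ[L^2]$, which is exactly what blows up in these regimes. The paper sidesteps this by decomposing the \emph{measure}, $\calP_{A,B}=(1-\delta)\calP_{A,B|\calE}+\delta\calP_{A,B|\calE^c}$, and using convexity of KL, so that the truncated $\chi^2$ of \cite{wu2020testing} enters verbatim through $D(\calP_{A,B|\calE}\|\calQ_{A,B})\le\log(1+\chi^2(\calP_{A,B|\calE}\|\calQ_{A,B}))$.

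Your treatment of the bad-event term also fails quantitatively. A ``crude deterministic bound on $\log L$'' does not exist in the Gaussian model (the likelihood ratio is unbounded), and in the sparse \ER regime the available deterministic bound is of order $n^2\log(1/p)$ while $\calP(\calE^c)$ is only $O(1/n)$, giving a contribution of order $n\log n$, far above the $O(\log n)$ budget; a second-moment bound on $\log L$ for the \emph{mixture} is not readily computable either. The paper's fix is to condition on $\pi$ on the bad event (using the independence of $\calE$ and $\pi$), so that only the moments of $\log L_\pi=\log\frac{d\calP_{A,B|\pi}}{d\calQ_{A,B}}$ are needed: its mean is $\binom{n}{2}I(P)=O(n\log n)$ and its variance is $O(n\log n)$ (Gaussian) or $O(n^2ps^2\log^2(1/p))$ (\ER), and after multiplying by $\delta$ or $\sqrt{\delta}$ these give $o(1)$ in the dense regimes and $O(\log n)$ in the sparse regime. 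Unless you restructure your conversion step along these lines (or otherwise show how the cross term and the $\calE^c$ term can be controlled with only the truncated second moment in hand), the proof does not go through, particularly for the sparse \ER bullet. As a minor point, \cite{wu2020testing} gives $\chi^2(\calP_{A,B|\calE}\|\calQ_{A,B})=o(1)$ in the sparse regime under $np=\omega(1)$, not merely polynomial growth, and the proposition only assumes $np=\omega(1)$, not $np=\omega(\log^2 n)$.
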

    
    Given the tight characterization of  the mutual information in \prettyref{prop:MI}, we now relate it to the Bayes risk.
    Using the chain rule, we have
    $$
    I(A, B; \pi) = I(B;\pi \mid A) = I(B; A^\pi \mid A),
    $$
    where the second equality follows from the fact that $A\to A^\pi\to B$ forms a Markov chain.
    The intuition is that conditioned on $A$, $B$ is a noisy observation of $A^\pi$ (which is random owning to $\pi$).
    In such a situation, the mutual information can typically be related to an integral of the reconstruction error of the signal $A^\pi$.  
    To make this precise, 
    we first introduce a parametric model $\calP_\theta$ that interpolates
    between the planted model $\calP$
    and the null model $\calQ$ as $\theta$ varies. 
    We write $\Expect_\theta$ to indicate expectation taken with respect to the law $\calP_\theta$.

    For the Gaussian model, 
    let $\calP_\theta$ denote the model under which 
    $
    B= \sqrt{\theta} A^\pi + \sqrt{1-\theta} Z,
    $
    where $A, Z$ are two independent Gaussian   matrices and $\theta \in [0,1]$. 
    Then $\theta=\rho^2$ corresponds to the planted model $\calP$
    while $\theta=0$ corresponds to the null model $\calQ$.
    As $\theta$ increases from $0$ to $\rho^2$, $\calP_\theta$ interpolates between $\calQ$ and $\calP$.
    Let 
    \begin{equation}
        \mmse_\theta(\Api) \triangleq \Expect_\theta[\norm{A^\pi-\Expect_\theta[A^\pi|A,B]}^2]
        \label{eq:mmse}
    \end{equation}
    denote the minimum mean-squared error (MMSE) of estimating $A^\pi$ based on $(A,B)$ distributed according to $\calP_\theta$.  
    The following proposition follows from the celebrated I-MMSE formula~\cite{GuoShamaiVerdu05}. 
    \begin{proposition}[Gaussian   model]\label{prop:I_MMSE}
        $$
        I(A, B; \pi)= \frac{1}{2} \int_{0}^{\rho^2} \frac{\mmse_\theta(\Api)}{(1-\theta)^2}  d\theta \,.
        $$
    \end{proposition}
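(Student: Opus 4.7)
The plan is to obtain the identity by a direct application of the I-MMSE formula of Guo--Shamai--Verdú \cite{GuoShamaiVerdu05}, after two preliminary manipulations: a Markov-chain reduction and a rescaling of the Gaussian channel to canonical additive-noise form.

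First I would rewrite the mutual information in a form amenable to I-MMSE. By the chain rule together with the fact that $A$ is marginally independent of $\pi$, and using the Markov chain $\pi \to \Api \to B$ (which holds because under $\calP_\theta$ the observation $B$ depends on $\pi$ only through $\Api$), one obtains
$$
I(A,B;\pi) \;=\; I(B;\pi\mid A) \;=\; I(B;\Api\mid A).
$$
Next, to put the channel into canonical form, I would rescale $\tilde B \triangleq B/\sqrt{1-\theta}$, which under $\calP_\theta$ satisfies $\tilde B = \sqrt{\gamma}\,\Api + Z$ with $\gamma \triangleq \theta/(1-\theta)$ and $Z$ a standard Gaussian vector independent of $(A,\Api)$. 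Since $B\mapsto \tilde B$ is a deterministic bijection for each fixed $\theta$, $I(B;\Api\mid A) = I(\tilde B;\Api\mid A)$, and the MMSE of $\Api$ from $(A,\tilde B)$ at noise level $\gamma$ coincides with $\mmse_\theta(\Api)$ defined in \prettyref{eq:mmse}.

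I would then invoke the conditional I-MMSE identity, with the side information $A$ carried along as a parameter: for the Gaussian channel $\tilde B = \sqrt{\gamma}\,\Api + Z$,
$$
\frac{d}{d\gamma}\, I(\Api;\tilde B\mid A) \;=\; \frac{1}{2}\, \mmse_\theta(\Api).
$$
This is simply the classical I-MMSE formula applied to the regular conditional distribution of $\Api$ given $A$ and then averaged; the interchange of derivative and expectation is justified because $\Api$ is jointly Gaussian with $A$ and has $\theta$-independent finite second moments, so dominated convergence applies uniformly in $\gamma$ on any compact subinterval of $[0,\rho^2/(1-\rho^2)]$. Noting that at $\gamma=0$ (i.e.\ $\theta=0$) the observation $\tilde B = Z$ is independent of $(\pi,A)$ so the mutual information vanishes, I would integrate from $0$ to $\rho^2/(1-\rho^2)$ and change variables via $d\gamma = d\theta/(1-\theta)^2$ to obtain
$$
I(A,B;\pi) \;=\; \int_0^{\rho^2/(1-\rho^2)} \frac{1}{2}\,\mmse_\theta(\Api)\,d\gamma \;=\; \frac{1}{2}\int_0^{\rho^2} \frac{\mmse_\theta(\Api)}{(1-\theta)^2}\,d\theta,
$$
as claimed.

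There is no substantive obstacle here: the entire content of the proposition is the identification of the correct reparametrization $\gamma = \theta/(1-\theta)$ so that the unconventional interpolation $B=\sqrt\theta \Api + \sqrt{1-\theta} Z$ reduces to canonical AWGN form. The only point deserving care is the conditional I-MMSE step, which is standard because conditioning on $A$ does not affect the Gaussianity of the noise $Z$, nor the integrability hypotheses required in \cite{GuoShamaiVerdu05}.
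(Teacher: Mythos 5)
Your proposal is correct and follows essentially the same route as the paper: reduce to $I(B;\Api\mid A)$ via the Markov chain $A\to\Api\to B$, rescale to the canonical AWGN channel with SNR $\gamma=\theta/(1-\theta)$, apply the (conditional) I-MMSE formula of \cite{GuoShamaiVerdu05}, and integrate with the change of variables $d\gamma = d\theta/(1-\theta)^2$ using $I(0)=0$. Your added remarks on conditioning on $A$ and on the interchange of derivative and expectation are harmless elaborations of the same argument.
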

    
    The correlated \ER graph model requires more effort. Let us fix $q=ps$ and consider the following coupling $P_\theta$ 
    between two $\Bern(q)$ random variables with joint probability mass function $p_\theta$, 
    where $p_\theta(11)=q\theta$, 
    $p_\theta(01)=p_\theta(10)=q(1-\theta)$, and $p_\theta(00)=1-(2-\theta)q$, with $\theta \in [q,s]$. 
    Let $\calP_\theta$ denote the interpolated model under which $\left(A_{\pi(i)\pi(j)}, B_{ij} \right)$
    are i.i.d.\ pairs of correlated random variables with joint distribution $P_\theta$.
    As $\theta$ increases from $q$ to $s$, $\calP_\theta$ interpolates between the null model $\calQ=\calP_q$ and the planted model $\calP=\calP_s$.
    We have the following area theorem that relates $I(A, B; \pi)$ to the MMSE of $A^\pi$.
    
    \begin{proposition}[\ER random graph]\label{prop:I_MMSE_ER}
        It holds that 
        \begin{align*}
            I(A, B; \pi) \le \binom{n}{2} I(P)  + \binom{n}{2} qs^2 + \int_{q}^s \frac{\theta-q}{s(1-q)^2} 
            \left( \mmse_\theta(\Api) - \binom{n}{2} q (1-q) \right) d\theta.
        \end{align*}
    \end{proposition}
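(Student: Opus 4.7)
The plan is to prove a Bernoulli ``area theorem'' in the spirit of \cite{measson2009generalized,deshpande2015asymptotic}, relating the mutual information to an integral of posterior-based quantities along the interpolation $\theta\in[q,s]$. Writing $I_\theta\triangleq I(A,B_\theta;\pi)$ under $\calP_\theta$, we have $I_q=0$ because under the null model $A$ and $B$ are independent. By the fundamental theorem of calculus,
\begin{equation*}
    I(A,B;\pi)=\int_q^s \frac{dI_\theta}{d\theta}\,d\theta,
\end{equation*}
so it suffices to identify the integrand with a combination of per-edge mutual information and the joint MMSE $\mmse_\theta(A^\pi)$.

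Using the chain rule and the Markov structure $(A,\pi)\to A^\pi\to B_\theta$, one has $I_\theta=I(B_\theta;A^\pi\mid A)$. The key structural observation is that under $P_\theta$ the effective edgewise channel is \emph{affine in conditional mean}:
\begin{equation*}
    \Expect_\theta[B_{ij}\mid A^\pi_{ij}]=q+\beta(\theta)(A^\pi_{ij}-q),\qquad \beta(\theta)\triangleq\frac{\theta-q}{1-q}.
\end{equation*}
Differentiating $I_\theta$ edge by edge and applying the single-letter identity $\Expect_\theta[(A^\pi_{ij}-q)(B_{ij}-q)]=\beta(\theta)\,q(1-q)$, I would split the derivative into three pieces: (i) a ``signal'' term equal to $\binom{n}{2}\frac{d}{d\theta}I(P_\theta)$, which integrates to the boundary contribution $\binom{n}{2}I(P)$; (ii) a ``posterior covariance'' term that, after the natural change of variable $\beta(\theta)\beta'(\theta)/s=(\theta-q)/[s(1-q)^2]$, matches the MMSE integrand in the proposition, since $\sum_{i<j}\mathrm{Var}(A^\pi_{ij}\mid A,B)=\binom{n}{2}q(1-q)-\big(\binom{n}{2}q(1-q)-\mmse_\theta(A^\pi)\big)$; and (iii) a nonlinear residual arising from the heteroscedasticity of the Bernoulli channel (the conditional variance of $B_{ij}$ depends on $A^\pi_{ij}$).

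The residual in (iii) is non-negative and, via a Taylor expansion of the per-edge entropies $H(B_{ij}\mid A^\pi_{ij})$ together with $\theta\in[q,s]$, bounded by $O(qs^2)$ per edge, contributing the correction term $\binom{n}{2}qs^2$ in the bound. Its non-negativity is precisely what turns the equality into the inequality ``$\le$'' in the statement.

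The main technical hurdle is this last step: unlike the Gaussian setting of \prettyref{prop:I_MMSE}, where I-MMSE gives an exact identity, the Bernoulli channel forces a careful separation of the affine ``MMSE piece'' from the nonlinear remainder in $dI_\theta/d\theta$, together with a uniform bound on the remainder across $\theta\in[q,s]$. Getting the constants right — especially verifying that the surplus truly fits into a single clean $\binom{n}{2}qs^2$ term rather than something depending on $\theta$ — is where the bookkeeping is most delicate. Once that estimate is in hand, integration in $\theta$ and rearrangement yield the stated bound.
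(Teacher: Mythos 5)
Your overall frame (interpolate from $\calP_q$ to $\calP_s$, write $I(A,B;\pi)$ as an integral of a per-edge derivative, and isolate an MMSE-like term) is the same as the paper's, but the crux of your argument is asserted rather than proved, and the assertion is not on the right track. When you differentiate $I_\theta$ (equivalently $H_\theta(A^\pi\mid B,A)$) in the per-edge channel parameter, the quantity that appears is \emph{not} the full-posterior variance $\Var(A^\pi_e\mid A,B)$: the GEXIT-type computation (the paper invokes \cite[Lemma 7.1]{deshpande2015asymptotic}) necessarily produces the \emph{extrinsic} posterior $\mu_e(\cdot\mid B_{\backslash e},A)$, i.e.\ the mean $\tilde x_e=\Expect[A^\pi_e\mid B_{\backslash e},A]$, because one must separate the dependence on $B_e$ before differentiating its channel law. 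Your step (ii), which claims the derivative directly contains $\sum_e\Var(A^\pi_e\mid A,B)$ with the coefficient $\frac{\theta-q}{s(1-q)^2}$, skips exactly this point. In the paper, the additive correction $\binom{n}{2}qs^2$ is \emph{not} a ``heteroscedasticity residual'' of the Bernoulli channel at all: it comes from converting the extrinsic quantity back to the full MMSE, via the estimate $\Expect[\hat x_e^2]\le 2sq+\Expect[\tilde x_e^2]$ for $\hat x_e=\Expect[A^\pi_e\mid A,B]$, which after integrating against $\frac{\theta-q}{s(1-q)^2}$ over $[q,s]$ yields at most $\binom{n}{2}qs^2$. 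The genuine nonlinearity of the channel is handled elsewhere and differently: the factor $1/s$ in the integrand arises from lower-bounding $h''$ at a point $\xi\le s$ (since $h'(y_e)=h'(q)-\Delta_e/(\xi(1-\xi))$), not from an additive $qs^2$ term. Your plan to control the residual by Taylor-expanding $H(B_{ij}\mid A^\pi_{ij})$ cannot work, since that entropy does not involve the posterior at all, whereas the object being differentiated does.

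There is also a sign problem in your logic. You say the residual is non-negative and that ``its non-negativity is precisely what turns the equality into the inequality $\le$.'' If $dI_\theta/d\theta$ were (signal) $+$ (MMSE term) $+$ (non-negative residual), then discarding the residual would give a \emph{lower} bound on $I(A,B;\pi)$, the wrong direction; what the stated proposition needs is an \emph{upper} bound on the residual contribution (or, as in the paper, lower bounds on the variance terms $\Var_\theta(\tilde x_e)$ and on $\sum_e\Var(\tilde x_e)$ in terms of $\binom{n}{2}q(1-q)-2\binom{n}{2}sq-\mmse_\theta(\Api)$). So as written, the proposal has two gaps at its technical core: the unproved identification of the full-posterior MMSE inside the derivative (where the extrinsic/full distinction is the real difficulty and the true source of the $\binom{n}{2}qs^2$ term), and the inverted role of the residual's sign. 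To repair it you would essentially have to reproduce the paper's route: apply the GEXIT lemma to get $\termI$ in terms of $\tilde x_e$, lower-bound it by $\frac{\theta-q}{s(1-q)^2}\Var_\theta(\tilde x_e)$ using $\xi\le s$, and then bound $\Expect[\hat x_e^2]-\Expect[\tilde x_e^2]\le 2sq$ to pass to $\mmse_\theta(\Api)$.
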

    
    Finally, we  relate the estimation error of $\Api$ to that of $\pi$.
    \begin{proposition}\label{prop:graph_to_permutation}
        In both the Gaussian   and \ER graph model, if
        \begin{align}
            \mmse_{\theta}(A^\pi) \ge \expect{\norm{A}^2} (1-\xi), \label{eq:triv_mmse_cond}
        \end{align}
        for some $\xi>0$, then for any estimator $\hat{\pi}=\hat{\pi}(A,B)$, 
        \begin{align*}
            \Expect_\theta[\overlap(\hat{\pi}, \pi)] &\le O\left(\xi^{1/4}  + \left(\frac{n\log n}{\expect{\norm{A}^2}} \right)^{1/4}  \right).
        \end{align*}
    \end{proposition}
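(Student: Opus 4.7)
My plan is to use $A^{\hat\pi}$ as a feasible, suboptimal estimator of $A^\pi$: the MMSE hypothesis forces the Bayes estimator $\Phi\triangleq\Expect_\theta[A^\pi\mid A,B]$ to be only marginally better than the prior mean, so $A^{\hat\pi}$ cannot be well-correlated with $A^\pi$, and this weak edge-level correlation must translate into a small vertex-level overlap. The Pythagorean identity $\mmse_\theta(\Api)=\Expect_\theta\|A^\pi\|^2-\Expect_\theta\|\Phi\|^2$ rewrites the hypothesis as $\Expect_\theta\|\Phi\|^2\le \xi\,\Expect\|A\|^2$. Combining the tower property $\Expect_\theta\iprod{A^\pi}{A^{\hat\pi}}=\Expect_\theta\iprod{\Phi}{A^{\hat\pi}}$ with two applications of Cauchy--Schwarz and the isometry $\|A^{\hat\pi}\|=\|A\|$ yields the key estimate
\[
\Expect_\theta\iprod{A^\pi}{A^{\hat\pi}} \;\le\; \sqrt{\xi}\,\Expect\|A\|^2 .
\]
In the \ER setting, one centers to $\tA = A - q\ones$ and $\tilde\Phi = \Phi - q\ones$ before running the same argument.

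Next I reduce to a quadratic form in $A$. Setting $\tau=\hat\pi\circ\pi^{-1}$ gives $\iprod{A^\pi}{A^{\hat\pi}}=\iprod{A}{A^\tau}$, with $\tau$ having $|F_\tau|=n\,\overlap(\hat\pi,\pi)$ fixed nodes, so that $N_1(\tau^{\sfE})\ge\binom{|F_\tau|}{2}$. For each fixed $\tau$, $\iprod{A}{A^\tau}$ is a quadratic form in $A$ with mean equal to $\Var(A_e)\,N_1(\tau^{\sfE})+\pth{\binom{n}{2}-N_1(\tau^{\sfE})}(\Expect A_e)^2$. The crucial technical step is to prove the uniform concentration
\[
\sup_{\tau\in\symm_n}\bigl|\iprod{A}{A^\tau}-\Expect\iprod{A}{A^\tau}\bigr|\le\Delta \qquad\text{with probability }1-o(1),
\]
with $\Delta=O(n^{3/2}\sqrt{\log n})$ in the Gaussian case (Hanson--Wright applied to the symmetric form, whose Frobenius norm is $O(n)$ and operator norm $O(1)$, followed by a union bound over $|\symm_n|\le \eexp^{n\log n}$), and $\Delta=O(\sqrt{n^3 q\log n})$ in the \ER case (Bernstein's inequality, noting each $A_e$ participates in at most two summands so the dependency degree is bounded).

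Combining these and absorbing the low-probability complement via $\iprod{A}{A^\tau}\le\|A\|^2$, we obtain $\Var(A_e)\,\Expect[N_1(\tau^{\sfE})]\le \sqrt{\xi}\,\Expect\|A\|^2+O(\Delta)$, hence $\Expect[N_1(\tau^{\sfE})]\lesssim \binom{n}{2}\sqrt{\xi}+\Delta/\Var(A_e)$. Finally, $N_1(\tau^{\sfE})\ge |F_\tau|^2/2 - n/2$ together with Jensen's inequality $\Expect_\theta[\overlap(\hat\pi,\pi)]=\Expect[|F_\tau|/n]\le\sqrt{\Expect[|F_\tau|^2/n^2]}$ and the relation $\Expect\|A\|^2\asymp \binom{n}{2}\Var(A_e)$ (exactly in the Gaussian case; up to the factor $1-q$ in the \ER case) produces the announced bound $O\bigl(\xi^{1/4}+(n\log n/\Expect\|A\|^2)^{1/4}\bigr)$. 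The main obstacle is the uniform concentration step: the union bound over $\symm_n$ is what dictates the $(n\log n/\Expect\|A\|^2)^{1/4}$ error term, and in the \ER regime the centering must be tracked carefully to keep the Bernstein variance proxy tight; the remaining manipulations (Pythagoras, Cauchy--Schwarz, Jensen, and the elementary counting of fixed edges in terms of fixed nodes) are standard.
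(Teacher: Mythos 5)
Your proposal is correct and follows the same skeleton as the paper's proof: bound $\Expect_\theta[\alpha(\pi,\hat{\pi})]$ (the expected fraction of fixed edges of $\hat\pi\circ\pi^{-1}$) by combining (i) an upper bound on $\Expect_\theta[\iprod{\Api}{A^{\hat\pi}}]$ extracted from the MMSE hypothesis, (ii) a lower bound on $\iprod{\Api}{A^{\pi'}}$ in terms of the number of fixed edges, holding uniformly over $\pi'\in\calS_n$ via Hanson--Wright/Bernstein plus a union bound with the complement absorbed by Cauchy--Schwarz, and (iii) the counting-plus-Jensen step relating fixed edges to fixed vertices. The two places you deviate are local. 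For (i) you use the Pythagorean identity and Cauchy--Schwarz against the Bayes estimator $\Expect_\theta[\Api\mid A,B]$, whereas the paper introduces the interpolated estimator $\alpha_0 A^{\hat\pi}+(1-\alpha_0)\Expect[A]$ and expands its MSE; these are equivalent in content (Cauchy--Schwarz is exactly the optimization over the interpolation coefficient), and your version is arguably cleaner. For (ii) in the \ER case, the paper needs only a one-sided bound and gets it cheaply by discarding the nonnegative off-fixed-edge terms, reducing to a Binomial lower tail; your plan asks for two-sided concentration of the full centered quadratic form $\iprod{\tA}{\tA^\tau}$, which forces you to handle the dependence inside edge orbits (each $\tA_e$ appears in two summands) --- doable by splitting each orbit into alternating independent classes, or via the orbit MGFs of Lemma~\ref{lmm:expect_exp_AB_pi}, but it is extra work the paper sidesteps. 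Two small fixes: state the uniform concentration with failure probability $e^{-\Omega(n\log n)}$ (which your per-permutation tail level already yields after the union bound), since ``with probability $1-o(1)$'' alone does not suffice to absorb the bad event through $\Expect[\norm{A}^2\indc{\calF^c}]\le\sqrt{\Expect[\norm{A}^4]\Prob(\calF^c)}$; and your uncentered mean formula should be $N_1\Expect[A_e^2]+\left(\binom{n}{2}-N_1\right)(\Expect[A_e])^2$, though this is immaterial once you center (Gaussian) or work with $\tA$ (\ER).
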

    
    Now, we are ready to prove the negative results on partial recovery. 
    We start with the Gaussian case. 
    \begin{proof}[Proof of \prettyref{thm:Gaussian_negative}]
        In the Gaussian model, we have
        \[
        I(P) = D\pth{\calN  \Big( \left(\begin{smallmatrix} 0\\ 0\end{smallmatrix}\right) , \left(\begin{smallmatrix} 1 & \rho \\ \rho & 1 \end{smallmatrix}\right)  \Big )
        \Big\|\calN  \Big( \left(\begin{smallmatrix} 0\\ 0\end{smallmatrix}\right) , \left(\begin{smallmatrix} 1 & 0 \\ 0 & 1 \end{smallmatrix}\right)  \Big )} = \frac{1}{2} \log \frac{1}{1-\rho^2}. 
        \]
        Assume that $\rho^2 = (4-\epsilon/2)  \frac{\log n}{n}$. Fix some $\theta_0\in (0,\rho^2)$ to be chosen. Then
        \begin{align*}
            \binom{n}{2} \frac{1}{2} \log \frac{1}{1-\rho^2}-\zeta_n & \stepa{=}  I(A, B;\pi ) \\
            & \stepb{\le} \frac{1}{2(1-\rho^2)^2} \int_0^{\rho^2} \mmse_\theta(\Api) d\theta\\
            & \stepc{\le} \frac{1}{2(1-\rho^2)^2} 
            \pth{\binom{n}{2} \theta_0 + 
                \mmse_{\theta_0}(\Api) (\rho^2-\theta_0))} \\
            & = \frac{1}{2(1-\rho^2)^2} 
            \pth{\binom{n}{2} \rho^2 + 
                \left(\mmse_{\theta_0}(\Api) -\binom{n}{2} \right) (\rho^2-\theta_0)}
        \end{align*}
        where $\zeta_n=o(1)$ and $(a)$ holds by~\prettyref{prop:MI} ;
        $(b)$ follows from the I-MMSE formula given in~\prettyref{prop:I_MMSE};
        $(c)$ holds because $\mmse_\theta(\Api) \le  \expect{\|A\|_2^2} = \binom{n}{2}$ 
        and the fact that $ \mmse_\theta(\Api)$ is monotone decreasing in $\theta$. 
        Rearranging the terms in the last displayed equation, we get
        \begin{align*}
            \mmse_{\theta_0}(\Api) - \binom{n}{2}
            & \geq  \frac{(1-\rho^2)^2}{\rho^2-\theta_0 } 
            \left(
            \binom{n}{2}  \left( \log \frac{1}{1-\rho^2} - \frac{\rho^2}{\left(1-\rho^2\right)^2} \right)
            -2\zeta_n \right) \\
            &  \geq - \frac{(1-\rho^2)^2}{\rho^2-\theta_0 } \left( \binom{n}{2} \frac{2\rho^4}{(1-\rho^2)^2} + 2\zeta_n \right),
        \end{align*}
        where the last inequality holds because $\log(1+x) \ge x-x^2$ for $x\ge 0$. 
        Choosing $\theta_0=(4-\epsilon) \frac{\log n}{n}$, we conclude that 
        \begin{align}
            \mmse_{\theta_0}(\Api)  
            \ge \binom{n}{2} \left( 1- O\left( \rho^2 + \frac{\zeta_n}{n^2 \rho^2} \right) \right)
            = \binom{n}{2} \left( 1 -O \left( \frac{\log n}{n} \right) \right),\label{eq:trivial_mmse_gaussian}
        \end{align}
        where the last equality holds because $\rho^2=\Theta(\log (n)/n)$ 
        and $\zeta_n=o(1)$. 
        Since $\expect{\|A\|_2^2}=\binom{n}{2}$,
        it follows from~\prettyref{prop:graph_to_permutation} that 
        $$
        \Expect_{\theta_0}[\overlap(\hat{\pi}, \pi)] \le O\left( \left( \frac{\log n}{n}\right)^{1/4} \right).
        $$
        Finally, by Markov's inequality, for any $\delta_n=\omega\left(  \left( \frac{\log n}{n}\right)^{1/4} \right)$ (in particular,
        a fixed constant $\delta_n>0$), $       \calP_{\theta_0}\{ \overlap(\hat{\pi}, \pi) \ge \delta_n \} = o(1)$.
        Note that $\calP_{\theta_0}$ corresponds to the Gaussian model with squared correlation coefficient equal to $\theta_0 = (4-\epsilon)\frac{\log n}{n}$. By the arbitrariness of $\epsilon$, this completes the proof of \prettyref{thm:Gaussian_negative}.
    \end{proof}
    
    Next, we move to the \ER graph model. 
    \begin{proof}[Proof of the negative parts in Theorems \ref{thm:ER_dense}
        and~\ref{thm:ER_sparse}]
        Let $s^2=\frac{(2-\epsilon) \log n}{np (\log (1/p)-1+p)}$ in the dense
        regime and 
        $s^2=\frac{1-\epsilon}{np}$ in the sparse regime. 
        Then we get that 
        \begin{align*}
            -\zeta_n -\binom{n}{2} qs^2 
            & \stepa{\le}  \int_{q}^s \frac{\theta-q}{s(1-q)^2} 
            \left( \mmse_\theta(\Api) - \binom{n}{2} q (1-q) \right) d\theta \\
            & \stepb{\le}  \int_{(1-\epsilon)s}^s  \frac{\theta-q}{s(1-q)^2}   \left( \mmse_{(1-\epsilon)s} (\Api ) - \binom{n}{2} q (1-q) \right) d\theta \\
            & =    \underbrace{\frac{s^2(2\epsilon-\epsilon^2) -2\epsilon sq}{2s(1-q)^2}}_{=\Theta(s)}  \left( \mmse_{(1-\epsilon)s} (\Api ) - \binom{n}{2} q (1-q) \right) 
        \end{align*}
        where $\zeta_n=o(1)$ in the dense regime 
        and $\zeta_n=O(\log n)$ in the sparse regime;
        $(a)$ follows from~\prettyref{prop:MI} and~\prettyref{prop:I_MMSE_ER};
        $(b)$ holds because  
        $\mmse_\theta(\Api) \le  \expect{\|A-\expect{A} \|_2^2} = \binom{n}{2}q (1-q)$ 
        and $ \mmse_\theta(\Api)$ is monotone decreasing in $\theta$.\footnote{The fact that 
            $\mmse_\theta(\Api)$ is monotone decreasing in $\theta$ follows from a simulation argument. 
            Let $(A,B)\sim\calP_\theta$. Fix $\theta'$ such that $q<\theta'<\theta<s$.  
            Define $B'=(B'_{ij})$ by passing each $B_{ij}$ independently through the same (asymmetric) channel $W$ to obtain $B'_{ij}$, where
            $W(0|1) = \frac{(1-q)(\theta-\theta')}{\theta-q}$ and $W(1|0) = \frac{q(\theta-\theta')}{\theta-q}$ are well-defined. Then $(A,B')\sim\calP_{\theta'}$. 
        }       
        Rearranging the terms in the last displayed equation, we conclude that
        \begin{align*}
            \mmse_{(1-\epsilon)s} (\Api )  
            & \ge  \binom{n}{2} q (1-q) \left(  1 - O \left( \frac{\zeta_n}{n^2q s}  + s \right) \right) \\
            & \ge  \binom{n}{2} q \left( 1- O(s) \right),
        \end{align*}
        where the last inequality holds because $q\le s$, $\zeta_n = O(n^2 qs^2)$ and $s^2 \asymp \frac{\log n}{np\log(1/p)}$. 
        Since $\expect{\|A \|_2^2} = \binom{n}{2}q$, it follows from~\prettyref{prop:graph_to_permutation} that 
        $$
        \Expect_{(1-\epsilon)s}[\overlap(\hat{\pi}, \pi)] \le O \left( s^{1/4} + \left( \frac{\log n}{nq}\right)^{1/4}  \right)
        = O \left( \left( \frac{\log n}{nq}\right)^{1/4}  \right),
        $$
        where the last equality holds because $nqs=nps^2=O(\log n)$. 
        Note that in the dense regime, since $s^2 \asymp \frac{\log n}{np(\log\frac{1}{p}-1+p)}$ and $p=n^{-o(1)}$, 
        we have $nq=nps = \omega(\log n)$.
        This also holds in the sparse regime when $s^2 \asymp \frac{1}{np}$ under the extra assumption that $np = \left(\log^2 n \right)$.
        Thus, by Markov's inequality, for any $\delta_n=\omega\left(  \left( \frac{\log n}{nq}\right)^{1/4} \right)$, in particular,
        any fixed constant $\delta_n>0$, 
        we have $\calP_{(1-\epsilon)s}\{ \overlap(\hat{\pi}, \pi) \ge \delta_n \} = o(1)$.
        In other words, we have shown the desired impossibility result under the distribution
        $\calP_{(1-\epsilon)s}$, which corresponds to the correlated \ER model with parameters $p'=\frac{p}{1-\epsilon}$ and $s'=s(1-\epsilon)$. By the arbitrariness of $\epsilon$, this completes the proof.      
    \end{proof}

    \subsection{Proof of \prettyref{prop:MI}}
    In this subsection, we prove \prettyref{prop:MI}, which reduces to 
    bounding $D \left( \calP_{A,B} \| \calQ_{A,B} \right)$. 
    It is well-known that KL divergence can be bounded by the $\chi^2$-divergence (variance of the likelihood ratio). This method, however, is often too loose as the second moment can be derailed by rare events. Thus a more robust version is by means of \emph{truncated second moment}, which has been carried out in \cite{wu2020testing} to bound $\mathrm{TV} \left( \calP_{A,B}, \calQ_{A,B} \right)$ for studying the hypothesis testing problem in graph matching. Here we leverage the same result to bound the KL divergence.
    To this end, we first present a general bound then specialize to our problem in both Gaussian and \ER models. 
    
    \begin{lemma}
        \label{lmm:trunc2MI}        
        Let $\calP_{XY}$ denote the joint distribution of $(X,Y)$.
        Let $\calE$ be an event independent of $X$ such that $\calP(\calE)=1-\delta$.
        Let $\calQ_Y$ be an auxiliary distribution such that $\calP_{Y|X} \ll \calQ_Y$  $\calP_X$-a.s. 
        Then
        \begin{equation}
            D(\calP_Y\|\calQ_Y) \leq \log(1+\chi^2(\calP_{Y|\calE}\|\calQ_Y)) + \delta \pth{\log \frac{1}{\delta} + \Expect[D(\calP_{Y|X} \| \calQ_Y)]} + \sqrt{\delta \cdot \Var \pth{\log\frac{d\calP_{Y|X}}{d\calQ_Y}}},
            \label{eq:trunc2MI}     
        \end{equation}
        where $\calP_{Y|\calE}$ denote the distribution of $Y$ conditioned on $\calE$, 
        and the $\chi^2$-divergence is defined as $\chi^2(P\|Q) = \Expect_Q[(\frac{dP}{dQ})^2]$ if $P\ll Q$ and $\infty$ otherwise. 
    \end{lemma}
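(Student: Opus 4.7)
The plan is to control $D(\calP_Y\|\calQ_Y)$ by decomposing along $\calE$ versus $\calE^c$, applying the standard inequality $D(P\|Q)\leq\log(1+\chi^2(P\|Q))$ on the ``good event'' piece, and exploiting the independence $\calE\perp X$ together with Cauchy--Schwarz on the ``bad event'' piece.

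\textbf{Step 1 (mixture decomposition).} First I will write $\calP_Y=(1-\delta)\calP_{Y|\calE}+\delta\calP_{Y|\calE^c}$ and use convexity of KL in its first argument (equivalently, data processing applied to the augmented observation $(Y,\mathbf{1}_\calE)$ with reference $\calQ_Y\otimes\calP_{\mathbf{1}_\calE}$) to get
\[
D(\calP_Y\|\calQ_Y)\ \leq\ (1-\delta)\,D(\calP_{Y|\calE}\|\calQ_Y)\ +\ \delta\,D(\calP_{Y|\calE^c}\|\calQ_Y).
\]
The first summand is immediately bounded by $\log(1+\chi^2(\calP_{Y|\calE}\|\calQ_Y))$ via Jensen's inequality applied to $\log$, producing the first term of~\prettyref{eq:trunc2MI}.

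\textbf{Step 2 (bad event via conditioning on $X$).} Because $\calE\perp X$, the law of $X$ conditional on $\calE^c$ is still $\calP_X$, hence $\calP_{Y|\calE^c}=\Expect_{\calP_X}[\calP_{Y|X,\calE^c}]$, and convexity of KL gives $\delta D(\calP_{Y|\calE^c}\|\calQ_Y)\leq\delta\Expect_{\calP_X}[D(\calP_{Y|X,\calE^c}\|\calQ_Y)]$. Writing $\ell(y,x)\triangleq d\calP_{Y|X=x}/d\calQ_Y(y)$ and $\phi(y,x)\triangleq\calP(\calE^c\mid Y=y,X=x)$, the density of $\calP_{Y|X=x,\calE^c}$ with respect to $\calQ_Y$ equals $\phi(y,x)\ell(y,x)/\delta$, and a direct computation (using $\Expect_\calP[\phi(Y,X)\mid X]=\calP(\calE^c\mid X)=\delta$ by independence) yields
\[
\delta\Expect_{\calP_X}\!\left[D(\calP_{Y|X,\calE^c}\|\calQ_Y)\right]\ =\ \delta\log\tfrac{1}{\delta}\ +\ \Expect_\calP\!\left[\mathbf{1}_{\calE^c}\log\ell(Y,X)\right]\ +\ \Expect_\calP\!\left[\phi(Y,X)\log\phi(Y,X)\right].
\]
Since $u\log u\leq 0$ on $[0,1]$, the last term is non-positive and can be dropped.

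\textbf{Step 3 (Cauchy--Schwarz).} Finally I will center $\log\ell(Y,X)$ by its $\calP$-mean, which equals $\Expect_\calP[\log\ell]=\Expect_{\calP_X}[D(\calP_{Y|X}\|\calQ_Y)]$, and apply Cauchy--Schwarz to the residual: $\bigl|\Expect_\calP[\mathbf{1}_{\calE^c}(\log\ell-\Expect_\calP[\log\ell])]\bigr|\leq\sqrt{\delta\cdot\Var_\calP(\log\ell)}$. Plugging this into the identity $\Expect_\calP[\mathbf{1}_{\calE^c}\log\ell]=\delta\Expect_\calP[\log\ell]+\Expect_\calP[\mathbf{1}_{\calE^c}(\log\ell-\Expect_\calP[\log\ell])]$ and combining with Steps~1 and~2 yields exactly the three terms on the right-hand side of~\prettyref{eq:trunc2MI}. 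The main obstacle is the general case in Step~2 where $\calE$ need not be $\sigma(Y)$-measurable: one must correctly identify the density $\phi(y,x)\ell(y,x)/\delta$ for $\calP_{Y|X,\calE^c}$ and then verify non-positivity of the $\phi\log\phi$ term so that it may be discarded without penalty. When $\calE\in\sigma(Y)$, $\phi$ is $\{0,1\}$-valued and the argument collapses to an elementary substitution, so essentially all the subtlety of the lemma sits in this single step.
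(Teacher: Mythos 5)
Your proof is correct and follows essentially the same route as the paper's: the same mixture decomposition of $\calP_Y$ over $\calE$ versus $\calE^c$, the bound $D\le\log(1+\chi^2)$ on the good piece, conditioning on $X$ together with the independence of $\calE$ and $X$ to extract the $\delta\log\frac{1}{\delta}$ and $\delta\,\Expect[D(\calP_{Y|X}\|\calQ_Y)]$ terms, and Cauchy--Schwarz on the centered $\log\frac{d\calP_{Y|X}}{d\calQ_Y}$. The only difference is cosmetic: the paper writes the conditional density via the indicator $\indc{(X,Y)\in\calE^c}/\delta$ (implicitly treating $\calE$ as determined by $(X,Y)$), whereas you introduce $\phi=\calP(\calE^c\mid X,Y)$ and drop the non-positive $\Expect[\phi\log\phi]$ term, which covers the marginally more general case of an event carrying extra randomness.
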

    \begin{proof}
        Note that $\calP_Y = (1-\delta) \calP_{Y|\calE} + \delta \calP_{Y|\calE^c}$. 
        Thanks to the convexity of the KL divergence,  Jensen's inequality yields
        \[
        D(\calP_Y\|\calQ_Y) \leq (1-\delta) D(\calP_{Y|\calE} \|\calQ_Y) + \delta D(\calP_{Y|\calE^c} \|\calQ_Y).
        \]
        The first term can be bounded using the generic fact that $D(P\|Q) \leq \log \Expect_Q[(\frac{dP}{dQ})^2]= \log(1+\chi^2(P\|Q))$. 
        Let $g(X,Y) = \log\frac{d\calP_{Y|X}}{d\calQ_Y}$. 
        Using the convexity of KL divergence and the independence of $\calE$ and $X$, we bound the second term as follows:
          \begin{align*}
            D(\calP_{Y|\calE^c} \|\calQ_Y)
            \leq & ~ \Expect[D(\calP_{Y|X,\calE^c} \|\calQ_Y)] \\
             = &~\expect{ \int d\calP_{Y|X,\calE^c} 
             \log \pth{ \frac{d\calP_{Y|X,\calE^c}}{d\calQ_Y} }} \\
            = & ~ \expect{
            \int d\calP_{Y|X} \frac{\indc{(X,Y)\in\calE^c}}{\calP(\calE^c)}
             \log \pth{ \frac{d\calP_{Y|X}}{d\calQ_Y\calP(\calE^c)} }} \\
             = & ~\log \frac{1}{\delta}
             + \frac{1}{\delta}\Expect[(g(X,Y)\indc{(X,Y)\in\calE^c}] \\
            = & ~  \log \frac{1}{\delta} + \Expect[g(X,Y)] + \frac{1}{\delta} \Expect\left[\left(g(X,Y) - \Expect[g(X,Y)]\right) \indc{(X,Y)\in\calE^c}\right].
        \end{align*}
        Applying Cauchy-Schwarz to the last term completes the proof.   
    \end{proof}

    Next we apply \prettyref{lmm:trunc2MI} in the context of the random graph matching by identifying $X$ and $Y$ with the latent $\pi$ and the observation $(A,B)$ respectively.
    Let $\calE$ be a certain high-probability event 
    independent of $\pi$. Then
    \[
    \calP_{A,B|\calE}= \frac{1}{\calP(\calE)\ n! } \sum_{\pi \in \calS_n} \calP_{A,B} \indc{(A,B,\pi) \in \calE}.
    \]
    Recall that the null model is chosen to be $\calQ_{A,B}=\calP_A \otimes \calP_B$. 
    As shown in \cite{wu2020testing}, for both the Gaussian   and \ER graph model, it is possible to construct a high-probability event $\calE$ satisfying the symmetry condition $\calP(\calE \mid \pi)=\calP(\calE)$, such that $\chi^2(\calP_{A,B|\calE}\|\calQ_{A,B}) = o(1)$. This bounds the first term in \prettyref{eq:trunc2MI}. For the second term,  since both $A$ and $B$ are individually independent of $\pi$, we have 
    \begin{equation}
        \expect{\log \frac{d\calP_{ A,B \mid \pi}}{d\calQ_{A,B}}} = I(A;B|\pi) = \binom{n}{2} D(P\|Q) = \binom{n}{2} I(P),
        \label{eq:IABpi}
    \end{equation}
    where the last equality holds because $Q$ is the product of the marginals of $P$.
    The third term in \prettyref{eq:trunc2MI} can be computed explicitly.
    Next we give details on how to complete the proof of \prettyref{prop:MI}.
    
    \paragraph{Gaussian   model}
    It is shown in \cite[Section 4.1, Lemma 1]{wu2020testing} that there exists an event $\calE$ independent of $\pi$ such that
    $
    \calP(\calE^c)=e^{-\Omega(n)}
    $
    and $
    \chi^2 \left( \calP_\calE \| \calQ \right) = o(1),
    $
    provided that $\rho^2 \le \frac{(4-\epsilon)\log n}{n}$.
    Furthermore, by~\prettyref{eq:IABpi} and \prettyref{eq:IP-gaussian}, we have 
    $\expect{\log \frac{d\calP_{ A,B \mid \pi}}{d\calQ_{A,B}}} = \binom{n}{2} \frac{1}{2}\log \frac{1}{1-\rho^2} = O(n \log n)$.
    To compute the variance,
    note that   
    $$
    \log \frac{d\calP_{ A,B \mid \pi}}{d\calQ_{A,B}}
    = - \frac{1}{2} \binom{n}{2} \log (1-\rho^2) -\frac{h(A, B, \pi)}{4(1-\rho^2)}   ,
    $$
    where 
    \[
    h(A, B, \pi) \triangleq \rho^2 \norm{A}^2 + \rho^2 \norm{B}^2  -2 \rho \iprod{A^\pi}{B}.
    \]      
    Thus $\Var( \log \frac{d\calP_{ A,B \mid \pi}}{d\calQ_{A,B}}) = \frac{1}{16(1-\rho^2)^2}\Var(h(A, B, \pi))$. 
    Write $B=\rho A^\pi + \sqrt{1-\rho^2}Z$ where $Z$ is an independent copy of $A$, we have 
    $h(A, B, \pi) = \rho^2 (\norm{B}^2-\norm{A}^2)  - 2  \rho \sqrt{1-\rho^2}\iprod{A^\pi}{Z}$.
    Here both $\norm{A}^2$ and $\norm{B}^2$ are distributed as $\chi^2_{\binom{n}{2}}$, with variance equal to $2 \binom{n}{2}$. Furthermore,
    $\Var(\iprod{A^\pi}{Z}) = \binom{n}{2}$. Thus   $\Var(h(A, B, \pi)) = O(n\log n)$.
    Applying \prettyref{lmm:trunc2MI}, we conclude that $D(\calP_{A,B}\|\calQ_{A,B})=o(1)$.

    \paragraph{\ER Graphs}
    In the dense regime of $p=n^{-o(1)}$ and $p=1-\Omega(1)$, it is shown in \cite[Section A.3, Lemma 6]{wu2020testing} that there exists an event $\calE$ such that
    $
    \calP(\calE^c)=e^{-n^{1-o(1)}}
    $
    and $   \chi^2 \left( \calP_\calE \| \calQ \right) = o(1)$, 
    provided that $nps^2 \left( \log (1/p)-1+p \right) \le (2-\epsilon) \log(n)$.
    In the sparse regime (see \cite[Section 5, Lemma 2]{wu2020testing}), it is possible to choose $\calE$ such that
    $
    \calP(\calE^c)=O(\frac{1}{n})
    $
    and $   \chi^2 \left( \calP_\calE \| \calQ \right) = o(1)$, 
    provided that $nps^2 \le 1-\epsilon$ and $np=\omega(1)$.

    By~\prettyref{eq:IABpi} and \prettyref{eq:IP-ER}, we have $\expect{\log \frac{d\calP_{ A,B \mid \pi}}{d\calQ_{A,B}}} = \binom{n}{2} I(P)$,
    where $ I(P) = q d(s\|q) + (1-q) d(\eta \|q)$, with $q=ps$ and $\eta = \frac{q(1-s)}{1-q}$. In both the dense and sparse regime, one can verify that 
    \[
    I(P) = s^2 p \pth{p-1+\log\frac{1}{p}}(1+o(1)).
    \]
    As a result, we have $\expect{\log \frac{d\calP_{ A,B \mid \pi}}{d\calQ_{A,B}}}=O(n \log n)$ in both cases.
    
    It remains to bound the variance in \prettyref{eq:trunc2MI}. 
    Note that 
    $$
    \log \frac{\calP( A,B \mid \pi)}{\calQ(A,B)}
    = \binom{n}{2} \log \frac{1-\eta}{1-ps} + 
    \frac{1}{2}  h(A,B, \pi),
    $$
    where 
    $$
    h(A, B, \pi) \triangleq \log \frac{1-s}{1-\eta} \left(  \norm{A}^2 + \norm{B}^2 \right)  +\log \frac{s(1-\eta)}{\eta(1-s)} \iprod{A^\pi}{B}.
    $$
    Since $p$ is bounded away from $1$ and $s=o(1)$ in both the dense regime ($p=n^{-o(1)}$) and sparse regime ($p=n^{-\Omega(1)}$ and $np=\omega(1)$), it follows that 
    \begin{align*}
        \log \frac{1-\eta}{1-s}  & = \log \left( 1 +  \frac{s(1-p) }{(1-s)(1-ps) } \right) =\left(1+o(1)\right) s (1-p)\\
        \frac{s(1-\eta)}{\eta(1-s)} &=  \frac{(1-\eta) (1-ps) }{ p (1-s)^2 } =\frac{1+o(1)}{p}. 
    \end{align*}
    Note that $\norm{A}^2,\norm{B}^2\sim\Binom(\binom{n}{2},ps)$ and 
    $ \iprod{A^\pi}{B} \sim\Binom(\binom{n}{2},ps^2)$. We have
    $\Var(h) = O(n^2 ps^2 \log^2 \frac{1}{p})$. Consequently, 
    $\sqrt{\Var(h)\calP(\calE^c)}=o(1)$   and 
    $O(\log n)$ in the dense and sparse case, respectively.
    Applying \prettyref{lmm:trunc2MI} yields the same upper bound on $D(\calP_{A,B}\|\calQ_{A,B})$.

    \subsection{Proof of~\prettyref{prop:I_MMSE}}
    The proof follows from a simple application of the I-MMSE formula
    for the additive Gaussian channel.
    %
    Note that under the interpolated model $\calP_\theta$, we have
    $$
    \frac{B}{\sqrt{1-\theta}}= \sqrt{\frac{\theta}{1-\theta}} A^\pi + Z,
    $$
    which is the output of an additive Gaussian channel with input $\Api$ and the standard Gaussian noise $Z$.
    Letting $I(\theta)=I(B; A^\pi \mid \pi)=I(A, B; \pi)$
    and using the I-MMSE formula~\cite[Theorem 2]{GuoShamaiVerdu05}, we have
    \begin{align}
        \frac{d I(\theta)}{d \left( \theta /(1-\theta) \right)}
        =\frac{1}{2} \mmse_{\theta}\left(\Api\right). \label{eq:I_MMSE_Gaussian}
    \end{align}
    Thus $\frac{d I(\theta)}{d \theta }
    =\frac{1}{2(1-\theta)^2 } \mmse_{\theta}\left(\Api\right)$. Integrating over $\theta$ from $0$ to $\rho^2$ and noting $I(0)=0$, we arrive at 
    $$
    I(\rho^2)= \int_{0}^{\rho^2}\frac{1}{2(1-\theta)^2 } \mmse_{\theta}\left(\Api\right) d\theta.
    $$

    \subsection{Proof of \prettyref{prop:I_MMSE_ER}}
    Note that under the interpolated model $\calP_\theta$, we have
    \begin{equation*}
        p_\theta(y|x) = 
        \begin{cases}
            \theta  &  x=1,y=1\\
            1-\theta & x=1,y=0\\
            \eta  &  x=0,y=1\\
            1-\eta & x=0,y=0\\
        \end{cases},
        \quad \eta = \frac{q(1-\theta)}{1-q}.
    \end{equation*}
    Let $g(\theta)\triangleq D(P_\theta\|Q)= q \cdot d(\theta\|q) + (1-q) d(\eta\|q)$. 
    Then $g(s)=D(P\|Q)$ and $g(q)=0$.
    Let $I(\theta)=I_\theta(A,B; \pi)$, where the subscript $\theta$ indicates that $A^\pi$ and $B$ are distributed according to $\calP_\theta$. 
    Then 
    \[
    I_s(A^\pi;B|A)
    = H_s(A^\pi|A) - H_s(A^\pi|B,A) = 
    H_{q}(A^\pi|B,A) - H_{s}(A^\pi|B,A)
    = - \int_q^s \frac{d}{d\theta}  H_\theta(A^\pi|B,A)
    d\theta,
    \]
    where the second equality holds because for a fixed $q$, $H_\theta(A^\pi | A)$ does not change with $\theta$
    and when $\theta=q$, $A^\pi$ and $B$ are independent
    and hence $I_q(A^\pi; B |A)=0$ so that $H_q(A^\pi |A)=H_q(A^\pi|B, A)$.
    By \cite[Lemma 7.1]{deshpande2015asymptotic}, we have
    \[
    \frac{d}{d\theta}  H_\theta(A^\pi|B,A) = \termI + \binom{n}{2} \frac{d}{d\theta} 
    \left( h(q) - g(\theta) \right) = \termI - \binom{n}{2} g'(\theta),
    \]
    where $h(q)=-q\log q - (1-q) \log (1-q)$ is the binary entropy function, 
    \[
    \termI = \sum_{e\in \binom{[n]}{2}} \sum_{x_e,y_e} \frac{\partial p_\theta(y_e|x_e)}{\partial \theta} \Expect \qth{\mu_e(x_e|B_{\backslash e},A) \log \sum_{x_e'} p_\theta(y_e|x_e') \mu_e(x_e'|B_{\backslash e},A)},
    \]
    $B_{\backslash e}$ denotes the adjacency vector $B$ excluding $B_e$, 
    and $\mu_e(\cdot \mid B_{\backslash e},A)$ is the distribution of $A_e^{\pi}$ conditional on $(B_{\backslash e}, A)$
    under $\calP_\theta$.
    Since $g(q)=0$, we have
    \begin{equation}
        I_s(A^\pi;B|\pi) = -\int_q^s \termI +\binom{n}{2} g(s) = -\int_q^s \termI + \binom{n}{2} D(P\|Q).
        \label{eq:I2}
    \end{equation}
    It remains to relate $\termI$ to the reconstruction error.
    Note that for $x,y\in\{0,1\}$,
    \begin{equation}
        p_\theta(y|x) = \alpha(x) y  + (1-\alpha(x)) (1-y), \quad \alpha(x) = \theta x+\eta(1-x)
        \label{eq:channel}
    \end{equation}
    and
    \[
    \frac{\partial p_\theta(1|x) }{\partial \theta} = \frac{\partial \alpha(x) }{\partial \theta}  = x + \frac{\partial \eta}{\partial \theta} (1-x) = - \frac{\partial p_\theta(0|x) }{\partial \theta}.
    \]
    Thus for each $x_e$,
    \begin{align*}
        & ~ \sum_{y_e=0,1} \frac{\partial p_\theta(y_e|x_e)}{\partial \theta} \Expect \qth{\mu_e(x_e|B_{\backslash e},A) \log \sum_{x_e'=0,1} p_\theta(y_e|x_e') \mu_e(x_e'|B_{\backslash e},A)}\\
        = & ~ \frac{\partial p_\theta(1|x_e)}{\partial \theta} \Expect \qth{\mu_e(x_e|B_{\backslash e},A) \log \frac{\sum_{x_e'=0,1} p_\theta(1|x_e') \mu_e(x_e'|B_{\backslash e},A)}{1-\sum_{x_e'=0,1} p_\theta(1|x_e') \mu_e(x_e'|B_{\backslash e},A)}  }\\
        = & ~ - \frac{\partial p_\theta(1|x_e)}{\partial \theta} \Expect \qth{\mu_e(x_e|B_{\backslash e},A) h'(y_e)},
    \end{align*}
    where we defined
    \[
    y_e \equiv y_e(B_{\backslash e},A) \triangleq \sum_{x_e=0,1}  p_\theta(1|x_e) \mu_e(x_e|B_{\backslash e},A).
    \]
    and used $h'(x) = \log \frac{1-x}{x}$. Then we have
    \begin{align*}
        & ~ \sum_{x_e=0,1} \sum_{y_e=0,1} \frac{\partial p_\theta(y_e|x_e)}{\partial \theta} \Expect \qth{\mu_e(x_e|B_{\backslash e},A) \log \sum_{x_e'=0,1} p_\theta(y_e|x_e') \mu_e(x_e'|B_{\backslash e},A)} = - \Expect \qth{  \frac{\partial y_e}{\partial \theta} h'(y_e)  }
    \end{align*}
    Let $\tilde x_e = \Expect[A^\pi_e |B_{\backslash e},A]$. Then
    $y_e = \theta \tilde x_e + \eta (1-\tilde x_e)$. Let 
    \[
    \Delta_e=y_e-q = (\theta-\eta) (\tilde x_e - q)  = \frac{\theta-q}{1-q} (\tilde x_e-q).
    \]
    Then $\Expect[\Delta_e]=0$ and $\Expect[\Delta_e^2]= (\frac{\theta-q}{1-q})^2 \Var(\tilde x_e)$.
    Furthermore,
    \[
    \frac{\partial y_e}{\partial \theta} = \frac{1}{1-q} (\tilde x_e-q).
    \]
    
    Using $h''(x)=-\frac{1}{x(1-x)}$, we get $h'(y_e) = h'(q) -\frac{1}{\xi(1-\xi)} \Delta_e$ for some $\xi$ between $y_e$ and $q$.
    Note that $\eta \leq q \leq \theta \leq s$ and $y_e \in [\eta,\theta]$. Thus $\eta \leq \xi \leq s$. 
    So
    \begin{align*}
        - \Expect \qth{  \frac{\partial y_e}{\partial \theta} h'(y_e)  } 
        = & ~  -  \frac{h'(q)}{1-q} \Expect \qth{\tilde x_e-q} + \frac{\theta-q}{(1-q)^2} \Expect \qth{\frac{(\tilde x_e-q)^2}{\xi(1-\xi)}}   \\
        \geq & ~ \frac{\theta-q}{s(1-q)^2} \Var_\theta(\tilde x_e).
    \end{align*}
    Integrating over $\theta$ we get
    \begin{align}
        \int_q^s \termI
        = & ~  \sum_e \int_q^s d\theta   \left(- \Expect \qth{  \frac{\partial y_e}{\partial \theta} h'(y_e)  }  \right) \\
        \geq & ~ \sum_e \int_q^s d\theta  \frac{\theta-q}{s(1-q)^2}  \Var_\theta(\tilde x_e)  \label{eq:int_ER}
    \end{align}
    
    Finally, note that the above bound pertains to $\tilde x_e=\Expect[A^\pi_e |B_{\backslash e},A]$, which we now relate to $\hat x_e=\Expect[A^\pi_e |B,A]$.
    Denote by $\mu_e(\cdot|B,A)$ the full posterior law of $A^\pi_e$. 
    Note that
    \[
    \hat x_e= \sum_{x_e}  x_e \mu_e(x_e|B,A) = \frac{\sum_{x_e} x_e \mu_e(x_e|B_{\backslash e},A) p_\theta(B_e|x_e) }{\sum_{x_e} \mu_e(x_e|B_{\backslash e},A) p_\theta(B_e|x_e)}
    \]
    By \prettyref{eq:channel}, we have
    \[
    p_\theta(y|x) = 1- y  + \left(\theta x+\eta(1-x) \right)(2y-1).
    \]
    After some simplification, we have
    \[
    \hat x_e= \sum_{x_e}  x_e  \mu_e(x_e|B,A) = \frac{\tilde x_e (\theta B_e + (1-\theta) (1-B_e))}{1-\eta-(1-2\eta) B_e + \tilde x_e (2B_e-1) ( \theta- \eta) }
    =\begin{cases}
        \frac{(1-\theta) \tilde x_e}{1-\eta - \tilde x_e (\theta-\eta) } & B_e=0 \\
        \frac{\theta \tilde x_e}{\eta + \tilde x_e (\theta-\eta) } & B_e=1 \\
    \end{cases}.
    \]
    Since $\eta\leq q\leq\theta\leq s$, we have
    \[
    \hat x_e \leq B_e \min\pth{1,\frac{s}{\eta} \tilde x_e}  + (1-B_e) \tilde x_e
    \]
    and hence
    \begin{align*}
        \Expect[\hat x_e^2] \leq & \expect{\min\pth{1,\frac{s}{\eta} \tilde x_e}^2 B_e} +  \Expect[\tilde x_e^2].
    \end{align*}
    Note that
    \begin{align*}
        \expect{\min\pth{1,\frac{s}{\eta} \tilde x_e}^2 B_e}
        \stepa{=} & ~ \expect{\expect{\min\pth{1,\frac{s}{\eta} \tilde x_e}^2 \Big| A^\pi_e} \Expect[B_e\mid A^\pi_e]} \\
        \stepb{=} & ~ \expect{\expect{\min\pth{1,\frac{s}{\eta} \tilde x_e}^2 \Big| A^\pi_e} (s A^\pi_e + \eta (1-A^\pi_e))} \\
        \stepc{\leq} & ~ s \expect{A^\pi_e} + s \expect{\expect{\tilde x_e\Big| A^\pi_e} (1-A^\pi_e)} \\
        \leq & ~ s q  + s \Expect[\tilde x_e] =2sq,
    \end{align*}
    where (a) follows from the conditional independence of $\tilde x_e$ (which depends on $(A,B_{\backslash e})$) and $B_e$ given $A^\pi_e$;
    (b) follows from \prettyref{eq:channel}; (c) follows by using $\min\pth{1,\frac{s}{\eta} \tilde x_e}^2 \le 1$
    to get the first term and $\min\pth{1,\frac{s}{\eta} \tilde x_e}^2 \le \frac{s}{\eta} \tilde x_e$ to get the second term.
    Combining the previous two displays yields that 
    \begin{align*}
        \Expect_\theta[\hat x_e^2]
        \le 2s q + \expect{ \tilde x_e^2}
        \le 2sq +q^2 + \Var\left(\tilde x_e^2 \right).
    \end{align*}
    It follows that 
    \begin{align}
        \mmse(A^\pi) = \sum_e \expect{ \left( x_e - \hat{x}_e \right)^2 } 
        =\sum_e \left(  \expect{x_e^2} - \expect{ \hat{x}^2_e} \right)
        \ge \binom{n}{2} q (1-q) - 2 \binom{n}{2} sq 
        - \sum_e \Var\left(\tilde x_e^2 \right)
        \label{eq:trivial_overlap_ER_1}. 
    \end{align} 
    Combining~\prettyref{eq:int_ER} with~\prettyref{eq:trivial_overlap_ER_1} yields that 
    \begin{align*}
        \int_q^s \termI 
        & \ge 
        \int_q^s d\theta  \frac{\theta-q}{s(1-q)^2}  \left( \binom{n}{2} q (1-q) - 2 \binom{n}{2} sq - \mmse_\theta(\Api) \right) d\theta \\
        & = \int_q^s d\theta  \frac{\theta-q}{s(1-q)^2}  \left( \binom{n}{2} q (1-q)  - \mmse_\theta(\Api) \right) d\theta
        - \binom{n}{2} \frac{(s-q)^2q}{(1-q)^2} \\
        & \ge \int_q^s d\theta  \frac{\theta-q}{s(1-q)^2}  \left( \binom{n}{2} q (1-q)  - \mmse_\theta(\Api) \right) d\theta
        - \binom{n}{2} qs^2
    \end{align*}
    The conclusion follows by combining the last display with~\prettyref{eq:I2}.

    \subsection{Proof of~\prettyref{prop:graph_to_permutation}}
    In this section, we prove~\prettyref{prop:graph_to_permutation}
    by connecting the MSE of $\Api$ to the Hamming risk of estimating $\pi$.
    In particular, assuming~\prettyref{eq:triv_mmse_cond}, that is,
    $\mmse(\Api)\ge\expect{\norm{A}^2} (1-\xi)$, we aim to show that 
    $\expect{\overlap(\pi, \hat{\pi})} =O\left(\xi^{1/4} + (\frac{n\log n}{\expect{\norm{A}^2}} )^{1/4} \right)$
    for any estimator $\hat{\pi}(A,B)$. 
    We first present a general program and then specialize the argument to 
    the Gaussian   and \ER graph model. 
    
    Recall that $\overlap(\pi, \hat{\pi})$ denotes the fraction of fixed points of $\sigma\triangleq  \pi^{-1} \circ \hat{\pi}$. 
    Let $\alpha(\pi, \hat{\pi})$ denote the fraction of fixed points of the edge permutation $\sigmae$ induced by the node permutation $\sigma$ (cf.~\prettyref{sec:orbit}).
    The following simple lemma relates $\alpha(\pi, \hat{\pi})$ to $\overlap(\pi, \hat{\pi})$. 
    
    \begin{lemma}\label{lmm:edge_to_node_fixed}
        It holds that
        $$
        \expect{ \overlap(\pi,\hat{\pi}) }  \le \sqrt{\expect{\alpha(\pi,\hat{\pi})}} + \frac{1}{n}.
        $$
    \end{lemma}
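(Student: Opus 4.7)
The plan is to establish a deterministic pointwise inequality relating $\overlap(\pi,\hat\pi)$ and $\alpha(\pi,\hat\pi)$, and then take expectations with an application of Jensen's inequality at the end. The key input is the combinatorial identity already recorded in \prettyref{eq:fixed_edge_node}, namely $N_1 = \binom{n_1}{2}+n_2$, where $n_1$ is the number of fixed points of the node permutation $\sigma=\pi^{-1}\circ\hat\pi$ and $N_1$ is the number of fixed points of the induced edge permutation $\sigmae$. In particular, since $n_2\ge 0$, this immediately yields $N_1\ge \binom{n_1}{2}$, and hence
\[
\alpha(\pi,\hat\pi) \;=\; \frac{N_1}{\binom{n}{2}} \;\ge\; \frac{n_1(n_1-1)}{n(n-1)}.
\]

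The next step is to convert this into a bound involving $\overlap(\pi,\hat\pi)=n_1/n$. The case $n_1=0$ is trivial since then $\overlap=0$ and the claim reads $0\le \sqrt{\Expect[\alpha]}+1/n$. For $n_1\ge 1$, I would verify the simple algebraic inequality $\frac{n_1(n_1-1)}{n(n-1)} \ge \frac{(n_1-1)^2}{n^2}$, which after clearing denominators reduces to $n_1+n\ge 1$ and is evident. Taking square roots gives the deterministic bound
\[
\sqrt{\alpha(\pi,\hat\pi)} \;\ge\; \frac{n_1-1}{n} \;=\; \overlap(\pi,\hat\pi) - \frac{1}{n}.
\]

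Finally, taking expectations on both sides and applying Jensen's inequality to the concave function $\sqrt{\cdot}$ yields
\[
\Expect[\overlap(\pi,\hat\pi)] \;\le\; \Expect\!\left[\sqrt{\alpha(\pi,\hat\pi)}\right] + \frac{1}{n} \;\le\; \sqrt{\Expect[\alpha(\pi,\hat\pi)]} + \frac{1}{n},
\]
which is the claimed bound. The only non-routine element is the pointwise comparison, and that is handled entirely by \prettyref{eq:fixed_edge_node} together with the elementary inequality above; there is no real obstacle.
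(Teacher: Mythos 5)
Your proof is correct and rests on exactly the same ingredients as the paper's: the identity \prettyref{eq:fixed_edge_node} (giving $\binom{n_1}{2}\le\binom{n}{2}\alpha(\pi,\hat\pi)$), Jensen's inequality, and elementary algebra. The only difference is the order of operations — you derive the pointwise bound $\overlap(\pi,\hat\pi)\le\sqrt{\alpha(\pi,\hat\pi)}+\tfrac1n$ and then apply Jensen to the concave $\sqrt{\cdot}$, whereas the paper first applies Jensen to the convex map $x\mapsto\binom{nx}{2}$ and then solves the resulting quadratic inequality for $\Expect[\overlap(\pi,\hat\pi)]$ — which is an immaterial reordering.
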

    \begin{proof}
        In view of~\prettyref{eq:fixed_edge_node},
        $$
        \binom{n\overlap(\pi,\hat{\pi})}{2} \le \binom{n}{2} \alpha (\pi,\hat{\pi}).
        $$
        By Jensen's inequality, 
        $$
        \binom{n \expect{ \overlap(\pi,\hat{\pi})} }{2}
        \le \expect{\binom{n\overlap(\pi,\hat{\pi})}{2} } \le 
        \binom{n}{2}  \expect{\alpha (\pi,\hat{\pi})}.
        $$
        The desired conclusion follows because 
        for $x,y\ge 0$, $\binom{nx}{2} \le \binom{n}{2} y$ $\iff$
        $nx^2 - x - (n-1) y \le 0$ $\implies$ $x \le \frac{1+\sqrt{1+4n(n-1)y}}{2n} \le \sqrt{y} + \frac{1}{n}$.
    \end{proof}
    
    In view of~\prettyref{lmm:edge_to_node_fixed} and the fact that $\expect{\norm{A}^2}\le n^2$, it suffices to show
    $\expect{\alpha(\pi, \hat{\pi})} =O\left(\xi^{1/2} + (\frac{n\log n}{\expect{\norm{A}^2}} )^{1/2} \right)$.
    Let $\alpha_0=\expect{\alpha(\pi,\hat{\pi})}$
    and define an estimator of $A^\pi$ by  $\hat{A}=\alpha_0 A^{\hat{\pi}}+(1-\alpha_0) \expect{A}$.
    This is well-defined since $\alpha_0$ is deterministic and $\hat{\pi}$ only depends on $(A,B)$.
    Intuitively,  $\hat{A}$ can be viewed as an interpolation between the ``plug-in'' estimator $A^{\hat{\pi}}$
    and the trivial estimator $\expect{A^\pi} = \expect{A}$. We remark that to derive the desired lower bound to $\alpha_0$, 
    it is crucial to use the interpolated estimator $\hat{A}$
    rather than the ``plug-in'' estimator, because we expect $\alpha_0$ to be small and $\hat{\pi}$ is only slightly
    correlated with $\pi$. 

    On the one hand, by definition of the MMSE and the assumption~\prettyref{eq:triv_mmse_cond},
    \begin{align}
        \expect{\norm{\Api-\hat{A}}^2} \ge
        \mmse(\Api) \ge \expect{ \norm{A}^2}  (1-\xi). \label{eq:pi_to_A_Gaussian_0}
    \end{align}
    On the other hand, we claim that in both the Gaussian   and \ER model,
    \begin{align}
        \expect{\iprod{\Api}{A^{\hat{\pi}}}}\ge \expect{\|A\|_2^2} 
        \alpha_0 - O\left( \sqrt{ \expect{\|A\|_2^2} n \log n} \right),
        \label{eq:pi_to_A_Gaussian_2}
    \end{align} 
    so that 
    \begin{align}
        \expect{\norm{\Api-\hat{A}}^2}
        & = \expect{\norm{\Api}^2} + \expect{ \norm{\hat{A} }^2} - 2 \expect{\iprod{\Api}{\hat{A}}} \nonumber \\
        & \stepa{=}  (1+\alpha_0^2) \expect{\norm{A}^2} - (1-\alpha_0)^2
        \norm{\expect{A}}^2  - 2 \alpha_0 \expect{\iprod{\Api}{A^{\hat{\pi}}}}  \nonumber \\
        & \le (1-\alpha_0^2)\expect{\norm{A}^2}
        +O\left(\alpha_0 \sqrt{ \expect{\|A\|_2^2} n \log n }\right),
        \label{eq:pi_to_A_Gaussian}
    \end{align}
    where in (a) we used the fact that $\expect{\Api}=\expect{A}$ is entrywise constant and 
    $\iprod{\expect{A}}{A^{\hat \pi}} = \Expect[A_{12}] \sum_{i<j} A_{\hat \pi(i),\hat \pi(j)} = \Expect[A_{12}] \sum_{i<j} A_{i,j} = \iprod{\expect{A}}{A}$    
    so that $\iprod{\expect{A}}{\Expect[A^{\hat \pi}]} = \|\Expect[A]\|^2$.
    Combining \prettyref{eq:pi_to_A_Gaussian_0} and~\prettyref{eq:pi_to_A_Gaussian}  yields
    that  
    \begin{align}
        \alpha_0^2  \le \xi + O\left( \alpha_0  \sqrt{ \frac{n  \log n}{  \expect{ \norm{A}^2} }} \right) \Longrightarrow \alpha_0 =O\left( 
        \xi^{1/2} +  \sqrt{ \frac{n  \log n}{  \expect{ \norm{A}^2} }}\right).
        \label{eq:trivial_overlap_gaussian} 
    \end{align}
    To finish the proof, it remains to prove the claim~\prettyref{eq:pi_to_A_Gaussian_2}. 
    
    \begin{proof}[Proof of Claim~\prettyref{eq:pi_to_A_Gaussian_2}]
        Let $C$ be a sufficiently large enough constant.
        For each permutation $\pi' \in \calS_n$, define an event 
        \begin{equation}
            \calF_{\pi'}=\sth{
                \Iprod{\Api}{A^{\pi'}} \geq \expect{\norm{A}^2} 
                \alpha(\pi,\pi') - C \sqrt{\expect{\norm{A}^2}  n \log n }}
            \label{eq:calFpi}
        \end{equation}
        and set $\calF \triangleq \cap_{\pi' \in \calS_n} \calF_{\pi'}$. 
        It follows that  
        \begin{align*}
            \expect{\iprod{\Api}{A^{\hat{\pi}}}} &= \expect{\iprod{\Api}{A^{\hat{\pi}}} \indc{\calF}}+ \expect{\iprod{\Api}{A^
                    {\hat{\pi}}} \indc{\calF^c}} \\
            & \ge \expect{\norm{A}^2}  \expect{\alpha(\pi,\hat{\pi}) \indc{\calF}} - C \sqrt{ \expect{\norm{A}^2} n \log n}
            - \expect{\norm{A}^2 \indc{\calF^c}} \\
            & \ge \expect{\norm{A}^2} \left( \alpha_0 -\prob{\calF^c}  \right)
            -  C \sqrt{\expect{\norm{A}^2} n \log n} 
            -\sqrt{ \expect{\norm{A}^4 } \prob{\calF^c}},
        \end{align*}
        where the last inequality holds because 
        $$
        \expect{\alpha(\pi,\hat{\pi}) \indc{\calF}}=
        \expect{\alpha(\pi,\hat{\pi}) }- \expect{\alpha(\pi,\hat{\pi}) \indc{\calF^c}}
        \ge \alpha_0- \prob{\calF^c} ,
        $$
        and 
        $
        \expect{\norm{A}^2 \indc{\calF^c}} \le \sqrt{ \expect{\norm{A}^4 } \prob{\calF^c}}
        $
        by the Cauchy-Schwarz inequality. 
        Note that $\expect{\norm{A}^4} = O(n^4)$, and $\expect{\norm{A}^2}$ is equal to $\binom{n}{2}$ in the Gaussian
        case and $\binom{n}{2} q$ in the \ER case (with $q \geq n^{-O(1)}$). 
        To get~\prettyref{eq:pi_to_A_Gaussian_2},
        it suffices to prove $\prob{\calF^c} \le e^{-n\log n}$,
        which, by union bound, further reduces to showing that $\prob{\calF_{\pi'}^c} \le e^{-2n\log n}$ 
        for any permutation $\pi' \in \calS_n$. 
        To this end, we consider the Gaussian   and \ER graph model separately.
        
        For the Gaussian Winger model,
        let $M \in \{0,1\}^{\binom{[n]}{2} \times \binom{[n]}{2}}$ denote the permutation matrix corresponding to the edge permutation $\sigmae$ induced by
        $\sigma=\pi^{-1}\circ \pi'  $. 
        Recalling that $A$ denotes the weighted adjacency vector, we have 
        $
        \Iprod{\Api}{A^{\pi'}} = A^\top M A.
        $
        By Hanson-Wright inequality~\prettyref{lmm:hw}, with probability at least $1-\delta$
        $$
        A^\top M A  \ge \Tr(M) - C' \left( \|M\|_F^2 \sqrt{\log (1/\delta)} + \|M\|_2 \log (1/\delta) \right),
        $$
        where $C'>0$ is a universal constant. 
        Since $\Tr(M)=\binom{n}{2} \alpha(\pi,\pi')$, $\|M\|_F^2=\binom{n}{2}$, and the spectral norm $\|M\|_2=1$,
        it follows that with probability at least $1-\delta$,
        $$
        A^\top M A  \ge \binom{n}{2} \alpha(\pi,\pi') - C'\left( n \sqrt{\log (1/\delta)} +  \log (1/\delta) \right).
        $$
        Choosing $\delta=e^{-2n\log n}$ and $C$ in \prettyref{eq:calFpi} to be a large enough constant, we get that 
        $
        \prob{\calF_{\pi'}} \ge 1-e^{-2n\log n}. 
        $
        Next, we move to the \ER graph model. 
        Fix any permutation $\pi' \in \calS_n$.
        Let $\orbit_1$ denote the set of fixed points of the edge permutation induced by 
        $\pi'\circ \pi^{-1}$. By definition, $|\orbit_1|=\binom{n}{2}\alpha(\pi,\pi') $
        and 
        $$
        \iprod{\Api}{A^{\pi'}}
        \ge \sum_{(i,j) \in \orbit_1} A_{ij} \sim \Binom \left(\binom{n}{2}\alpha(\pi,\pi'), q\right).
        $$
        By Bernstein's inequality, with probability at least $1-\delta$,
        \begin{align*}
            \iprod{\Api}{A^{\pi'}}
            & \ge \binom{n}{2}\alpha(\pi,\pi')q - C' \left( \sqrt{\binom{n}{2} \alpha(\pi,\pi')q \log (1/\delta) } 
            + \log (1/\delta) \right) \\
            & \ge \binom{n}{2}\alpha(\pi,\pi')q - C' \left( \sqrt{n^2 q \log (1/\delta) } 
            + \log (1/\delta) \right), 
        \end{align*}
        where $C'>0$ is a universal constant. 
        Choosing $\delta=e^{-2n\log n}$ 
        and $C$ to be a large enough constant, we get that 
        $
        \prob{\calF_{\pi'}} \ge 1-e^{-2n\log n}. 
        $
    \end{proof}

    \section{Possibility of Partial and Almost Exact Recovery}
    In this section, we prove the positive parts of \prettyref{thm:ER_dense} and \prettyref{thm:ER_sparse}.
    
    We first argue that we can assume $ps \le 1/2$ without loss of generality. 
    Note that the existence/absence of an edge is a matter of representation and they are mathematically equivalent. As a consequence, by flipping $0$ and $1$, the model with parameter $(n,p,s)$ is equivalent to that with parameter $(n, p', s')$ for an appropriate choice of $p'$ and $s'$ such that $p's'=1-ps$, 
    where   
          \begin{align}
          p'  = \frac{\left( 1-ps\right)^2}{1-2ps+ps^2} ,
          \quad      s' & = \frac{1-2ps+ps^2}{1-ps} 
          \, .  \label{eq:equivalent_model}
         \end{align}
    Therefore when $ps > \frac{1}{2}$, we can replace the model $(n,p,s)$ with $(n, p', s')$ so that $p's' = 1-ps < \frac{1}{2}$. 
   
   \medskip
    For any two permutations $\pi, \hpi\in \calS_n$, let $d(\pi, \hpi)$ denote the number of non-fixed points in the $ \pi' \circ \pi^{-1}$. 
    The following proposition provides sufficient conditions for $\piML$ defined in \prettyref{eq:ML}
    to achieve the partial recovery and almost exact recovery in \ER graphs.
    \begin{proposition}
    \label{prop:possibility_almost}
         Let $ps \le \frac{1}{2}$. If $p = 1-o(1)$, 
        \begin{align}
             \frac{n s^2(1-p)^2}{(1-ps)^2} \ge (4+\epsilon) \log n \label{eq:recovery_positive_cond_2}  \, ,
        \end{align}
        or if $p = 1-\Omega(1)$, 
        \begin{align}
            nps^2 \ge 
            \begin{cases}
                \frac{(2+ \epsilon ) \log n}{\log (1/p)-1+p } & \text{ if } p \ge n^{-\frac{1}{2}} \\
                4+ \epsilon & \text{ if } p < n^{-\frac{1}{2}}  
            \end{cases} \label{eq:recovery_positive_cond_1}\, ,
        \end{align}
        for any arbitrarily small constant $\epsilon >0$,
        there exists a constant $0<\delta<1$ such that 
        $$
        \prob{ d\left(\piML, \pi\right) < \delta n} \ge 1-n^{-1+o(1)},
        $$
        that is, $\piML$ achieves partial recovery.
        
        If in addition $nps^2(1-p)^2 = \omega(1) $, then for any constant $\delta>0$,
        $$
        \prob{ d\left(\piML, \pi\right) < \delta n} \ge 1-n^{-1+o(1)},
        $$
        that is, $\piML$ achieves almost exact recovery.
    \end{proposition}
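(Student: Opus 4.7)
The plan is a large-deviation analysis of the MLE: for each candidate $\hpi \neq \pi$ with $d(\pi,\hpi)=\ell$, I would upper-bound $\prob{\iprod{A^{\hpi}}{B} \geq \iprod{A^\pi}{B}}$ tightly enough that a union bound over such $\hpi$ across the relevant range of $\ell$ still decays. First, I would use the bit-flip equivalence \prettyref{eq:equivalent_model} to reduce to $ps \leq \tfrac12$. For a fixed $\hpi$, let $\sigma = \pi^{-1}\circ\hpi$, let $F$ be its fixed-point set with $|F|=n-\ell$, and let $\calO$ be the collection of edge orbits of $\sigmae$ with $\calO_1 = \binom{F}{2}$. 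As indicated in the discussion around \prettyref{eq:mgf_bound}, I center the objective difference:
\[
\iprod{A^{\hpi}-A^{\pi}}{B} = \bar X - \bar Y, \quad \bar X = \sum_{O \in \calO\setminus\calO_1} \bar X_O, \quad \bar Y = \sum_{O \in \calO\setminus\calO_1}\bar Y_O,
\]
with $\bar X_O = \sum_{(i,j)\in O}(A_{\hpi(i)\hpi(j)}-q)(B_{ij}-q)$, $\bar Y_O = \sum_{(i,j)\in O}(A_{\pi(i)\pi(j)}-q)(B_{ij}-q)$, and $q=ps$. Centering makes $\bar X_O$ and $\bar Y_O$ uncorrelated, allowing them to be controlled by separate Chernoff estimates.

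For $O \notin \calO_1$, the entry $A_{\hpi(i)\hpi(j)}$ is independent of $B_{ij}$, so $\Expect[\bar X_O]=0$; meanwhile $\Expect[\bar Y_O]=|O|\,ps^2(1-p)$, giving $\Expect[\bar Y]=m\,ps^2(1-p)$ for $m:=\binom{n}{2}-\binom{n-\ell}{2}$. Since $\bar Y$ is a sum of $m$ independent bounded terms with per-edge mean $ps^2(1-p)$ and variance $\Theta(ps^2)$, Bernstein yields $\bar Y \geq (1-o(1))\Expect[\bar Y]$ with probability $1-\exp(-\Omega(m\,ps^2(1-p)^2))$. For $\bar X$, independence across orbits makes $\Expect[e^{t\bar X}]$ a product of orbit MGFs, but within each orbit the entries $\{A_{\hpi(i)\hpi(j)}\}_{(i,j)\in O}$ exhibit a cyclic coupling through the ground-truth correspondence. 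The crucial ingredient is the orbit-size inequality
\[
M_k(t)\triangleq\Expect[e^{t\bar X_O}]\leq M_2(t)^{k/2}\qquad\text{for all }|O|=k\geq 2,
\]
which I would prove by unfolding this length-$k$ cycle via a transfer-matrix trace and reducing it to the length-two case by a Cauchy--Schwarz / log-convexity argument. Chernoff then gives $\prob{\bar X \geq \tau}\leq\inf_{t>0}e^{-t\tau}M_2(t)^{m/2}$.

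Setting $\tau=(1-o(1))\Expect[\bar Y]$, the per-$\hpi$ failure probability is at most $\exp(-I^\ast(p,s)\cdot m)$ for the optimized Chernoff rate $I^\ast$. Union-bounding over the $\leq n^\ell$ permutations with $\ell$ non-fixed points requires $I^\ast\cdot m \geq (1+\epsilon)\ell\log n$; since $m\geq(n-1)\ell/2$ this reduces to $n\cdot I^\ast(p,s)\geq(2+2\epsilon)\log n$. Careful analysis of $M_2(t)$ and optimization of $t$ in each of the three parameter regimes ($n^{-1/2}\leq p\leq 1-\Omega(1)$, $p<n^{-1/2}$, and $p=1-o(1)$) yields $I^\ast$ matching precisely the conditions \prettyref{eq:recovery_positive_cond_2} and \prettyref{eq:recovery_positive_cond_1}. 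Partial recovery then follows by summing the union bound over $\ell\geq \delta n$ for a suitable constant $\delta$. For almost exact recovery the range of $\ell$ must extend down to the smallest perturbation $\ell=2$ (a single transposition), and it is precisely there that the extra condition $nps^2(1-p)^2=\omega(1)$ is needed: at $\ell=2$ the signal-to-noise ratio $\Expect[\bar Y]^2/\Var(\bar Y)\asymp nps^2(1-p)^2$ must diverge for the Bernstein concentration of $\bar Y$ to remain valid.

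The main technical obstacle will be proving the MGF inequality $M_k(t)\leq M_2(t)^{k/2}$ with constants sharp enough to recover the precise thresholds in all three regimes, most delicately across the crossover at $p=n^{-1/2}$ where $M_2(t)$ transitions between a Poissonian tail (contributing the $\log(1/p)-1+p$ denominator in \prettyref{eq:recovery_positive_cond_1}) and a sub-Gaussian tail (producing the leading constant $4$). A secondary subtlety, and the point of departure from \cite{hall2020partial}, is pairing the Chernoff tilt $t$ optimally with $\tau$ so that the tilted law of $\bar X$ concentrates near $\Expect[\bar Y]$ rather than near $0$; a suboptimal tilt loses precisely the sharp constants that the proposition asserts.
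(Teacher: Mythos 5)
Your overall architecture matches the paper's: center the variables, decompose $\iprod{A^{\hpi}-A^{\pi}}{B}=\bar X-\bar Y$ over edge orbits, prove the transfer-matrix MGF bound $M_k\le M_2^{k/2}$, Chernoff-bound $\bar X$ with a union bound over the $\le n^{\ell}$ permutations, and Bernstein-bound $\bar Y$. However, there is a concrete gap in how you treat $\bar Y$. The quantity $\bar Y$ is not a single random variable: it depends on $\hpi$ through its fixed-point set $F$ (via $\calO_1=\binom{F}{2}$), so its concentration must itself be union-bounded over the $\binom{n}{\ell}\le e^{n h(\ell/n)}$ choices of $F$. This is the entire point of splitting $\bar X$ from $\bar Y$: the entropy cost for $\bar Y$ is only $n h(\ell/n)$ rather than $\ell\log n$, and one must choose the slack $\gamma$ in $\tau=(1-\gamma)\Expect[\bar Y]$ so that the Bernstein exponent $\asymp \gamma^2 \ell n\, ps^2(1-p)^2$ beats $n h(\ell/n)$ while keeping $\gamma$ small enough not to spoil the Chernoff rate for $\bar X$. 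Your write-up asserts $\bar Y\ge(1-o(1))\Expect[\bar Y]$ with a single Bernstein bound and never pays this entropy; with the stated conditions (e.g.\ $nps^2=\Theta(1)$ in the sparse case) the bound cannot survive a union bound over all $n^{\ell}$ permutations, so the accounting over $F$ is essential, and it is exactly this step that forces $\ell\ge\delta n$ and dictates which constants $\delta$ are attainable.

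Relatedly, your last paragraph misplaces where $nps^2(1-p)^2=\omega(1)$ enters. Almost exact recovery only requires ruling out $d(\piML,\pi)\ge\delta n$ for every \emph{constant} $\delta>0$; it does not require extending the analysis down to $\ell=2$. Indeed, controlling $\ell=2$ (transpositions) is the exact-recovery bottleneck and is provably impossible when $nps^2=o(\log n)$, so the plan of pushing the union bound to $\ell=2$ under the hypotheses of the proposition would be attempting to prove a false statement. The correct role of $nps^2(1-p)^2=\omega(1)$ is in the large-$\ell$ regime: for an arbitrarily small constant $\delta$ one needs $h(\delta)/\delta\lesssim \epsilon^2\, nps^2(1-p)^2$ so that the $\bar Y$ concentration (after the union bound over $F$) still works with $\gamma$ small; since $h(\delta)/\delta\to\infty$ as $\delta\to0$, accommodating all constants $\delta$ forces $nps^2(1-p)^2\to\infty$, whereas for partial recovery a single sufficiently large constant $\delta<1$ suffices. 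With these two corrections your argument aligns with the paper's proof of the proposition via its Lemma on $\prob{d(\piML,\pi)=k}$ for $k\ge\delta n$.
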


\begin{remark}
We explain how to 
prove the positive parts of \prettyref{thm:ER_dense} and \prettyref{thm:ER_sparse}
using~\prettyref{prop:possibility_almost}.
\begin{itemize}
    \item In the dense regime of $p=n^{-o(1)}$,
    either  \prettyref{eq:recovery_positive_cond_2} or  \prettyref{eq:recovery_positive_cond_1} already implies that $nps^2(1-p)^2 =\omega(1) $
    and hence the MLE achieves almost exact recovery under the condition \prettyref{eq:recovery_positive_cond_2} when $p= 1-o(1)$ or  $nps^2 \ge \frac{(2+ \epsilon ) \log n}{\log (1/p)-1+p }$ when $p= 1-\Omega(1)$; this proves the positive part of \prettyref{thm:ER_dense}. 
    \item In the sparse regime of $p=n^{-\Omega(1)}$, condition~\prettyref{eq:partial-sparse-positive} implies~\prettyref{eq:recovery_positive_cond_1} and hence the MLE achieves partial recovery; this proves the positive part of \prettyref{thm:ER_sparse}.
    Furthermore,  
    since $nps^2=\omega(1)$ implies~\prettyref{eq:recovery_positive_cond_1}, 
    the MLE achieves the almost exact recovery provided that $nps^2=\omega(1)$, which is in fact needed for any estimator to succeed \cite{cullina2019partial}. 
\end{itemize}
  \end{remark}

    To prove \prettyref{prop:possibility_almost}, we need the following intermediate lemma, which bounds the probability that the ML estimator \prettyref{eq:ML} makes a given number of errors.
    \begin{lemma}
    \label{lmm:ER_fixing_k}
      Let  $\epsilon \in (0,1)$ be an arbitrary constant and $ps \le \frac{1}{2}$. 
        \begin{itemize}
            \item For the \ER model, suppose that either \prettyref{eq:recovery_positive_cond_2} or \prettyref{eq:recovery_positive_cond_1} holds. Then there exists some constant $0< \delta <1$ such that for any $k\ge \delta n$,
        \begin{align}
			& \prob{ d\left(\piML, \pi\right)  = k}
			 \le  2 \exp\left( -  n h \left( \frac{k}{n}  \right) \right) \indc{k \le n-1} \nonumber\\
			&~~~~ + 
			e^{-2\log n} \indc{k=n}  + \exp\left( - \frac{1}{64}  \epsilon k \log n\right),
			\label{eq:ER_fixing_k}
		\end{align}
        where $h(x)=  - x \log x  - (1-x) \log (1-x)$ is the binary entropy function.
        
        If in addition $ n ps^2(1-p) = \omega(1) $, then \prettyref{eq:ER_fixing_k} holds
        for any constant $0<\delta<1 $ and all $k \ge \delta n.$

        
            \item For the Gaussian model, suppose that $n\rho^2 \ge (4+\epsilon) \log n$.  Then \prettyref{eq:ER_fixing_k} holds for any constant $0<\delta<1 $ and all $k \ge \delta n$.
        \end{itemize}
    \end{lemma}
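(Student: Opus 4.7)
My strategy is to refine the heuristic in \prettyref{sec:technique-negative-partial} into a careful union bound. By the definition of $\piML$,
\[
\prob{d(\piML,\pi)=k} \le \sum_{\hpi\in \calS_n:\, d(\pi,\hpi)=k}\prob{\iprod{A^{\hpi}}{B}\ge \iprod{A^{\pi}}{B}}.
\]
Setting $\sigma\triangleq \pi^{-1}\circ \hpi$ and invoking the edge-orbit decomposition from \prettyref{sec:orbit}, the objective gap $\iprod{A^{\hpi}-A^{\pi}}{B}$ splits as $X-Y$ (or, in the dense regime, as the centered $\overline X-\overline Y$ from \prettyref{sec:technique-negative-partial}). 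The essential asymmetry, already highlighted there, is that $Y=\sum_{O\in \calO\setminus \calO_1}Y_O$ depends on $\hpi$ only through its fixed-point set $F$ of size $n-k$, while $X=\sum_{O\in \calO\setminus \calO_1}X_O$ depends on the entire structure of $\hpi$.

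I will exploit this by separating the deviation: for a threshold $\tau$ close to $\Expect[Y]$,
\[
\{X-Y\ge 0\}\subseteq \{Y\le \tau\}\cup \{X\ge \tau\},
\]
and control each event separately. For $\prob{Y\le \tau}$, Bernstein's inequality applied to the sum of $\binom{|F|}{2}$ i.i.d.\ bounded summands gives a tail of order $\exp(-(1+o(1))nh(k/n))$ when $\tau$ is chosen appropriately; since there are at most $\binom{n}{n-k}\le \exp(nh(k/n))$ choices of $F$, a union bound over $F$ delivers the first term $2\exp(-nh(k/n))\indc{k\le n-1}$ of \prettyref{eq:ER_fixing_k} (the factor of $2$ absorbing the symmetric tail on $\overline Y$ in the centered case).

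For $\prob{X\ge \tau}$, I will apply a Chernoff bound, using the independence of $\{X_O\}_{O}$ across distinct edge orbits:
\[
\prob{X\ge \tau}\le e^{-t\tau}\prod_{O\in \calO\setminus \calO_1}\Expect\qth{e^{tX_O}}.
\]
The centerpiece is the MGF inequality $M_{|O|}\le M_2^{|O|/2}$ from \prettyref{eq:mgf_bound}, to be established by a case analysis along each edge orbit of $\sigmae$ (where the cyclic re-use of the pairs $(A_{\hpi(i)\hpi(j)},B_{ij})$ must be accounted for); this collapses the product into a quantity depending on $\sigma$ only through the total number of non-singleton orbit edges, which is fixed by $k$. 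Choosing the tilt $t=t^\star$ to match the threshold in \prettyref{eq:recovery_positive_cond_1} or \prettyref{eq:recovery_positive_cond_2} and combining the resulting per-$\hpi$ bound with the enumeration $\#\{\hpi:\,d(\pi,\hpi)=k\}\le \binom{n}{k}D_k$ will yield the $\exp(-\epsilon k\log n/64)$ term. For $k=n$, we have $F=\emptyset$, so $Y\equiv 0$ (or $\overline Y$ is deterministic), only the $X$-bound is needed, and the sharper count $D_n\le n!/e$ plus the tilt produces the isolated $e^{-2\log n}$ contribution.

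The hardest step will be establishing $M_{|O|}\le M_2^{|O|/2}$ with constants sharp enough to survive the enumeration in every parameter regime (sparse vs.\ dense, $p$ bounded away from $1$ vs.\ $p=1-o(1)$), and calibrating $t^\star$ so that the Chernoff exponent matches the hypothesized thresholds exactly. In the dense regime this forces the use of the centered $\overline X,\overline Y$: the pairwise uncorrelatedness of $\overline X_O$ and $\overline Y_O$ across orbits is precisely what allows the separate-control strategy to recover the sharp $(1-p)^2$ factor in \prettyref{eq:recovery_positive_cond_2}; without centering, the cross-term between $X$ and $Y$ would dominate and wash this factor out. A secondary technical issue is making the bound uniform for all $k\in [\delta n,n]$: for partial recovery this forces $\delta$ to be a small but fixed constant chosen so that the Chernoff exponent dominates $nh(k/n)$ throughout the range, whereas for almost exact recovery the additional hypothesis $nps^2(1-p)^2=\omega(1)$ relaxes this and permits any constant $\delta>0$.
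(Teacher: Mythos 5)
Your overall architecture is the same as the paper's: decompose the objective difference across the edge orbits of $\sigma=\pi^{-1}\circ\hpi$, split the error event as $\{Y\le\tau\}\cup\{X\ge\tau\}$, bound $\prob{Y\le\tau}$ by Bernstein (Hanson--Wright in the Gaussian case) with a union bound over the $\binom{n}{k}$ choices of the fixed-point set $F$, and bound $\prob{X\ge\tau}$ by a Chernoff bound using independence across orbits and $M_{\ell}\le M_2^{\ell/2}$, with a union bound over the at most $n^k$ permutations at distance $k$. (One cosmetic difference: the paper centers $A$ and $B$ uniformly in the \ER case—the identity $\iprod{A^{\pi}-A^{\hpi}}{B}=\sum_{i<j}\overline A_{\pi(i)\pi(j)}\overline B_{ij}-\sum_{i<j}\overline A_{\hpi(i)\hpi(j)}\overline B_{ij}$ always holds—and what actually changes between regimes is the choice of tilt, $t\le\log(1/p)$ when $q$ is small and $p$ is bounded away from $1$, versus $t\le 2^{-10}$ otherwise.)

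There is, however, a concrete error on the $Y$-side. After the edges in $\calO_1=\binom{F}{2}$ cancel, $Y$ is the ground-truth correlation summed over the $m=\binom{n}{2}-\binom{n-k}{2}\asymp kn$ pairs \emph{outside} $\binom{F}{2}$, each with mean $ps^2(1-p)$; it is not a sum of $\binom{|F|}{2}$ terms as you state. This count is essential to the calibration: the threshold is $\tau=(1-\gamma)\,m\,ps^2(1-p)$, and beating the entropy factor $e^{nh(k/n)}$ in the union bound over $F$ requires measuring the deviation against a mean of order $kn\,ps^2(1-p)$, which is precisely where \prettyref{eq:recovery_positive_cond_1}--\prettyref{eq:recovery_positive_cond_2} enter. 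The same confusion derails your $k=n$ case: when $F=\emptyset$ we have $\calO_1=\emptyset$, so $Y$ is the full sum over all $\binom{n}{2}$ pairs—far from zero—whereas the centered $X_{\hpi}$ has mean zero, so $\prob{X_{\hpi}\ge 0}$ is of constant order per permutation and no enumeration bound ($D_n\le n!/e$ or otherwise) can salvage $\sum_{\hpi}\prob{X_{\hpi}\ge 0}$. The term $e^{-2\log n}\indc{k=n}$ in \prettyref{eq:ER_fixing_k} in fact comes from the $Y$-side: at $k=n$ there is a single choice of $F$, and one picks $\gamma$ (resp.\ $a_n$ in the Gaussian case) so that the Bernstein/Hanson--Wright lower-tail bound on $Y$ is $e^{-2\log n}$, while the $X$-side is handled with the positive threshold $\tau\approx\binom{n}{2}ps^2(1-p)$ exactly as for smaller $k$. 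With the summand count corrected and the $k=n$ case treated this way, your plan coincides with the paper's proof.
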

    Note that \prettyref{lmm:ER_fixing_k} also includes the
    Gaussian case. In fact, analogous to~\prettyref{prop:possibility_almost}, 
    we can apply~\prettyref{lmm:ER_fixing_k} to 
    show that the MLE attains almost exact recovery 
    when $n\rho^2 \ge (4+\epsilon)\log n$. We will not do it here; instead in the next section, we will directly prove
    a stronger result, showing that the  MLE attains exact recovery
    under the same condition.

    Now, we proceed to prove \prettyref{prop:possibility_almost}.
    \begin{proof}[Proof of \prettyref{prop:possibility_almost}]
        Applying \prettyref{lmm:ER_fixing_k} with a union bound yields that 
        \begin{align}
            \prob{ d\left(\piML, \pi\right) \ge \delta n } 
            & \le \sum_{k \ge \delta  n}^{n }   \prob{ d\left(\piML, \pi\right)  = k} \nonumber \\
            & \le \exp\left(-2 \log n\right) + 2 \sum_{k \ge \delta  n}^{n-1 } 
            \exp\left( -  n h \left( \frac{k}{n}  \right) \right) +  
            \sum_{k \ge \delta  n } \exp\left( - \frac{1}{64}  \epsilon k \log n\right) \nonumber \\
            & = n^{-1+o(1)}, \label{eq:summation_k}
        \end{align}
        where the last inequality follows from 
        $\sum_{k \ge \delta  n } \exp\left( - \frac{1}{64}  \epsilon k \log n\right)
        \le  \frac{\exp\left( - \frac{\epsilon }{64} \delta  n \log n\right)}{1- \exp \left( -\frac{\epsilon}{64} \log n \right) }
        =n^{-\Omega(n)}$ for any fixed constant $\delta >0$, and 
        \begin{align*}
            \sum_{k \ge 1}^{n-1 } \exp\left( -  n h \left( \frac{k}{n}  \right) \right)
            \stepa{\leq} & ~ 2 \sum_{1 \leq k \leq n/2} \exp\left( -  k \log \frac{n}{k} \right) \\
            \leq & ~  2 \sum_{k=1}^{10 \log n} \exp\left( -  k \log \frac{n}{k} \right) +  2\sum_{10 \log n \leq k \leq n/2} 2^{ -  k} \\
            \leq & ~ 2  e^{-\log n} \times 10 \log n  + 4\times 2^{-10\log n} =n^{-1+o(1)},
        \end{align*}    
        where (a) follows from $h(x)=h(1-x)$ and $h(x) \geq x \log \frac{1}{x}$. 
    \end{proof}

    \subsection{Proof of \prettyref{lmm:ER_fixing_k}}
    \label{sec:pf-ER_fixing_k}
    Without loss of generality, we assume $\epsilon<1$. 
    Fix $k\in[n]$.  Let $\calT_k$ denote the set of permutations $\hpi$ such that $d(\pi, \hpi)=k$. 
    Recall that  $F$ is the set of fixed points of $\sigma \triangleq   \pi^{-1} \circ \hpi  $ with $|F|=n-k$ and $\calO_1=\binom{F}{2}$ is a subset of fixed points of edge permutation $\sigmae$. Let $\overline{A} = (\overline{A}_{ij})_{1\le i<j\le n} = (A_{ij}-\expect{A_{ij}})_{1\le i<j\le n}$ and $\overline{B} =  (\overline{B}_{ij})_{1\le i<j\le n} = (B_{ij}-\expect{B_{ij}})_{1\le i<j\le n}$ denote the centered adjacency vectors of $A$ and $B$ respectively. 
    It follows from the definition of MLE in \prettyref{eq:ML} that for
    any $\tau \in \reals$,
    \begin{align*}
        \left\{ d\left(\piML, \pi\right)  = k \right\} 
        &  \subset \left\{ \exists \hpi \in \calT_k : \left \langle A^{\pi}, B \right \rangle - \left \langle A^{\pi'}, B \right \rangle\le 0 \right\} \\
        &  \overset{(a)}{\subset} \left\{ \exists \hpi \in \calT_k : \sum_{i<j}  \overline{A}_{\pi(i)\pi(j)} \overline{B }_{ij}- \sum_{i<j}  \overline{A}_{\hpi(i)\hpi(j)} \overline{B}_{ij} \le 0 \right\} \\
        &= \left\{ \exists \hpi \in \calT_k : \sum_{(i,j) \notin \calO_1 } \overline{A}_{\pi(i)\pi(j)} \overline{B}_{ij}  -  \sum_{(i,j) \notin \calO_1 } \overline{A}_{\hpi(i)\hpi(j)} \overline{B}_{ij}  \le 0 \right\} \\
        & \subset \left\{ \exists \hpi \in \calT_k : \sum_{(i,j) \notin \calO_1 } \overline{A}_{\pi(i)\pi(j)}\overline{B}_{ij} < \tau\right\}
        \cup \left\{ \exists \pi \in \calT_k : \sum_{(i,j) \notin \calO_1 }  \overline{A}_{\hpi(i)\hpi(j)}\overline{B}_{ij} \ge \tau \right\},
    \end{align*}
    where $(a)$ holds because $\left \langle A^{\pi},B\right \rangle - \left \langle A^{\pi'},B\right \rangle = 
    \sum_{i<j}  \overline{A}_{\pi(i)\pi(j)} \overline{B }_{ij}- \sum_{i<j}  \overline{A}_{\hpi(i)\hpi(j)} \overline{B}_{ij}$. 
    Note that 
    $$
    \left\{ \exists \hpi \in \calT_k : \sum_{(i,j) \notin \calO_1 } \overline{A}_{\pi(i)\pi(j)} \overline{B}_{ij} < \tau\right\}
    =\left\{ \exists F \subset [n]: |F|=n-k, \; \sum_{(i,j) \notin \binom{F}{2} } \overline{A}_{\pi(i)\pi(j)} \overline{B}_{ij} < \tau \right\}.
    $$
    Thus, by the union bound,
    \begin{align*}
        \prob{ d\left(\piML, \pi\right)  = k} 
        & \le  \sum_{F\subset [n]: |F|=n-k} \prob{\sum_{(i,j) \notin \binom{F}{2} } \overline{A}_{\pi(i)\pi(j)}  \overline{B}_{ij} < \tau }  + \sum_{\hpi \in \calT_k} \prob{ 
            \sum_{(i,j) \notin \calO_1 }  \overline{A}_{\hpi(i)\hpi(j)} \overline{B}_{ij} \ge \tau }. 
    \end{align*}
    Let $Y = \sum_{(i,j) \notin \calO_1 } \overline{A}_{\pi(i)\pi(j)} \overline{B}_{ij} $ and  $X_\hpi=\sum_{(i,j) \notin \calO_1 }  \overline{A}_{\hpi(i)\hpi(j)}\overline{B}_{ij} $.
    Then
    \begin{align}
        \prob{ d\left(\piML, \pi\right)  = k} 
        \le \binom{n}{k} \prob{Y \le  \tau }  + \sum_{\hpi \in \calT_k} \prob{X_\hpi \ge \tau} \triangleq  \termI + \termII.  \label{eq:two_terms}
    \end{align}

    For $\termI$, we claim that for both the \ER random graph and Gaussian   model and appropriately  choice of $\tau$,
    \begin{align}
        \termI \le  \exp \left( - n h\left(\frac{k}{n} \right)\right) + \exp \left( -2 \log n\right) \indc{k=n} \label{eq:boundtermI} \, .
    \end{align}
    
    For $\termII$, applying the Chernoff bound together with the union bound yields that for any $t>0$,
    \begin{align}
        \termII
        \le \sum_{\hpi \in \calT_k } \prob{ X_\hpi \ge  \tau} 
        \le n^k \exp\left({-t \tau }\right) \expect{\exp\left({t X_\hpi}\right)}. \label{eq:large_k_upperbound_0}
    \end{align}
    Note that
    $$
    X_\hpi= \sum_{O\in  \calO \backslash \calO_1} \underbrace{\sum_{(i,j)\in O} \overline{A}_{\hpi(i)\hpi(j)} \overline{B}_{ij}}_{\triangleq \overline{X}_O},
    $$
    where $\calO$ denotes the collection of all edge orbits of $\sigma$.
    Since edge orbits are disjoint, it follows that $\overline{X}_O$ are mutually independent across different $O$. Therefore,
    \begin{align}
        \expect{ \exp\left({t X_\hpi}\right) } = \prod_{O\in  \calO \backslash \calO_1} \expect{\exp\left(t \overline{X}_O \right)}.\label{eq:x_pi_MGF}   
    \end{align}
    It turns out that the MGF of $\overline{X}_O$ can be explicitly computed for both \ER random graph and Gaussian   model.
    In particular, applying \prettyref{lmm:expect_exp_AB_pi} yields that $
    \expect{e^{t \overline{X}_O }} = M_{|O|},
    $
    and $ M_\ell \le M_2^{\ell/2} $ for $\ell \ge 2$. 
    It follows that         
    $$
    \expect{ \exp\left({t X_\hpi}\right) }
    \stepa{=} M_1^{n_2} \prod_{\ell=2}^{\binom{n}{2}} M_\ell^{N_\ell}
    \stepb{\le}  M_1^{n_2} \prod_{\ell=2}^{\binom{n}{2}} M_2^{\ell N_\ell /2}
    \stepc{=} \left( \frac{M_1^2}{M_2}\right)^{\frac{1}{2}n_2} M_2^{\frac{\binom{n}{2}-\binom{n_1}{2}}{2}}
    \stepd{=} \left( \frac{M_1^2}{M_2}\right)^{\frac{k}{2}} M_2^{\frac{m}{2}} \, , 
    $$
    where (a) follows from \prettyref{eq:x_pi_MGF} and $N_1 = \binom{n_1}{2}+n_2$ in view of \prettyref{eq:fixed_edge_node};
    (b) follows from $M_\ell \le M_2^{\ell/2}$ for $\ell \ge 2$;
    (c) follows from $\sum_{\ell = 1}^{\binom{n}{2}} \ell N_\ell = \binom{n}{2}$;
    (d) follows from $n_2\le k$, $n_1=n-k$, and $m \triangleq \binom{n}{2} - \binom{n-k}{2}$.
    Combining the last displayed equation with \prettyref{eq:large_k_upperbound_0} yields that 
    \begin{align*}
        \termII 
        & \le \exp \left( k \log n -t \tau + \frac{k}{2} \log  \frac{M_1^2}{M_2} + \frac{m}{2} \log M_2  \right) \\
        & \overset{(a)}{\le}  \exp \left( k \log n -t \tau + \frac{k}{2} \log  \frac{M_1^2}{M_2} + \frac{m}{2} \left( M_2 -1\right) \right) 
       \overset{(b)}{\le}  \exp \left( -\frac{\epsilon}{64} k \log n \right),
    \end{align*}
    where $(a)$ holds because $\log(1+x) \le x$ for any $x > -1$; $(b)$ follows from the claim that 
    \begin{align}
        \inf_{t\ge 0} 
        \left\{ -t \tau + \frac{k}{2} \log  \frac{M_1^2}{M_2} + \frac{m}{2} \left(M_2-1\right) \right\} 
        \le -\left( 1+ \frac{\epsilon}{64} \right) k \log n.
        \label{eq:boundtermII}
    \end{align}

    It remains to specify $\tau$ and 
    verify~\prettyref{eq:boundtermI} and~\prettyref{eq:boundtermII}
    for \ER and Gaussian models separately.
        In the following, we will use the inequality 
         \begin{align}
            \frac{1}{2}\left(1-\frac{1}{n}\right) kn\le m \le kn  \, ,  \label{eq:m}
         \end{align}
         which follows from  $ m = \binom{n}{2}- \binom{n-k}{2} = \frac{1}{2} kn \left(2- \frac{k+1}{n}\right)$.

    \begin{itemize}
        \item For \ER random graphs, $\overline{A} = (A_{ij} - q)_{1\le i<j \le n}$ and $\overline{B} = (B_{ij} - q )_{1\le i<j \le n}$, where $q = ps$. 
        Then for any $F \subset [n]$ with $|F|=n-k$, 
        $$
        Y = \sum_{(i,j) \notin \binom{F}{2} } (A_{\pi(i)\pi(j)}-q ) (B_{ij}-q) \, .
        $$
        Note that  $\expect{(A_{\pi(i)\pi(j)}-q ) (B_{ij}-q)} = ps^2(1-p)$ any $i<j$. 
        
        Let $\mu \triangleq \expect{Y}  = m  ps^2(1-p)$ and  set $\tau=(1-\gamma)\mu$ where
               
          \[
        \gamma \triangleq  \begin{cases}
            \sqrt{\frac{16 h(k/n)}{k ps^2(1-p)^2}} & k \le n-1 \\
            \sqrt{\frac{16 \log n}{n(n-1) ps^2(1-p)^2}} & k= n 
        \end{cases}  \, .
        \]
        We next choose a constant $0<\delta<1$  such that $ h\left(\delta\right)/\delta \le \frac{\epsilon^2}{2^{12}}nps^2 (1-p)^2$. Since $h(x)/x$ is monotone decreasing in $x$ and converges to $0$ as $x \to 1$, 
        under the condition~\prettyref{eq:recovery_positive_cond_2} or \prettyref{eq:recovery_positive_cond_1}, there exists some $0<\delta<1$ such that $ h\left(\delta\right)/\delta \le \frac{\epsilon^2}{2^{12}}nps^2(1-p)^2$. If further $nps^2(1-p)^2 = \omega(1)$
        then for any constant $0< \delta<1$, $h\left(\delta\right)/\delta \le \frac{\epsilon^2}{2^{12}}nps^2(1-p)^2$. 
        Hence, for all $\delta n \le k \le n-1$,  we have  $\frac{n}{k} h\left(\frac{k}{n}\right) \le  h\left(\delta\right)/\delta \le \frac{\epsilon^2}{2^{12}}nps^2(1-p)^2$, and then $\gamma \le  \sqrt{\frac{16 h(\delta)/\delta}{n ps^2(1-p)^2}} < \frac{\epsilon}{16}$; if $k=n$, since $nps^2(1-p)^2  = \Omega(1)$  under the condition~\prettyref{eq:recovery_positive_cond_2} or \prettyref{eq:recovery_positive_cond_1},
        $\gamma < \frac{\epsilon}{16} $ for all sufficiently large $n$. In conclusion, we have $\gamma< \frac{\epsilon}{16}$. 

        Applying the Bernstein inequality yields
        \begin{align}
        \prob{Y \le \tau} 
        \overset{(a)}{\le} \exp\left(-\frac{ \frac{1}{2}\gamma^2 \mu^2}{mps^2 +\frac{1}{3} \gamma \mu } \right) 
        & \overset{(b)}{\le} \exp\left(- \frac{1}{2+\frac{\epsilon}{24}}\gamma^2 \mu \left(1-p\right)\right)  \nonumber \\
        & \le \exp\left(-\frac{1}{4} \gamma^2 \mu (1-p)\right)\, ,  \label{eq:Y_tau}
        \end{align}
        where $(a)$ holds because for any $i<j$
        \begin{align*}
        \expect{(A_{\pi(i)\pi(j)}-q)^2 (B_{ij}-q)^2} 
        & = ps^2 \left(1-4ps+2p^2s+4p^2s^2-3p^3s^2\right) \\
        & \le ps^2 \left(1-2ps+4p^2s^2 \right) \le ps^2 \, ,
        \end{align*} 
        in view of $ps\le \frac{1}{2}$;
        $(b)$ holds because $mps^2 = \frac{\mu }{ 1-p}$ and 
        $\gamma < \frac{\epsilon}{16} $.
        By \prettyref{eq:m}, 
        we have
        \begin{align*}
            \gamma^2 \mu  (1-p)
            \ge \begin{cases}
                8 (n-1) h(k/n). & k \le n-1 \\
                8 \log n  & k = n  
            \end{cases} \, .
        \end{align*}
        Finally, applying \prettyref{eq:Y_tau} together with $\binom{n}{k} \le e^{n h\left(\frac{k}{n}\right)}$, 
        we arrive at the desired claim~\prettyref{eq:boundtermI}. 
        
       
        Next, we verify~\prettyref{eq:boundtermII} by choosing $t>0$ appropriately. Recall that
        $q = ps $. 
        In view of  \prettyref{eq:ER_expect_exp_AB_pi} in \prettyref{lmm:expect_exp_AB_pi}, 
         $M_2$ depends on $t$ and $q$. When $q$ is small and $p$ is bounded away from $1$, we will choose $t \le \log \frac{1}{p}$; otherwise, we will choose $t \le \frac{1}{2^{10}}$. As such, we bound $M_2$ using its Taylor expansion around $q=0$ and $t=0$, respectively, resulting in the following lemma. 
         \begin{lemma} \label{lmm:M_2}
        Assume $ps \le \frac{1}{2}$. 
        \begin{itemize}
            \item If $t \le \log \frac{1}{p}$, then $\frac{M_1^2}{M_2} \le e^2$, and
                \begin{align}
                    M_2 
                    & \le 1 
                     + q^2s^2 - 2q^2   + 10 q^3 t(1+t)  + 2 e^{t}q^2 (1-s^2) + e^{2t}  q^2s^2 
                   \, . \label{eq:M2_q_small}
        \end{align}
        \item If $t \le \frac{1}{2^{10}}$, then $\frac{M_1^2}{M_2} \le e^2$,  and
         \begin{align}
           M_2 \le 1 + t^2 \sigma^4 \left(1+ \rho^2 \right) +  8 t^3qs    \, , \label{eq:M2_t_small}
         \end{align}
         where $\rho = \frac{s-q}{1-q}$ and  $\sigma^2 = q(1-q)$.
        \end{itemize}
        \end{lemma}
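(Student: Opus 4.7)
The plan is to prove Lemma~\ref{lmm:M_2} by direct computation of $M_1$ and $M_2$ followed by Taylor expansions tailored to the two regimes. Enumerating the four values of $(U,V)\in\{0,1\}^2$ under $P$ yields the closed form
\[
M_1 = qs\, e^{t(1-q)^2} + 2q(1-s)\, e^{-tq(1-q)} + (1-2q+qs)\, e^{tq^2}.
\]
For $M_2=\E[\exp(t(\bar{U}_2\bar{V}_1+\bar{U}_1\bar{V}_2))]$ with independent pairs $(U_i,V_i)\sim P$, I would condition on $(U_1,U_2)$ and exploit the conditional independence of $V_1\mid U_1$ and $V_2\mid U_2$ (with $V_i\mid U_i=1\sim\Bern(s)$ and $V_i\mid U_i=0\sim\Bern(\eta)$, where $\eta=q(1-s)/(1-q)$) to factor the conditional MGF. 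Summing over the four cases of $(U_1,U_2)$ expresses $M_2$ as a sum of twelve terms, each a product of two of the three exponentials $e^{t(1-q)^2}$, $e^{-tq(1-q)}$, $e^{tq^2}$.

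For Part~1 ($t\le\log(1/p)$), the elementary bounds $tq^2\le ps\cdot p\log(1/p)\le 1/(2e)$ and $tq(1-q)\le ps\log(1/p)\le 1/(2e)$ allow me to Taylor expand $e^{tq^2}=1+tq^2+O((tq^2)^2)$ and $e^{-tq(1-q)}=1-tq(1-q)+O((tq)^2)$, while keeping $e^{t(1-q)^2}\le e^t\le 1/p$ as an exact factor. Collecting terms in the expansion of $M_2$ by order in $q$ and by power of $e^t$ produces \eqref{eq:M2_q_small}: the $e^{2t}q^2s^2$ and $2e^t q^2(1-s^2)$ pieces arise from the extremal configurations $(U_1=V_1=U_2=V_2=1)$ and those with exactly one $V_i=1$, respectively, while $10q^3t(1+t)$ absorbs all third-order corrections. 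The ratio bound $M_1^2/M_2\le e^2$ follows from $M_2\ge 1$ (by Jensen, since $\E[Z]=0$) combined with the direct estimate $M_1\le qs\cdot e^t+2q+e^{1/(2e)}\le e$.

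For Part~2 ($t\le 2^{-10}$), since $|Z|\le 2$ and hence $|tZ|\le 2^{-9}\le 1/2$, the inequality $e^{tZ}\le 1+tZ+\tfrac{t^2}{2}Z^2+\tfrac{1}{3}|tZ|^3$ applies. Taking expectations, $\E[Z]=0$ by pair-independence; $\E[Z^2]=2\E[\bar{U}^2]\E[\bar{V}^2]+2(\E[\bar{U}\bar{V}])^2=2\sigma^4(1+\rho^2)$, using $(\E[\bar{U}\bar{V}])^2=(q(s-q))^2=\sigma^4\rho^2$; and $\E[|Z|^3]\le 8\,\E[|\bar{U}|^3]\E[|\bar{V}|^3]\le 8q^2\le 8qs$ via $(a+b)^3\le 4(a^3+b^3)$, the bound $\E[|\bar{U}|^3]\le q(1-q)\le q$, and $s\ge q$. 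Combining these yields \eqref{eq:M2_t_small}, and since both $M_1^2$ and $M_2$ equal $1+O(t)$ for $t\le 2^{-10}$, the ratio bound $M_1^2/M_2\le e^2$ is immediate.

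The main technical difficulty is the combinatorial bookkeeping needed to reduce the twelve-term expansion of $M_2$ into the compact form of \eqref{eq:M2_q_small}, in particular, verifying that the constant $10$ in the remainder absorbs every third-order contribution across the four Bernoulli configurations of $(U_1,U_2)$ and the associated $\{\Bern(s),\Bern(\eta)\}$ mixtures for $V_1,V_2$.
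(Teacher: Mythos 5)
Your treatment of the second bullet ($t\le 2^{-10}$) is correct and takes a genuinely different, cleaner route than the paper. The paper obtains \eqref{eq:M2_t_small} from the closed form $M_2=T^2-2D$ by third-order Taylor expansion of the three exponentials and a lengthy tracking of coefficients $\gamma_1,\dots,\gamma_6$; you instead write $M_2=\Expect[e^{tZ}]$ with $Z=\bar U_2\bar V_1+\bar U_1\bar V_2$, use $e^{x}\le 1+x+\tfrac{x^2}{2}+\tfrac{|x|^3}{3}$ for $|x|\le\tfrac12$, and compute $\Expect[Z]=0$, $\Expect[Z^2]=2\sigma^4(1+\rho^2)$ (the cross term factoring as $(q(s-q))^2=\sigma^4\rho^2$), and $\Expect[|Z|^3]\le 8\,\Expect[|\bar U|^3]\,\Expect[|\bar V|^3]\le 8q^2\le 8qs$. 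This gives $M_2\le 1+t^2\sigma^4(1+\rho^2)+\tfrac{8}{3}t^3qs$, which is even slightly stronger than \eqref{eq:M2_t_small}, and the ratio bound there is indeed immediate from $M_2\ge 1$ (Jensen) and $M_1\le e^{t}$. This is a nice simplification of the paper's argument for that regime.

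The first bullet, however, has a genuine gap: your plan for \eqref{eq:M2_q_small} is exactly the paper's plan (expand $e^{tq^2}$ and $e^{-tq(1-q)}$ to second order, keep $e^{t(1-q)^2}$ exact, collect terms), but all of the content of the inequality lies in the bookkeeping you explicitly defer, namely that the leading terms assemble into $1-2q^2+q^2s^2+2e^{t}q^2(1-s^2)+e^{2t}q^2s^2$ and that \emph{every} remaining contribution is absorbed by $10q^3t(1+t)$. The paper's Appendix B spends essentially all of its length verifying this, bounding the coefficients of $t,t^2,t^3,t^4$ by $6q^3$, $6q^3$, $4q^4$, $2q^5$ respectively (using $q\le\tfrac12$, $e^{t}q\le 1$, $qt^2\le 1$, $q^2t^3\le 1$ for $t\le\log\tfrac1p$); asserting that the constant $10$ works without carrying out this reduction leaves the key estimate unproved. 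In addition, your stated chain for $\frac{M_1^2}{M_2}\le e^2$ in this regime is numerically false as written: bounding $qse^{t}\le 1$ and $2q\le 1$ gives $qse^{t}+2q+e^{1/(2e)}>3>e$. The conclusion is still salvageable if you retain the factor $(1-s)$, since $M_1\le s^2+2ps(1-s)+e^{s^2/(2e)}\le 1+e^{1/(2e)}<e$, or by arguing as the paper does via the exact identity $M_1^2=T^2=M_2+2D$ together with $M_2\ge 1$ and $D\le 3$, but as presented that step does not go through.
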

        The proof of \prettyref{lmm:M_2} is deferred to \prettyref{app:M_2}. 
        With \prettyref{lmm:M_2}, we separately consider two cases, depending on whether $q$ is small and $p$ is bounded away from $1$. 
        Pick $q_0$ and $c_0$ to be sufficiently small constants such that 
         \[ 
            c_0 \le \frac{\epsilon}{2^{13}}, \quad q_0  \le  \frac{\epsilon}{480} \phi\left(1-c_0\right)
            \,  , 
        \]
        where for any $x \in (0,1)$,  we define
        \begin{align}
        \phi\left(x\right) \triangleq 
          \frac{\log \frac{1}{x} -1+x }{x \left(\log \frac{1}{x}\right) \left(1+ \log \frac{1}{x} \right) } \,  \label{eq:phix}
        \end{align} 
This function is monotonically decreasing on $(0,1)$ (see \prettyref{app:phix} for a proof). 
        
        Then we separately consider two cases.
        
        \textbf{Case 1}: $q \le q_0$ and $p \le 1- c_0$.  
        Here we will pick 
        $t \le  \log \frac{1}{p}$. Then, by \prettyref{eq:M2_q_small} in \prettyref{lmm:M_2}, 
        \begin{align*}
         -t \tau + \frac{k}{2} \log  \frac{M_1^2}{M_2} + \frac{m}{2} \left( M_2 -1 \right) 
        & \overset{(a)}{\le} 
         - t (1-\gamma) mps^2(1-p) + k + \frac{m}{2} \left(M_2-1\right) \\
        & \overset{(b)}{\le} k +  \frac{1}{2} m p s^2 \left( f(t) + 10 p^2 s t(1+t)\right)  \, , 
        \end{align*}
        where
        \[
        f(t) \triangleq  -2 \left(1 - \gamma  \right) t + e^{2t} p s^2 + 2 e^t p(1-s^2)+ ps^2 -2 p  \,;
        \]
        $(a)$ holds by $\frac{M_1^2}{M_2} \le e^2$; $(b)$ holds by \prettyref{eq:M2_q_small}.
        Note that for $t \le \log\frac{1}{p}$, 
       \begin{align}
          10 p^2 st(1+t) \le 10 q p \left(\log \frac{1}{p} \right) \left(1+\log \frac{1}{p} \right) \le \frac{\epsilon}{48} \left( \log \frac{1}{p} -1+ p\right)  \, , \label{eq:extra_term}
       \end{align}
       where the last inequality  holds because
       $q\le q_0 \le \frac{\epsilon}{480} \phi (1-c_0) \le \frac{\epsilon}{480} \phi (p)$, and  $\phi(x)$ is a monotone decreasing function on $x \in (0,1)$ by \prettyref{lmm:phix}. 
        
        Therefore, to verify~\prettyref{eq:boundtermII}, it suffices to check
        \begin{align}
            \frac{1}{2}   mps^2\left(
            f(t)    + \frac{\epsilon}{48} \left( \log \frac{1}{p} -1+ p\right) \right)\le -\left( 1+ \frac{\epsilon}{64} \right) k \log n -k. 
            \label{eq:boundtermII_2}
        \end{align}
       To this end, we need the following simple claim. 
       \begin{claim} \label{clm:claim1}
       Suppose $nps^2\left(\log \frac{1}{p} -1 +p\right) \ge \left(2+\epsilon \right)\log n$. Then, for all sufficiently large $n$ and all $0<p \le 1-c_0$, 
       \begin{align}
            \gamma (1-p) 
             \le \frac{\epsilon}{48} \left(\log\frac{1}{p}-1+p\right) \, . \label{eq:delta_1-p} 
         \end{align}
       \end{claim}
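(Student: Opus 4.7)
The plan is to verify the claim by direct computation after squaring, once the definition of $\gamma$ is unpacked. From the piecewise definition, for $k \le n-1$ one has $\gamma^2(1-p)^2 = 16\,h(k/n)/(kps^2)$, and for $k = n$ one has $\gamma^2(1-p)^2 = 16\log n/(n(n-1)ps^2)$. The $(1-p)^2$ factor cancels cleanly on both sides, so the target reduces to bounding these two expressions by $(\epsilon/48)^2(\log(1/p)-1+p)^2$. Observe that no $(1-p)^{-1}$ ever appears, which is why the estimate will not degenerate even as $p$ approaches $1$.

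Next I would extract the single consequence of $p \le 1-c_0$ that we need: writing $f(p) \triangleq \log(1/p)-1+p$, one has $f'(p) = 1 - 1/p < 0$ and $f(1) = 0$, so $f$ is strictly decreasing on $(0,1]$ and hence $f(p) \ge f(1-c_0) \triangleq c_1 > 0$ for all $p \le 1-c_0$. For the main case $k \le n-1$, I would apply the trivial entropy bound $h(k/n) \le \log 2$ together with $k \ge \delta n$ to obtain $\gamma^2(1-p)^2 \le 16\log 2 / (\delta n p s^2)$. Plugging in the hypothesis $nps^2 \ge (2+\epsilon)\log n / f(p)$ then yields
\[
\gamma^2(1-p)^2 \;\le\; \frac{16\log 2 \cdot f(p)}{\delta(2+\epsilon)\log n},
\]
and since $f(p) \ge c_1$, the right-hand side is bounded by $(\epsilon/48)^2 f(p)^2$ as soon as $\log n$ exceeds a constant depending only on $\delta, \epsilon, c_0$, which is exactly the ``for all sufficiently large $n$'' conclusion.

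The $k = n$ case is parallel: applying the hypothesis to $\gamma^2(1-p)^2 = 16 \log n/(n(n-1)ps^2)$ cancels $\log n$ and leaves $16 f(p)/((2+\epsilon)(n-1))$, again dominated by $(\epsilon/48)^2 f(p)^2$ once $n$ is large, using $f(p) \ge c_1$. There is no serious obstacle in the argument; the one subtlety---almost a trap---is to refrain from using the weaker consequence $1-p \ge c_0$ to close the gap between the two sides, which would pair awkwardly with $f(p)$ and lose a power of $1-p$. Going through $f(p) \ge c_1$ directly keeps the matching powers of $f(p)$ on both sides and makes the estimate robust.
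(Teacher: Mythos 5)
Your proof is correct and follows essentially the same route as the paper: cancel the $(1-p)$ factor, bound $h(k/n)/k$ by a constant over $n$ using $k\ge\delta n$, insert the hypothesis $nps^2\ge (2+\epsilon)\log n/(\log\tfrac1p-1+p)$, and then use the monotonicity of $\log\tfrac1x-1+x$ to get $\log\tfrac1p-1+p\ge \log\tfrac{1}{1-c_0}-c_0>0$, so the slack is absorbed for large $n$ uniformly in $p$. The only cosmetic difference is that you square and use $h(k/n)\le\log 2$ where the paper uses $\tfrac nk h(k/n)\le h(\delta)/\delta$; this changes nothing essential.
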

        We defer the proof of \prettyref{clm:claim1}  and first finish the proof of~\prettyref{eq:boundtermII_2}, 
        by choosing different values of $t \in [0, \log \frac{1}{p}]$ according to different cases.
        \begin{itemize}
        \item  {\bf Case 1(a)}:  $n p s^2  \left(\log (1/p)-1+p \right)\ge (4+\epsilon )\log n$.
        We pick 
        $ t = \frac{1}{2} \log \frac{1}{p}$. 
        Then 
        \begin{align*}
            f(t) 
            & =  - (1-\gamma)\log \frac{1}{p} + s^2\left(\sqrt{p}-1\right)^2 + 2\sqrt{p}- 2 p\\
            & \overset{(a)}{\le}  -(1-\gamma)  \left(\log \frac{1}{p} + 1- p  \right) + \gamma (1-p)\\
            & \overset{(b)}{\le} -  \left(1- \gamma - \frac{\epsilon}{48} \right) \left( \log \frac{1}{p} - 1 + p \right),
        \end{align*}
         where $(a)$ holds because $s^2 \le 1$; $(b)$ holds by \prettyref{eq:delta_1-p} in \prettyref{clm:claim1}.
      Hence, 
        \begin{align*}
            \frac{1}{2} mps^2 \left( f(t) + \frac{\epsilon}{48} \left( \log \frac{1}{p} - 1 + p \right) \right)
            & \le - \frac{1}{2} mps^2  \left(1- \gamma -\frac{\epsilon}{24} \right) \left( \log \frac{1}{p} - 1 + p \right)\\
            & \le - \left(1+ \frac{\epsilon}{4} \right)  \left(1- \frac{1}{n} \right) \left(1- \gamma -\frac{\epsilon}{24}   \right) k \log n. 
        \end{align*}
      where the last inequality follows from  \prettyref{eq:m}
      and our assumption $n p s^2  \left(\log (1/p)-1+p \right)\ge (4+\epsilon )\log n$.

        \item  {\bf Case 1(b)}: $  (2+ \epsilon )\log n \le n p s^2 \left(\log (1/p)-1+p \right) \le (4+ \epsilon )\log n$ and $p \ge n^{-\frac{1}{2}}$. 
         We pick $t = \log \frac{1}{p}$ and get 
        \begin{align*}
            f(t) & \le -2 \left(1-\gamma \right) \log \frac{1}{p} + \frac{s^2}{p}  + 2(1-s^2) + ps^2 -2 p  \\
            & = -2 \left(1- \gamma \right) \left(  \log \frac{1}{p}  - 1 + p\right) + \frac{s^2}{p} + 2 \gamma \left(1- p\right) + s^2(p-2)\\
            & \overset{(a)}{\le}  - 2 \left(1- \gamma  - \frac{\epsilon}{48}  \right)\left(  \log \frac{1}{p}  - 1 + p\right) +  2 \gamma  \left(1-p\right) \\
            & \overset{(b)}{\le}  - 2 \left(1-  \gamma - \frac{\epsilon}{24} \right)\left(  \log \frac{1}{p}  - 1 + p\right),
        \end{align*}
       where $(a)$ holds because $ \frac{s^2}{p}  \le \frac{(4+\epsilon)\log n}{n p^2 \left(\log (1/p)-1+p \right)}\le \frac{\epsilon}{48}\left(\log (1/p)-1+p \right)$ for all sufficiently large $n$ 
        when  $ n^{-\frac{1}{2}} \le p \le 1- c_0 $; $(b)$ holds in view of \prettyref{eq:delta_1-p} in \prettyref{clm:claim1}.
        Then under the assumption that $nps^2 \left(\log (1/p)-1+p \right) \ge (2+\epsilon) \log n$,  and by \prettyref{eq:m}, 
        \begin{align*}
            \frac{1}{2} m  ps^2  \left( f(t) + \frac{\epsilon}{48} \left( \log \frac{1}{p} - 1 + p \right) \right)
            & \le -\left( 1 + \frac{\epsilon}{2} \right) \left(1-\frac{1}{n}\right)\left(1 - \gamma- \frac{\epsilon}{16}\right) k \log n . 
        \end{align*}

        \item {\bf Case 1(c)}:  $n ps^2 \ge 4 + \epsilon $ and $p<n^{-\frac{1}{2}}$. 
        We pick $t = \frac{1}{2} \log \frac{1}{ps^2} \le \log \frac{1}{p}$, where the inequality follows because $s^2 \ge p $, in view of $n ps^2 \ge 4 + \epsilon $ and $p<n^{-\frac{1}{2}}$. Then
        \begin{align*}
            f(t)
            & = - (1-\gamma) \log \frac{1}{ps^2} + 1 + 2 \sqrt{\frac{p}{s^2}}(1-s^2)-2p+ps^2  \\
            &\le - (1-\gamma) \log \frac{1}{ps^2} + 1 + 2 \sqrt{\frac{p}{s^2}} \\
            & \le- \left(1-\gamma - \frac{\epsilon}{48}\right) \log \frac{1}{ps^2} \, ,
        \end{align*}
        where the last inequality holds for $n$ sufficiently large because $\frac{p}{s^2}  \le \frac{np^2}{4+\epsilon} \le \frac{1}{4+\epsilon}$ and 
        $\log \frac{1}{ps^2} \ge \frac{1}{2} \log n$ when $nps^2 \ge 4+\epsilon$ and $p < n^{-\frac{1}{2}}$.  
        Then, 
        \begin{align*}
            \frac{1}{2} m  ps^2  \left( f(t) + \frac{\epsilon}{48} \left( \log \frac{1}{p} - 1 + p \right) \right)
            & \overset{(a)}{\le} - \frac{1}{4}\left(1- \frac{1}{n}\right)\left(1-\gamma - \frac{\epsilon}{24}\right)  k n ps^2 \log \frac{1}{ps^2} \\
            & \overset{(b)}{\le} - \left(1+ \frac{\epsilon}{4} \right) \left(1- \frac{1}{n}\right)\left(1-\gamma  - \frac{\epsilon}{24} \right) k \log \frac{n}{4+\epsilon} \, ,
        \end{align*}
        where $(a)$ holds by \prettyref{eq:m} and
        $
        \log \frac{1}{p} - 1+p \le \log  \frac{1}{ps^2} ; 
        $
        $(b)$ holds due to $nps^2 \ge 4+\epsilon$. 

        \end{itemize}
        Hence, in view of $ \gamma < \frac{\epsilon}{16}$ for $\delta n \le k \le n$,  combining the three cases, we get
        \begin{align*}
            \frac{1}{2} mps^2\inf_{ 0 \le t \le \log (1/ps^2) } f(t)
            & <   - \left(1+\frac{\epsilon }{4} \right) \left(1- \frac{1}{n}\right)\left(1 - \frac{\epsilon}{8}\right) k \log \frac{n}{4+\epsilon}  \\
            & \le   - \left( 1+ \frac{\epsilon}{64} \right) k \log n  - k ,
        \end{align*}
        where the last inequality holds for all sufficiently large $n$.
        Thus we arrive at the desired~\prettyref{eq:boundtermII_2} which further implies~\prettyref{eq:boundtermII}.
    
    Now we are left to prove~\prettyref{clm:claim1}. 
      \begin{proof}[Proof of \prettyref{clm:claim1}]
           Note that for any $0< p \le 1-c_0$, 
        \begin{align*}
        \gamma(1-p) \stepa{\le}  \sqrt{\frac{16 h(\delta)/\delta}{ nps^2}} 
        \stepb{\le} \sqrt{\frac{8 h ( \delta )/\delta \left(\log \frac{1}{p}-1+p\right)}{ \log n}} 
        & \overset{(c)}{\le}  \sqrt{\frac{16 h ( \delta )/\delta }{  \left(\log \frac{1}{1-c_0} - c_0\right) \log n}}\left(\log \frac{1}{p}-1+p\right) \\
        &  \overset{ (d)}{\le}  \frac{\epsilon}{48} \left(\log \frac{1}{p}-1+p\right)  \, , 
        \end{align*}
        where $(a)$ holds because if $\delta n \le k \le n-1$, $h(x)/x$ is a monotone decreasing function on $0< x \le 1$ and $\frac{n}{k} h\left(\frac{k}{n}\right) \le  h\left(\delta\right)/ \delta$, 
        and if $k=n$, $\frac{\log n}{n-1} \le   h(\delta)/\delta $ for $n$ sufficiently large; $(b)$ holds by assumption $ nps^2 \left(\log \frac{1}{p} - 1+ p\right) \ge (2 +\epsilon) \log n$; 
        $(c)$ holds because $\log \frac{1}{x} -1+x$ is monotone decreasing function on $x\in (0,1)$ and we have $p \le 1-c_0$; $(d)$ holds for $n$ sufficiently large. 
        Hence, the claim~\prettyref{eq:delta_1-p} follows. 
\end{proof}
        \textbf{Case 2}: $q \ge q_0$ or $p \ge 1 -c_0$.        
        Here we will pick 
        $t \le \frac{1}{2^{10}}$.
        Then, by \prettyref{lmm:M_2}, we get
        \begin{align*}
            -t \tau + \frac{k}{2} \log  \frac{M_1^2}{M_2} + \frac{m}{2} \left( M_2 -1\right)   
            &  \overset{(a)}{\le} k 
            -t \tau + \frac{m}{2} \left( M_2 -1\right)    \\
            & \overset{(b)}{\le}  k + m f(t),
        \end{align*}
        where 
        \[
        f(t) \triangleq   -  
        t \rho \sigma^2 \left( 1 - \frac{\epsilon}{16} \right) +  
        \frac{1}{2} \left( M_2 -1\right) \, ;
        \]
        $(a)$ holds by $\frac{M_1^2}{M_2} \le e^2$, and \prettyref{eq:M2_t_small} given $t \le \frac{1}{2^{10}}$; $(b)$ holds because $ \tau \ge \rho \sigma^2 m \left( 1-\frac{\epsilon}{16} \right) $, in view of $\tau =  (1-\gamma) mps^2(1-p) $, $\rho = \frac{s(1-p)}{1-ps}$, $\sigma^2 = ps(1-ps)$, and $\gamma \le \frac{\epsilon}{16}$. 
        
        Therefore, to verify~\prettyref{eq:boundtermII}, it suffices to check 
        \begin{align}
           m  f(t)   \le -\left( 1+ \frac{\epsilon}{64} \right)k  \log n - k \, . 
            \label{eq:ER_boundtermII_2}
        \end{align}
        \begin{itemize}
            \item  {\bf Case 2(a)}: $p > 1-c_0$.
            We pick $t = \frac{\rho}{\sigma^2\left(1+\rho^2\right)} =  \frac{(1-p)}{p(1-ps) (1+\rho^2)} \overset{(a)}{<} \frac{2 c_0}{1-c_0} \le \frac{1}{2^{10}}$, where $(a)$ holds in view of $p > 1-c_0$ and $ps \le \frac{1}{2}$. By \prettyref{eq:M2_t_small},
            \[
            M_2 \le 1+ \frac{\rho^2}{1+\rho^2}  +  \frac{8 \rho^3 qs}{\sigma^6 (1+\rho^2)^3} \le 1 + \frac{\rho^2}{1+\rho^2} \left(1+ \frac{\epsilon}{16}\right)\, ,
            \]
            where the last inequality holds because $\frac{8 \rho qs}{\sigma^6 (1+\rho^2)^2}  = \frac{8 (1-p) }{p^2(1-ps)^4 (1+\rho^2)^2} \le \frac{\epsilon}{32} $ given
            $1- p \le c_0 \le \frac{\epsilon}{2^{13}}$ and $ ps \le \frac{1}{2}$.
            Then, we have
            \begin{align*}
                f(t) 
                & \le  - \frac{\rho^2}{1+\rho^2}  \left(  1 - \frac{\epsilon}{16}  - \frac{1}{2 }
                \left(1+ \frac{\epsilon}{32}\right)\right)\\
                & =  - \frac{\rho^2}{1+\rho^2} \left(\frac{1}{2}-\frac{5 \epsilon}{64}\right) \, .
            \end{align*}   
            Therefore, 
            \begin{align*}
                m f(t) 
                & \overset{(a)}{\le} - \frac{1}{2} \left(1-\frac{1}{n}\right) k n \left(\frac{ \rho^2}{1+\rho^2}\right) \left(\frac{1}{2}-\frac{5 \epsilon}{64}\right) \overset{(b)}{\le} -\left( 1+ \frac{\epsilon}{64} \right)k  \log n - k \, ,
            \end{align*}
            where $(a)$ holds by \prettyref{eq:m}; $(b)$ holds under the assumption that $ \rho^2  = \frac{s^2(1-p)^2}{(1-ps)^2}\ge (4+\epsilon) \frac{\log n}{n}$ in \prettyref{eq:recovery_positive_cond_2} for $n$ sufficiently large.
            
            \item \textbf{Case 2(b)}: $q \ge q_0$ and $p \le 1- c_0$.
            We pick $t = \frac{4 \log n}{ \rho \sigma^2 n}  =\frac{4\log n}{ps^2(1-p) n} \le \frac{1}{2^{10}}$, where the last inequality holds for $n$ sufficiently large. 
            By \prettyref{eq:M2_t_small}, 
            \begin{align*}
                 M_2 \le 1+  t^2 \sigma^4 \left(1+ \rho^2 + \frac{8 t qs}{\sigma^4 (1+\rho^2) }\right) 
                 & \le 1 +t^2 \sigma^4 (1+\rho^2) \left(1+\frac{\epsilon}{64}\right)\, ,
            \end{align*}
            where the last inequality holds because $ \frac{8 t qs}{\sigma^4 (1+\rho^2)^2 } = \frac{32 \log n }{ (1-p) q^2 (1-q)^2 \left(1+\rho^2\right)^2 n } \le \frac{\epsilon}{64} $ 
            for $n$ sufficiently large, given $ q_0 \le q \le \frac{1}{2}$ and $p \le 1- c_0$.  
            Then, we have
            \begin{align*}
                f(t) 
                & \le - \frac{4 \log n}{n} \left(1-\frac{\epsilon}{32}\right) + \frac{8 \log^2 n}{ n^2} \frac{1+\rho^2}{\rho^2} \left(1+\frac{\epsilon}{64}\right) \\
                & \le - \frac{4 \log n}{n} \left(1-\frac{\epsilon}{32}\right) + \frac{32 \log^2 n}{ \rho^2 n^2} \\
                & \le - \frac{3 \log n}{n} \, ,
            \end{align*}
            where the last inequality holds for $n$ sufficiently large because
            $\rho = \frac{s(1-p)}{1-ps} \ge \frac{q_0 c_0}{1-q_0} $.
            Therefore,  
            \begin{align*}
            m f(t) 
            & \overset{(a)}{\le} -  \frac{3}{2}  \left(1-\frac{1}{n}\right)  k \log n  
            \overset{(b)}{\le}  - \left( 1+ \frac{\epsilon}{64} \right) k \log n - k \, ,
            \end{align*}
            where $(a)$ holds by \prettyref{eq:m};
            $(b)$ holds for $n$ sufficiently large.
            \end{itemize}

      
        \item 
        For Gaussian   model,  set $\tau = \rho m - a_k$, where  
        \[
        a_k = \begin{cases}
            C \sqrt{ 2  h\left(\frac{k}{n} \right) n m} & k \le n-1\\
            C n \sqrt{\log n} & k=n
        \end{cases} \,
        \]
        for some universal constant $C>0$ to be specified later. 
       Recall that  $h(x)/x$ is monotone decreasing in $x$ and converges to $0$ as $x \to 1$,  
        under the condition~$n\rho^2\ge (4+\epsilon)\log n$, for any constant $0< \delta <1$, we have
        $ h\left(\delta \right)/ \delta \le \frac{\epsilon^2  }{2^{13}C^2} n \rho^2$ when $n$ is sufficiently large. 
        Therefore, for $\delta n \le k \le n-1$,   
        $\frac{n}{k} h\left(\frac{k}{n}\right) \le  h\left(\delta \right)/ \delta \le \frac{\epsilon^2  }{2^{13}C^2} n \rho^2 $. 
        Since $m = kn\left (1- \frac{k+1}{2n}\right)\ge \frac{1}{4} kn$, we have 
        $a_k \le \frac{\epsilon}{64} \rho\sqrt{ k n m} \le \frac{\epsilon}{32} \rho m$.
        For $k=n$, by the assumption that $n\rho^2 \ge (4+\epsilon) \log n$, 
        $a_k \le \frac{\epsilon}{32} \rho m  $. In conclusion, we have
        $\tau \ge \rho m \left( 1-\frac{\epsilon}{32} \right)$.

         Note that $(A_{\pi(i)\pi(j)},B_{ij})$ are i.i.d.\ pairs of standard normal random variables with zero mean and correlation coefficient $\rho$. 
        Therefore, $\overline{A} = A $ and 
        $\overline{B} = B$. It follows that  $ Y = \sum_{ (i,j) \not\in \binom{F}{2}} A_{\pi(i)\pi(j) } B_{ij}$.

        First, we verify~\prettyref{eq:boundtermI} using the Hanson-Wright inequality 
        stated in \prettyref{lmm:hw}. Pick $C = 2c$, where $c>0$ is the universal constant in \prettyref{lmm:hw}. 
        If $k=n$, applying  \prettyref{lmm:hw} with $M=I_m$ to $ Y = \sum_{ (i,j) \not\in \binom{F}{2}} A_{\pi(i)\pi(j) } B_{ij} $ yields that 
        $
        \calP\pth{ Y \le \tau } \le e^{- 2 \log n}.
        $
        If $k \le n-1$, applying \prettyref{lmm:hw} again yields that 
        \begin{align*}
            \binom{n}{k} \calP\pth{ Y \le \tau } 
            \le \binom{n}{k}  \exp \left( - 2 n h\left(\frac{k}{n} \right) \right)
            \le \exp \left( - n h\left(\frac{k}{n} \right) \right).
        \end{align*} 
        Next, we check~\prettyref{eq:boundtermII}. 
        In view of \prettyref{eq:GW_expect_exp_AB_pi} in \prettyref{lmm:expect_exp_AB_pi},
        $M_1=\frac{1}{\lambda_2}$ and
        $
        M_2= \frac{1}{\lambda_1\lambda_2},
        $ 
        where $\lambda_1 = \sqrt{\left(1+\rho t \right)^2 - t^2 }$ and $\lambda_2 = \sqrt{\left(1- \rho t \right)^2 - t^2 }$
        for $0 \le t \le \frac{1}{1+\rho}$. 
        Thus for $0 \le t \le \frac{\rho}{1+\rho^2}$, $\frac{M_1}{M_2}=\lambda_1 \le 1+\rho^2 \le 2$.
        It follows that 
        \begin{align*}
            \inf_{t\ge 0} \left(-t \tau + \frac{k}{2} \log  \frac{M_1^2}{M_2} + \frac{m}{2}\left( M_2-1\right)    \right)
            & =  \inf_{t \ge 0}
            \left( -t \tau + k \log  \frac{M_1}{M_2} + \frac{m+k}{2} \left( M_2-1\right)    \right)\\
            & \le k +
            \inf_{0 \le t\le \frac{\rho}{1+\rho^2} } \left( -t \tau + \frac{m+k}{2} \left( M_2-1\right)  \right) \\
            & \le k + \frac{1}{2} k (n-1) \rho \inf_{0 \le t\le\frac{\rho}{1+\rho^2} } f(t),
        \end{align*}
        where
        \[
        f(t) \triangleq   -  
        t \left( 1 - \frac{\epsilon}{32} \right) +  
        \frac{1}{2\rho} \left( 1+ \frac{2}{n-1} \right) \left( M_2-1\right) , 
        \]
        and the last inequality holds because  $\tau \ge \rho m \left( 1-\frac{\epsilon}{32} \right)$
        and \prettyref{eq:m}.
        
        Therefore, to verify~\prettyref{eq:boundtermII}, it suffices to check
        \begin{align}
            \frac{1}{2} (n-1) \rho 
            \inf_{0 \le t\le \frac{\rho}{1+\rho^2} } 
            f(t)   \le -\left( 1+ \frac{\epsilon}{64} \right)  \log n -1. 
            \label{eq:GW_boundtermII_2}
        \end{align}
        {\bf Case 1}: $\rho^2 \ge \frac{\epsilon^2}{256}$. We pick $t= \frac{\rho}{1+2\rho^2}$. Then, 
        \[
         M_2 = \frac{\left(1+2\rho^2\right)^2}{\sqrt{1+\rho^2+\rho^4} \sqrt{1+5\rho^2+9\rho^4}} \le   \left(\frac{1+2\rho^2}{1+\frac{3}{2}\rho^2}\right)^2 \le 1+ \frac{ 2\rho^2}{2+3\rho^2} \, ,
        \]
        where the first inequality holds because $(1+\rho^2+\rho^4)(1+5\rho^2+9\rho^4) \ge \left( 1+\frac{3}{2}\rho^2\right)^4$.
        Therefore, 
        \begin{align*}
            f(t)
            & = -   \frac{\rho}{1+2\rho^2} \left(1- \frac{\epsilon}{32}\right)  +  \frac{\rho}{2+3\rho^2} \left(1+ \frac{2}{n-1}\right) \\
            & =  -   \frac{\rho}{2+3\rho^2} \left( \left(2 - \frac{\rho^2}{1+2\rho^2}\right) \left(1 - \frac{\epsilon}{32}\right)-\left(1+ \frac{2}{n-1}\right)\right) \\
            & \le -  \frac{\rho}{2+3\rho^2} \left( \frac{5}{3}\left(1 - \frac{\epsilon}{32}\right)-\left(1+ \frac{2}{n-1}\right)\right)\\  
            &\le -  \frac{\rho}{5} 
            \left( \frac{2}{3} -\frac{5\epsilon}{96} - \frac{2}{n-1}\right),
        \end{align*}
        where the first inequality holds because $ \frac{\rho^2}{1+2\rho^2} \le \frac{1}{3}$ and the last inequality follows
        from $2+3\rho^2 \le 5$. Hence, 
        \[
        \frac{1}{2} (n-1) \rho  f(t)  
        \le  -  (n-1) \frac{\rho^2}{10}
        \left( \frac{2}{3} -\frac{5\epsilon}{96} - \frac{2}{n-1}\right).
        \]
        Thus it follows from $\rho^2 \ge \frac{\epsilon^2}{256}$ that 
        \prettyref{eq:GW_boundtermII_2} holds for all sufficiently large $n$.
        
        {\bf Case 2}: $(4+\epsilon) \frac{\log n}{n} \le \rho^2 < \frac{\epsilon^2}{256}$. We pick $t = \frac{\rho}{1+\rho^2}$. Then,
        \[
        M_2 = \frac{\left(1+\rho^2\right)^2}{\sqrt{1+3\rho^2 + 4 \rho^4}\sqrt{1-\rho^2 } } 
        \le \frac{\left(1+\rho^2\right)^2 }{ 1+ \rho^2 -\rho^4 } 
            \le 1 + \rho^2 + 2\rho^4,
        \]
        where the first inequality holds because
        $(1+3\rho^2 + 4 \rho^4)(1-\rho^2) - (1+\rho^2-\rho^4)^2=\rho^4 (2 - 2\rho^2+\rho^4) \ge 0$
        under the assumption that $\rho^2 \le \epsilon^2/256$. Therefore,
        \begin{align*}
            f(t) \le - \frac{\rho}{1+\rho^2}  \left(1- \frac{\epsilon}{32}\right)  +  \frac{\rho(1+2\rho^3)}{2 }\left(1+ \frac{2}{n-1}\right) 
            \le - \rho \left(\frac{1}{2}-\frac{\epsilon}{16}\right)
            \left(1 - \frac{4}{n-1}\right) ,
        \end{align*}   
        where the first inequality holds
        because $\rho^2 < \frac{\epsilon^2}{256}$.
        Hence,
        \begin{align*}
            \frac{1}{2} (n-1) \rho f(t) 
            \le - \frac{1}{4} (n-1) \rho^2 \left(1-\frac{\epsilon}{32}\right) \left(1- \frac{4}{n-1}\right)  
            \le - \left(1
            + \frac{\epsilon}{32} \right) \left(1-\frac{1}{n}\right) \left(1- \frac{4}{n-1}\right)   \log n,
        \end{align*} 
        where the last inequality follows from the assumption that $ \rho^2 \ge (4+\epsilon) \frac{\log n}{n}$. 
        Thus \prettyref{eq:GW_boundtermII_2} holds for all  sufficiently large $n$. 
    \end{itemize}

    \section{Exact Recovery} 
    \label{sec:exact}

     \subsection{Possibility of Exact Recovery}
     \label{sec:exact-positive}
     Building upon the almost exact recovery
     results in the preceding section, we now analyze the MLE for exact recovery. Improving upon \prettyref{lmm:ER_fixing_k}, the following lemma gives a tighter bound on
     the probability that the MLE makes a small number of errors
     with respect to the general correlated \ER graph model specified by the joint distribution $P=(p_{ab}: a,b\in\{0,1\})$. 
        \begin{lemma}
        \label{lmm:small_k_prob}
        Suppose that for any constant $0<\epsilon<1$,
        \begin{itemize}
            \item for general \ER random graphs, if $n \left(\sqrt{p_{00}p_{11}}-\sqrt{p_{01}p_{10}}\right)^2 \ge \left(1+ \epsilon\right) \log n$;
            \item for Gaussian   model, if $n\rho^2  \ge (4+\epsilon) \log n$;
        \end{itemize}
        then for any $k\in [n]$ such that $k\le \frac{\epsilon}{16} n $, 
        \begin{align}
            \prob{ d\left(\piML, \pi\right)  = k} \le \exp\left(-\frac{\epsilon}{8} k \log n\right).  \label{eq:small_k}
        \end{align}
    \end{lemma}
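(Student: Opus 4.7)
The plan is to apply the Chernoff bound directly to the difference $\langle A^{\pi'} - A^{\pi}, B \rangle$ and union-bound over all $\pi'$ with $d(\pi,\pi')=k$. Since $|\{\pi': d(\pi,\pi')=k\}| \le n^k$, the bookkeeping used in \prettyref{sec:pf-ER_fixing_k} that separately controls $X$ and $Y$ (in order to exploit that the number of choices of the fixed-point set $F$ is much smaller than the number of $\pi'$) offers no advantage here, since for $k=o(n)$ those two counts are of the same order. Instead, I would write
\[
\prob{d(\piML,\pi)=k} \le \sum_{\pi': d(\pi,\pi')=k} \Expect\qth{\exp\pth{t\langle A^{\pi'}-A^{\pi},B\rangle}} \le n^k \cdot \prod_{O \in \calO\setminus\calO_1} \tM_{|O|},
\]
where $\tM_\ell \triangleq \Expect\qth{\exp\pth{t(X_O-Y_O)}}$ for an edge orbit of length $\ell$, and the product factorizes by the mutual independence of $X_O-Y_O$ across orbits.

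Two structural facts then reduce the product to a single scalar. First, single-edge orbits in $\calO\setminus\calO_1$ come from 2-node cycles of $\sigma=\pi^{-1}\circ\pi'$, and for such an orbit $X_O-Y_O=0$ by the symmetry of the adjacency vector, hence $\tM_1=1$. Second, as announced in \prettyref{sec:exact_proof_techniques}, the relation $\tM_\ell \le \tM_2^{\ell/2}$ continues to hold for $\ell\ge 2$. Combining these with the edge identity $\sum_{\ell\ge 2}\ell N_\ell = \binom{n}{2}-\binom{n-k}{2}-n_2 = m-n_2$ (where $m \triangleq \binom{n}{2}-\binom{n-k}{2}$ and $n_2\le k/2$), one gets $\prod_{O\in\calO\setminus\calO_1}\tM_{|O|} \le \tM_2^{(m-n_2)/2}$.

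The final step is to pick $t$ minimizing $\tM_2$. On a 2-edge orbit $\{e_1,e_2\}$, one has the clean identity $X_O-Y_O = -(A^\pi_{e_1}-A^\pi_{e_2})(B_{e_1}-B_{e_2})$, so $\tM_2$ reduces to a low-dimensional MGF. For the \ER model the Bhattacharyya tilt $e^t=\sqrt{p_{00}p_{11}/(p_{01}p_{10})}$ yields $\tM_2 = 1-2(\sqrt{p_{00}p_{11}}-\sqrt{p_{01}p_{10}})^2$; for the Gaussian model a direct bivariate-normal MGF computation gives $\tM_2=\sqrt{1-\rho^2}$ at the optimum. Since $k\le\epsilon n/16$ implies $m-n_2\ge kn(1-\epsilon/16)(1-o(1))$, plugging in the hypothesis $n(\sqrt{p_{00}p_{11}}-\sqrt{p_{01}p_{10}})^2 \ge (1+\epsilon)\log n$ (resp.\ $n\rho^2\ge(4+\epsilon)\log n$) together with the $n^k$ union-bound factor gives an exponent of at least $k\log n(1 - (1+\epsilon)(1-\epsilon/16))$, which is less than $-\epsilon k \log n/8$ with comfortable margin.

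The main obstacle I anticipate is the rigorous verification of the length-$\ell$ MGF inequality $\tM_\ell \le \tM_2^{\ell/2}$ for the \emph{difference} $X_O-Y_O$, rather than for $X_O$ alone as in \prettyref{lmm:expect_exp_AB_pi}. I would attempt this by writing $X_O-Y_O = \sum_{i=1}^{\ell}(A^\pi_{e_{i+1}}-A^\pi_{e_i})B_{e_i}$ with cyclic indices and either (a) running a direct recursion along the cycle, (b) invoking Cauchy--Schwarz to decouple the cycle into $\ell/2$ effective length-$2$ pieces, or (c) reducing to the centered single-term MGF bound already proved in \prettyref{lmm:expect_exp_AB_pi} by a shift/centering trick. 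Beyond this, the remaining calculations -- the explicit form of $\tM_2$, the Bhattacharyya tilt optimization, and the bound $m-n_2 \ge kn(1-\epsilon/16)$ -- are essentially routine and mirror the corresponding computations in \prettyref{sec:pf-ER_fixing_k}.
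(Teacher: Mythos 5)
Your proposal is correct and follows essentially the same route as the paper's proof: a Chernoff bound applied directly to $\langle A^{\pi'}-A^{\pi},B\rangle$ with the $n^k$ union bound, an edge-orbit decomposition exploiting independence across orbits, the domination $\tM_\ell\le \tM_2^{\ell/2}$, the Bhattacharyya tilt giving $\tM_2=1-2\bigl(\sqrt{p_{00}p_{11}}-\sqrt{p_{01}p_{10}}\bigr)^2$ (resp.\ $\tM_2=\sqrt{1-\rho^2}$), and the count $m-n_2\ge kn\bigl(1-\tfrac{k+2}{2n}\bigr)$. The one step you defer, the length-$\ell$ MGF inequality for the difference $X_O-Y_O$, is exactly \prettyref{lmm:expect_exp_AB_pi_minus_AB_pi_*}, which the paper proves by an explicit transfer-matrix (resp.\ Gaussian-integral) evaluation of $L_\ell$ followed by the elementary inequalities $x^k+y^k\le(x^2+y^2)^{k/2}$ and $(a+b)^k-(a-b)^k\ge(4ab)^{k/2}$.
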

Note that~\prettyref{lmm:small_k_prob} only holds when $k/n$ 
is small but requires less stringent conditions
     than \prettyref{lmm:ER_fixing_k}. 
     Inspecting the proof
     of~\prettyref{lmm:small_k_prob}, one can see that 
     if the condition is strengthened by a factor of $2$ to 
     $n \left(\sqrt{p_{00}p_{11}}-\sqrt{p_{01}p_{10}}\right)^2 \ge \left(2+ \epsilon\right) \log n$ for the \ER graphs (which is the assumption of \cite{cullina2016improved}),
     then \prettyref{eq:small_k} holds for any $k\in [n]$.
    Conversely, we will prove in \prettyref{sec:exact-negative} that exact recovery is impossible if $n \left(\sqrt{p_{00}p_{11}}-\sqrt{p_{01}p_{10}}\right)^2 \le \left(1- \epsilon\right) \log n$. Closing this gap of two for general \ER graphs is an open problem.


In the following, we apply~\prettyref{lmm:small_k_prob}
for small $k$ and~\prettyref{lmm:ER_fixing_k} (or \prettyref{prop:possibility_almost})  for large $k$
to show the success of MLE in exact recovery. 


\begin{proof}[Proof of positive parts in~\prettyref{thm:Gaussian_negative} and~\prettyref{thm:exact_ER}] 

Without loss of generality, we assume $\epsilon < 1$.
        For all $k \le \frac{\epsilon}{16} n$, by \prettyref{lmm:small_k_prob}, for both \ER random graphs and Gaussian  model, 
        \begin{align*}
            \sum_{k=1}^{\frac{\epsilon}{16} n} \prob{ d\left(\piML, \pi\right) = k} \le  \sum_{k=1}^{\frac{\epsilon}{16} n} \exp\left(-\frac{\epsilon}{8} k \log n\right) \le  \frac{\exp\left(-\frac{\epsilon}{8}\log n\right)}{1-\frac{\epsilon}{8}\log n } = n^{- \frac{\epsilon}{8} +o(1)}. 
        \end{align*}
        Moreover, 
        \begin{itemize} 
       \item   For the Gaussian model, pick $\delta = \frac{\epsilon}{16}$. Thus by \prettyref{eq:ER_fixing_k} in \prettyref{lmm:ER_fixing_k} and \prettyref{eq:summation_k}, \prettyref{eq:exact_large_k} follows. 
         
             \item  For the subsampling \ER random graphs,  we divide the proof into two cases depending on whether $ps \le 1/2$. 
    
             \textbf{Case 1: $ps\le \frac{1}{2}$.} Note that condition \prettyref{eq:exact_positive_er} is equivalent to
            \begin{align}
                n \left(\sqrt{p_{11}p_{00}} - \sqrt{p_{01}p_{10}} \right)^2
                & = n ps^2 f(p,s) \ge (1+\epsilon) \log n \, ,  \label{eq:exact_positive_er_1}
            \end{align}
            where  \[f(p,s) \triangleq \left(\sqrt{1-2ps+ps^2}-\sqrt{p}(1-s)\right)^2 \, .\]
            In the sequel, we aim to show that if~\prettyref{eq:exact_positive_er_1} holds,
            then $nps^2(1-p) = \omega(1)$,  \prettyref{eq:recovery_positive_cond_2} holds for $p = 1-o(1) $, and \prettyref{eq:recovery_positive_cond_1}  holds for $p = 1-\Omega(1)$.  To this end, we will use the  inequality
            \begin{align}
               0\le   \frac{\partial f(p,s)}{\partial s} \le 2 \sqrt{p} \, , \label{eq:f_derivative}
            \end{align}
           which follows from
            \[
            \frac{\partial f(p,s)}{\partial s}= \dfrac{2 \sqrt{p} \left(\sqrt{1-2ps+ps^2}-\sqrt{p}\left(1-s\right)\right)^2 }{\sqrt{1-2ps+ps^2}} \, .  
            \]
            \begin{itemize}
             \item    
            By \prettyref{eq:f_derivative}, $f(p,s)$ is a monotone increasing on $s\in (0,1)$. Therefore, 
            $
            f(p,s) \le 1-p.
            $
            Hence, if \prettyref{eq:exact_positive_er_1} holds, then 
            $
            nps^2 \left(1-p\right) = \omega(1) \, .
            $
            \item Suppose $p = 1-o(1)$. We have
            \begin{align*}
            f(p,s) & = p(1-s)^2 \left(\sqrt{1+ \frac{1-p}{p(1-s)^2}} -1\right)^2 \le \frac{(1-p)^2}{4 p(1-s)^2} \, ,    
            \end{align*}
            where the last inequality holds because $\sqrt{1+x} \le 1+ \frac{x}{2}$. 
            Thus, if \prettyref{eq:exact_positive_er_1} holds, then 
            \begin{align*}
                 \frac{ n \left(1-p\right)^2}{ \left( 1-p s \right)^2  }  \ge \frac{ n \left(1-p\right)^2}{ \left(1 + \frac{\epsilon}{2}\right) p \left( 1-s \right)^2  } \ge (4 + \epsilon) \log n \, ,
            \end{align*} 
            where the first inequality holds because $p = 1-o(1)$ and $ps \le \frac{1}{2}$. Hence, \prettyref{eq:recovery_positive_cond_2} follows. 
                \item Suppose $p = 1-\Omega(1)$. 
                By the
                mean value theorem and \prettyref{eq:f_derivative},
                we have
                \[
                f(p,s) \le f(p,0) + s \sup_{s:ps \le 1/2} \frac{\partial f(p,s)}{\partial s} \le \left(1-\sqrt{p}\right)^2 +2 \sqrt{p} s \, ,
                \]
                Now, suppose $s = o(1)$ or $p = o(1)$. Then
                \[
                \left(1-\sqrt{p}\right)^2 +2 \sqrt{p} s  \le \left(1-\sqrt{p}\right)^2\left(1+ \frac{\epsilon}{4} \right) \, , 
                \]
                for all sufficiently large $n$. Therefore, if \prettyref{eq:exact_positive_er_1} holds, then
                \begin{align}
                    nps^2 (1-\sqrt{p})^2 \ge \left(1+\frac{\epsilon}{2}\right) \log n \, . \label{eq:p_bounded_away_1} 
                \end{align}
                
                If instead $s = \Omega(1)$ and $p = \Omega(1)$, then \prettyref{eq:p_bounded_away_1} follows directly. 
                
                In conclusion, for both cases, since $\log \frac{1}{p} -1+ p \ge 2 (1-\sqrt{p})^2$,  
                we get that \prettyref{eq:recovery_positive_cond_1} holds.   
                
                
            \end{itemize}
          Finally, by
          \prettyref{prop:possibility_almost}, we have that
            \begin{align}
                \prob{ d\left(\piML, \pi\right) > \frac{\epsilon}{16} n } \le n^{-1 +o(1)} \, .
                \label{eq:exact_large_k}
            \end{align}
          
          \textbf{Case 2: $ps > \frac{1}{2}$. } In this case, we consider the equivalent model $(n,p',s')$ as specified in~\prettyref{eq:equivalent_model} with $p's'=1-ps\le 1/2$ by flipping $0$ and $1$. Note that  after flipping $0$ and $1$, both the value of 
          $\sqrt{p_{11}p_{00}} - \sqrt{p_{01}p_{10}}$
          and the MLE are clearly unchanged. 
        Therefore, applying the Case $1$ of~\prettyref{thm:exact_ER} to 
        to the model $(n,p',s')$, we get that \prettyref{eq:exact_large_k} holds for the model $(n,p,s).$

        \end{itemize}
        Hence, for both the \ER random graph and Gaussian   model, 
        \[
        \prob{ d\left(\piML, \pi\right)>  0} \le  \sum_{k=1}^{\frac{\epsilon}{16} n}  \prob{ d\left(\piML, \pi\right) = k} + \prob{ d\left(\piML, \pi\right) > \frac{\epsilon}{16} n }  \le n^{-\frac{\epsilon}{8} + o(1)}.
        \]
    \end{proof}

    \subsubsection{Proof of \prettyref{lmm:small_k_prob}}

    In this proof
    we focus on the case with positive correlation, \ie,
    $p_{00}p_{11} \ge p_{01}p_{10}$ in the general \ER graphs
    and $\rho\ge 0$ in the Gaussian model. 
    The case with negative correlation can be proved analogously
    by bounding the probability $\iprod{A^{\pi'}-A^{\pi}}{B} \le 0$.
    
    Fix $k \in [n]$ and let $\calT_k$ denote the set of permutation $\hpi$ such that $d(\pi,\hpi) = k$. 
    Let $\calO_1'$ is the set of fixed points of edge permutation $\sigmae$,  where in view of~\prettyref{eq:fixed_edge_node},
    \[|\calO_1'|= N_1 = \binom{n_1}{2}+n_2= \binom{n-k}{2}+n_2.\] 
     Then, applying the Chernoff bound together with the union bound yields that for any $t\ge 0$,
        \begin{align}
            \prob{ d\left(\piML, \pi\right)  = k} 
            & \le \sum_{\hpi \in \calT_k } \prob{ Z_\hpi \ge  0} 
            \le n^k \expect{\exp\left({t Z_\hpi}\right)}, \label{eq:small_k_upperbound}
        \end{align}
    where
    \begin{align*}
        Z_\hpi 
        & \triangleq \sum_{i<j}A_{\hpi(i)\hpi(j)}  B_{ij}  -\sum_{i<j} A_{\pi(i)\pi(j)} B_{ij} \\
        & = \sum_{\calO\backslash \calO_1'} \left(\sum_{(i,j)\in O}A_{\hpi(i)\hpi(j)}B_{i,j}-\sum_{(i,j)\in O}A_{\pi(i)\pi(j)}B_{i,j} \right)  \triangleq  \sum_{\calO\backslash \calO_1'} Z_O \, ,
    \end{align*}
    where 
    \[
    Z_O = X_O - Y_O \, . 
    \]
    Since edge orbits are disjoint, it follows that $Z_O$ are mutually independent across different $O$. Therefore, for any $t>0$,
    $$
    \expect{\exp\left(t \sum_{O \in \calO \backslash \calO_1'} Z_O\right)}  = \prod_{O\in  \calO \backslash \calO'_1} \expect{\exp\left(t Z_O\right)}.
    $$ 
    It turns out that the MGF of $Z_O$ can be explicitly computed in both  \ER random graph and Gaussian  model. In particular, applying  \prettyref{lmm:expect_exp_AB_pi_minus_AB_pi_*} yields that
    $\expect{e^{t Z_O}} = L_{|O|}$ and $L_\ell \le L_2^{\ell/2}$ for $\ell \ge 2$.
      It follows that 
    $$
    \expect{ \exp\left({t Z_\hpi}\right) } =  \prod_{\ell=2}^{\binom{n}{2}} L_\ell^{N_\ell}
    \le \prod_{\ell=2}^{\binom{n}{2}} L_2^{\ell N_\ell /2}
    \le L_2^{\frac{m}{2}} \, ,
    $$
    where  $m \triangleq \binom{n}{2} -  \binom{n-k}{2} - n_2$ and 
    the last inequality follows from $\sum_{\ell = 1}^{\binom{n}{2}} \ell N_\ell = \binom{n}{2}$ and $N_1 = \binom{n_1}{2} + n_2 = \binom{n-k}{2} + n_2$.
    Hence, 
    \begin{align*}
        \prob{ d\left(\piML, \pi\right)  = k} 
        & \stepa{\le} \exp\left(k \log n  + \frac{1}{2}  k n \left(1-\frac{k+2}{2n}\right) \log L_2 \right) \\
        & \stepb{\le} \exp\left(-  \left(\frac{\epsilon}{4}- \frac{k+2}{2 n} \left(1+\frac{\epsilon}{4}\right)\right)k\log n\right)  \\
        & \stepc{\le}  \exp\left(- \frac{\epsilon}{8} k \log n  \right),
    \end{align*}
    where $(a)$ holds because $n_2 \le k/2$ and then $m \ge k n \left(1-\frac{k+2}{2n} \right)$;
    $(b)$ holds by the claim that $ n \log L_2 \le  -2 (1+\epsilon/4) \log n$ for appropriately chosen $t$ ; 
    $(c)$ holds for all sufficiently large  $n$
    given $k/n \le \frac{\epsilon}{16}$ and $0<\epsilon < 1$. 
    It remains to check  $ n \log L_2 \le  -2 (1+\epsilon/4) \log n$ by choosing appropriately $t$ for \ER random graphs and Gaussian   model separately.   
    \begin{itemize}
        \item For \ER random graphs, 
        in view of \prettyref{eq:ER_expect_exp_AB_pi_minus_AB_pi_*} in \prettyref{lmm:expect_exp_AB_pi_minus_AB_pi_*},
        \[
        L_2 =
        1 + 2 \left(p_{01}p_{10}\left(e^t-1\right) +p_{00}p_{11}\left(e^{-t}-1\right) \right).
        \]
        Since $p_{00}p_{11} \ge p_{01}p_{10}$, by choosing the optimal $t \ge 0$ such that $e^t =\sqrt{\frac{p_{00}p_{11}}{p_{01}p_{10}}} $,
        $
        L_{2}
        = 1- 2 \left(\sqrt{p_{00}p_{11}} - \sqrt{p_{01}p_{10}} \right)^2, 
        $
        and hence 
        $$
        n \log L_2 \le - 2n \left(\sqrt{p_{00}p_{11}} - \sqrt{p_{01}p_{10}} \right)^2 \le- 2(1+\epsilon) \log n,
        $$
        where the last inequality holds by the assumption that $ n \left(\sqrt{p_{00}p_{11}} - \sqrt{p_{01}p_{10}} \right)^2 \ge  (1+\epsilon) \log n$;

        \item For Gaussian   model, 
        in view of \prettyref{eq:GW_expect_exp_AB_pi_minus_AB_pi_*} in \prettyref{lmm:expect_exp_AB_pi_minus_AB_pi_*},
        \[
        L_2 
        =  \frac{1}{\sqrt{1 + 4 t \rho -4 t^2 \left(1-\rho^2 \right)}}.
        \]
        By choosing the optimal $t = \frac{\rho }{2\left(1-\rho^2\right)} \ge 0$, 
        $
        L_2 = \sqrt{1-\rho^2}. 
        $
        and hence
        $$
        n\log L_2 \le - \frac{1}{2} n \rho^2 \le- 2(1+\epsilon/4) \log n,
        $$
        where the last inequality holds by the assumption that  $ n \rho^2 \ge  (4+\epsilon) \log n$.
    \end{itemize}

    \subsection{Impossibility of Exact Recovery} 
    \label{sec:exact-negative}
    

In this subsection we prove the negative result in \prettyref{thm:exact_ER}. 
 As in \prettyref{sec:exact-positive}, we consider a  general correlated \ER graph model, 
 where 
$\prob{A_{\pi(i)\pi(j)}=a, B_{ij}=b}=p_{ab}$. 
We aim to show that if
    \begin{align}
    \label{eq:exact_negative_er-general}
    n \left( \sqrt{p_{00} p_{11}}-\sqrt{p_{01} p_{10}}\right)^2 \le \left(1- \epsilon\right) \log n,
    \end{align}
then the exact recovery of the latent permutation $\pi$ is impossible.
Particularizing to the subsampling model parameterized by~\prettyref{eq:general-subsampling}
the condition \prettyref{eq:exact_negative_er-general}  reduces to 
\[
    nps^2\left(\sqrt{1-2ps+ps^2}-\sqrt{p}(1-s)\right)^2  \leq (1-\epsilon) \log n,
\]
which, under the assumption that $p=1-\Omega(1)$ in \prettyref{thm:exact_ER}, is further equivalent to 
the desired \prettyref{eq:exact_negative_er}.


Since the true permutation $\pi$ is uniformly distributed, the MLE $\piML$ minimizes the error probability among all estimators. 
In the sequel, we focus on the case of positive correlation as the other case is entirely analogous. Without loss of generality, the latent permutation $\pi$ is assumed to be the identity permutation $\id$.


    To prove the failure of the MLE, it suffices to show the existence of a  permutation 
    $\pi'$ that achieves a higher likelihood than the true permutation $\pi=\id$.
    To this end, we consider
    the set $\calT_2$ of permutation $\pi'$ such that $d(\pi',\pi)=2$; in other words, $\pi'$ coincides with $\pi$ except for swapping two vertices. Then 
    the cycle decomposition of $\pi'$ consists of $n-2$ fixed points and 
    a $2$-node orbit $(i,j)$ for some pair of vertices $(i,j)$. It follows that 
    \begin{align*}
        \left\langle A,B\right\rangle - \langle A^{\hpi},B\rangle 
        = \sum_{k\in [n] \backslash \{i,j\}} \left(A_{i k}- A_{j k}\right)\left(B_{i k}- B_{j k}\right)
        \triangleq \sum_{k\in [n] \backslash \{i,j\}} X_{ij,k},
    \end{align*} 
    where $ X_{ij,k}  \iiddistr a\delta_{+1}+ b \delta_{-1}+ \left(1-a-b\right) \delta_{0}$ for $k  \in [n] \backslash \{i,j\}$
    with $a \triangleq 2 p_{00}p_{11}$, $b \triangleq 2 p_{01}p_{10}$ and $a \geq b$ by assumption.
We aim to show the existence of $\pi' \in \calT_2$ such that 
    $\left\langle A,B\right\rangle \le \langle A^{\hpi},B\rangle$,
    which further implies that $\pi'$ has a higher likelihood than $\pi$.
    We divide the remaining analysis into two cases depending on whether 
    $na \ge (2-\epsilon)\log n$.
    
    First we consider the case  where $na \le (2-\epsilon)\log n$.
    We show that with probability at least
    $1-n^{-\Omega(\epsilon) }$, there are at least 
    $n^{\Omega(\epsilon)}$ distinct $\pi' \in \calT_2$
    such that $\iprod{A}{B} \le \langle A^{\pi'},B\rangle$. 
    This implies that the MLE $\hat{\pi}_{\rm ML}$
    coincides with $\pi$ with probability at most $n^{-\Omega(\epsilon)}$.
    Specifically, define $\chi_{ij}$ as the indicator random variable which equals to $1$
    if $  X_{ij,k}\le 0$ 
    for all $k \neq i,j$, and $0$ otherwise. 
    Then
    \begin{align*}
        \prob{\chi_{ij}=1} 
        =\prod_{k\neq i,j} 
        \prob{ X_{ij,k} \le 0 }  =\left( 1-a \right)^{n-2} 
        \ge n^{-2+\epsilon-o(1)},
    \end{align*}
    where the last inequality holds because $an \le (2-\epsilon)\log n$.
    Let $I=\sum_{1 \le i \le n/2}
    \sum_{n/2<j\le n}
    \chi_{ij}$.
    Then $\expect{I} \ge n^{\epsilon-o(1)}$.
    Next, we show $\var(I)=o\left(\expect{I}^2\right)$,
    which, by Chebyshev's inequality, implies that 
    $I \ge \frac{1}{2} \expect{I}$ with high probability. 
    Note that 
    $$
    \var(I)
    =\sum_{1 \le i, i' \le n/2 }
    \sum_{n/2<j, j' \le n}
    \prob{\chi_{ij} =1, \chi_{i'j'} =1}
    -\prob{\chi_{ij} =1}
    \prob{\chi_{i'j'} =1}.
    $$
    We break the analysis into the following four cases depending on the choice of $(i,j)$
    and $(i',j')$.
    
    Case 1: $i=i'$, and $j=j'$. In this case,
    $$
    \prob{\chi_{ij} =1, \chi_{i'j'} =1}
    -\prob{\chi_{ij} =1}
    \prob{\chi_{i'j'} =1}
    \le \prob{\chi_{ij} =1}.
    $$
    
    Case 2: $i=i'$, and $j\neq j'$.
    In this case, note that
    \begin{align*}
        \prob{\chi_{ij'}=1 \mid \chi_{ij}=1}
        & =\prod_{k \neq i,j'}
        \prob{ X_{ij,k}\le 0 
            \mid \chi_{ij}=1} \\
        & = \prob{X_{ij',j} \le 0 
            \mid X_{ij,j'} \le 0}  \times  \prod_{k \neq i,j,j'}
        \prob{ X_{ij',k}\le 0 
            \mid  X_{ij,k} \le 0} 
        \\
        & \le \left( 1- p_{00}p_{11} \right)^{n-2},
    \end{align*}
    where the last inequality holds because 
    \begin{align*}
        \prob{ X_{ij',k} \le 0 
            \mid  X_{ij,k} \le 0} 
        &=\frac{ p_{11} \left( 1-p_{00}\right)^2 
            + p_{01} + p_{10} + p_{00} \left(1- p_{11} \right)^2 }{1-2p_{00}p_{11}} \\
        &= \frac{1-4p_{00}p_{11} + p_{00}p_{11}
            \left(p_{00}+p_{11}\right) }{1-2p_{00}p_{11}}\\
        & \le \frac{1-3p_{00}p_{11}  }{1-2p_{00}p_{11}} \le 1- p_{00}p_{11},
    \end{align*}
    and similarly for $ \prob{X_{ij',j} \le 0 
        \mid X_{ij,j'} \le 0} $.
    

    Case 3: $i\neq i'$, and $j= j'$.
    Analogous to Case 2, we have
    $$
    \prob{\chi_{i'j}=1 \mid \chi_{ij}=1}
    \le \left( 1- p_{00}p_{11} \right)^{n-2}
    $$
    
    Case 4: $i \neq i'$, and $ j\neq j'$.
    In this case,
    \begin{align*}
        \prob{\chi_{i'j'}=1 \mid \chi_{ij}=1}
        & =\prod_{k \neq i',j'}
        \prob{ X_{i'j',k} \le 0 
            \mid \chi_{ij}=1} \\
        & \le \prod_{k \neq i,i',j,j'}
        \prob{ X_{i'j',k} \le 0 
        } \\
        &= \left( 1- 2p_{00}p_{11}\right)^{n-4},
    \end{align*}
    where the first inequality holds because
    for all $k \neq i,i',j,j'$, $X_{i'j',k}$
    are independent from  $\{ X_{ij,k}\}_{k \neq i,j}$.
    Therefore,
    \begin{align*}
        \prob{\chi_{ij} =1, \chi_{i'j'} =1}
        -\prob{\chi_{ij} =1}
        \prob{\chi_{i'j'} =1}
        & = \left( 1- 2p_{00}p_{11}\right)^{2n-6}
        \left( 1 -  \left( 1- 2p_{00}p_{11}\right)^2\right) \\
        & \le 4 p_{00}p_{11}
        \left( 1- 2p_{00}p_{11}\right)^{2n-6}.
    \end{align*}
    
    Combining all four cases above, we get that 
    \begin{align*}
        \var(I) \le \expect{I}
        \left(1 + n \left( 1-  p_{00}p_{11}\right)^{n-2} 
        +n^2 p_{00}p_{11} \left( 1-  2p_{00}p_{11}\right)^{n-4}\right)
    \end{align*}
    Therefore,
    $$
    \frac{  \var(I)}{\expect{I}^2}
    \le 
    \frac{1 + n \left( 1-  p_{00}p_{11}\right)^{n-2}
        +n^2 p_{00}p_{11}\left( 1-  2p_{00}p_{11}\right)^{n-4}}
    {n^2 \left( 1-  2p_{00}p_{11}\right)^{n-2}}
    =O\left( n^{-\Omega(\epsilon)}\right),
    $$
    where the last inequality holds because $np_{00}p_{11} \le (1-\epsilon/2)\log n$.
    In conclusion, we have shown that 
    $I \ge n^{\Omega(\epsilon)}$
    with probability at least $1-n^{-\Omega(\epsilon)}$.
    This implies that the MLE exactly recovers the true permutation with probability at most $n^{-\Omega(\epsilon)}$.

    \medskip
    Next, we shift to the case where $na \ge (2-\epsilon)\log n$.
    The assumption \prettyref{eq:exact_negative_er-general} translates to  $n(\sqrt{a}-\sqrt{b})^2 \le 2(1-\epsilon)\log n$,
    we have 
    $$
    (2-\epsilon)\log n \left( 1- \sqrt{\frac{b}{a}}\right)^2 \le n a \left( 1- \sqrt{\frac{b}{a}}\right)^2
    \le 2(1-\epsilon)\log n.
    $$
    It follows that 
    $$
    \sqrt{\frac{b}{a}} \ge 1- \sqrt{\frac{2(1-\epsilon)}{2-\epsilon}}
    =1- \sqrt{1 - \frac{\epsilon}{2-\epsilon}}
    \ge \frac{\epsilon}{2(2-\epsilon)} \ge \frac{\epsilon}{4}
    $$
    and hence $\frac{\epsilon^2}{16} \le \frac{b}{a} \le 1 $. 
    Let $T$ be a set of $2m$ vertices where $m = \floor{n/\log^2 n}$. We further partition $T$ into $T_1 \cup T_2$ where $|T_1|=|T_2| = m$.
    Let $\calS$ denote the set of permutations $\hpi$ that consists of $n-2$ fixed points and a $2$-node orbit $(i,j)$ for some $i\in T_1$ and $j\in T_2$. Then $|\calS| = m^2$. 
    Next we show that
    \begin{align}
        \prob{\exists \pi'\in \calS \text{ s.t. }  \left\langle A,B\right\rangle - \left\langle A^{\hpi},B\right\rangle
            <0  } = 1-o(1). \label{eq:desired_negative_exact}
    \end{align}
    Fix a $\hpi \in \calS$ with $(i,j)$ as its $2$-node orbit, \ie, $\hpi(i)=j$,  $\hpi(j)=i$, and $\hpi(k)=k$ for any $k \in [n]\backslash \{i,j\}$. 
    Then $
    \left\langle A,B\right\rangle - \left\langle A^{\hpi},B\right\rangle 
    = X_{ij} + Y_{ij}, 
    $
    where
    \[
    X_{ij} \triangleq \sum_{k\in T^c}  X_{ij,k} ,\quad \text{ and } \quad  Y_{ij} \triangleq  \sum_{k\in T \backslash \{i,j\}}  X_{ij,k}. 
    \]
    Note that 
    $\expect{Y_{ij}} = \left(2m-2\right) (a-b)$ 
   and $\Var{\left[Y_{ij}\right]} \le \left(2m-2\right) \left(a+b\right)$. 
       Letting 
       \[
       \tau \triangleq \left(2m-2\right) (a-b) +  \sqrt{ \left(2m-2\right) \left(a+b\right) \log n},
       \] 
       by Chebyshev's inequality, 
       \begin{align}
               \prob{Y_{ij} \ge \tau} \le \frac{1}{\log n}. \label{eq:Y_tail_bobound}
       \end{align}
    Define 
    \[
    T' = \left\{(i,j) \in T_1 \times T_2:  Y_{ij} < \tau \right\}. 
    \]
    Then $\expect{\left| (T_1 \times T_2) \backslash T'\right| } \le m^2/\log n$.
    Hence, by Markov's inequality, 
    $\left| (T_1 \times T_2) \backslash T'\right|  \ge \frac{1}{2} m^2$
    with probability at most $ \frac{2}{\log n}$.
    Hence, we have $|T'|\ge \frac{1}{2} m^2$  with probability at least $1- \frac{2}{\log n}$.

    Note that crucially $T'$ is independent of $\{X_{ij}\}_{i \in T_1, j \in T_2}$. Thus, we condition on the event that 
    $|T'| \ge \frac{1}{2} m^2$ in the sequel.
    Define $I_n = \sum_{(i,j)\in T'} \indc{ X_{ij} \le -\tau}$. 
    To bound the $\Prob\{X_{ij} \le -\tau\}$ from below, we need the following reverse large-deviation estimate (proved at the end of this subsection):
    
    \begin{lemma}\label{lmm:inverse_Chernoff}
        Suppose $\{X_k\}_{i\in [n]} \iiddistr a \delta_{+1}+b\delta_{-1}+(1-a-b) \delta_0$ with some $a,b \in[0,1]$ such that $0\le a+b \le 1$, $1\le \frac{a}{b} = \Theta(1)$, $an = \omega(1)$, and $\left(\sqrt{a}-\sqrt{b}\right)^2 \le 2\frac{\log n}{n}$. For all $\tau$ such that $\tau=o\left( \sqrt{a n\log n}\right)$ and $\tau = o(an)$, and any constant $0<\delta<1$, 
        there exists $n_0$ such that for all $n\geq n_0$,
        \begin{align*}
            \prob{\sum_{k=1}^n X_k \le - \tau} \ge  \exp\left(-n \left(\sqrt{a}-\sqrt{b} \right)^2 - \frac{\delta}{2} \log n\right).
        \end{align*} 
    \end{lemma}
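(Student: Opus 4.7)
The plan is to prove this reverse large-deviation estimate by an exponential change of measure at the optimal Chernoff tilt. Since $\E[X_k] = a - b \ge 0$, the event $\{S_n \le -\tau\}$ with $S_n \triangleq \sum_{k=1}^n X_k$ is atypical under $\calP$, and I would tilt with $t^\star \triangleq \frac{1}{2}\log(b/a) \le 0$, which equalises $ae^{t^\star} = be^{-t^\star} = \sqrt{ab}$ so that $\Expect_{t^\star}[X_k] = 0$, and for which
\[
M(t^\star) \triangleq \expect{e^{t^\star X_k}} = 2\sqrt{ab} + 1 - a - b = 1 - (\sqrt a - \sqrt b)^2.
\]
Using $\log(1-x) \ge -x - x^2$ for $x \in [0,1/2]$ together with the hypothesis $n(\sqrt a - \sqrt b)^2 \le 2\log n$, one has $n\log M(t^\star) \ge -n(\sqrt a - \sqrt b)^2 - o(1)$, so $M(t^\star)^n$ already supplies the leading exponential factor in the target bound.

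Next, I would apply the change-of-measure identity $\prob{S_n \le -\tau} = M(t^\star)^n \Expect_{t^\star}[e^{-t^\star S_n}\indc{S_n \le -\tau}]$ and restrict to the window $A \triangleq \{-\tau - L \le S_n \le -\tau\}$ for a well-chosen $L$. Since $t^\star \le 0$, the inequality $e^{-t^\star S_n} \ge e^{t^\star(\tau+L)}$ holds on $A$, yielding
\[
\prob{S_n \le -\tau} \ge M(t^\star)^n \cdot e^{t^\star(\tau+L)} \cdot \Prob_{t^\star}[A].
\]
The hypothesis $a/b = \Theta(1)$ forces $|t^\star| = O(1)$, and the hypotheses $\tau = o(\sqrt{an\log n})$ together with $an = O(\log n)$ (forced by $(\sqrt a - \sqrt b)^2 \le 2\log n/n$ in the non-degenerate case $b/a$ bounded away from $1$) give $\tau = o(\log n)$. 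Choosing $L$ to be a constant then makes the factor $e^{t^\star(\tau+L)}$ lose only $\exp(-o(\log n))$.

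It remains to lower bound the tilted probability $\Prob_{t^\star}[A]$. Under $\calP_{t^\star}$, the variables $X_k$ are \iid with mean zero, variance $\sigma^2 \triangleq 2\sqrt{ab}/M(t^\star) \asymp a$, and take values in the lattice $\{-1, 0, 1\}$, so $S_n$ has mean zero and variance $n\sigma^2 \asymp na$. The target window is centered at $-\tau$ with $\tau/\sqrt{n\sigma^2} = O(\sqrt{\log n})$. A local limit theorem for lattice-valued \iid sums (equivalently, a Cram\'er-type moderate deviation expansion) then gives
\[
\Prob_{t^\star}[A] \gtrsim \frac{1}{\sqrt{n\sigma^2}}\exp\bpth{-\frac{\tau^2}{2n\sigma^2}} = \exp(-o(\log n)),
\]
since $\tau^2/(n\sigma^2) = o(\log n)$ by hypothesis. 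Multiplying the three estimates and absorbing all the $o(\log n)$ losses into the $(\delta/2)\log n$ slack then yields the claimed inequality for all $n$ sufficiently large.

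The main technical obstacle is the moderate-deviation regime where $\tau$ may grow as fast as $\sigma\sqrt{n\log n}$ rather than $\sigma\sqrt n$; a naive CLT gives only an $\Omega(1)$ lower bound for deviations of order $\sigma\sqrt n$, so one must invoke either a genuine local CLT for lattice distributions or a Cram\'er-type moderate-deviation expansion to produce the $\exp(-\tau^2/(2n\sigma^2))$ factor uniformly over the allowed range of $\tau$. A secondary delicate case is the nearly degenerate regime $b/a \to 1$, in which $(\sqrt a - \sqrt b)^2$ vanishes and the target bound reduces to $n^{-\delta/2}$; here a direct symmetric CLT estimate handles the claim without the tilted computation.
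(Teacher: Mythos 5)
Your overall strategy is the classical Cram\'er lower bound (tilt to zero mean, restrict to a window, pay the tilting factor, and lower bound the window probability by a local CLT), which is genuinely different from the paper's route: the paper instead over-tilts to a law $Q'$ whose per-coordinate mean is slightly \emph{negative} (of order $\min\{\sqrt{ab},\sqrt{b\log n/n}\}$), so that $\{\sum_k X_k\le-\tau\}$ becomes a \emph{typical} event under $Q'$ verifiable by Chebyshev alone, and then applies the data-processing lower bound $Q(E_n)\ge\exp\bigl((-nD(Q'\|Q)-\log 2)/Q'(E_n)\bigr)$, paying only the extra KL cost $\le \frac{\delta}{4}\frac{\log n}{n}$ per coordinate. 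That device avoids any local limit theorem and treats all parameter regimes uniformly; your route, if completed, would need a local CLT / moderate-deviation lower bound valid for triangular arrays in which $a=a_n$ may tend to $0$ as fast as $\Theta(\log n/n)$ (so $n\sigma^2\asymp\log n$) and the deviation is $\Theta(\sqrt{\log n})$ standard deviations; the classical fixed-distribution LLT you invoke does not literally cover this, so that ingredient would itself have to be proved (e.g.\ by direct trinomial/Stirling estimates), which is essentially the hard part of the lemma.

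The concrete gap is your treatment of the near-degenerate regime. The hypotheses only give $1\le a/b=\Theta(1)$, so $b/a$ need not be bounded away from $1$; e.g.\ $a=1/4$ and $\sqrt b=\sqrt a-\sqrt{2\log n/n}$ is admissible. In that case $an\asymp n$, so your deduction ``$an=O(\log n)$, hence $\tau=o(\log n)$'' fails, the constant-width window costs a prefactor $1/\sqrt{n\sigma^2}\asymp n^{-1/2}$, i.e.\ a loss of $\tfrac12\log n$ in the exponent, which exceeds the allowed $\tfrac{\delta}{2}\log n$ for small $\delta$. Your proposed fallback for $b/a\to1$ is incorrect as stated: there the target does \emph{not} reduce to $n^{-\delta/2}$, since $n(\sqrt a-\sqrt b)^2$ can still equal $2\log n$; moreover the mean of $\sum_k X_k$ is $n(a-b)\asymp\sqrt{n\log n}$ while its standard deviation is $\asymp\sqrt{n}$, so the event $\{\sum_k X_k\le-\tau\}$ is a moderate deviation of order $\sqrt{\log n}$ standard deviations, and a ``direct symmetric CLT estimate'' cannot produce the required factor $\exp(-n(\sqrt a-\sqrt b)^2-o(\log n))$. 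The regime is fixable within your framework --- take the window width $L\asymp\sqrt{n\sigma^2}$ and note $|t^\star|\sqrt{na}=O(\sqrt{\log n})$ and $|t^\star|\tau=o(\log n)$ even when $a\asymp1$ --- but then you are back to needing a uniform moderate-deviation lower bound over the window in exactly the regime your case split was meant to avoid, so as written the proof does not go through.
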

    
    To apply this lemma, 
    note that $\tau=O\left( \frac{n(a-b)}{\log^2 n} + \sqrt{\frac{na}{\log n} } \right)$, so 
    \begin{align*}
    \frac{\tau}{ \sqrt{a  n\log n }} & =  O\left( \left( \sqrt{a}-\sqrt{b} \right) \sqrt{\frac{n}{\log^5 n}}
    + \frac{1}{\log n} \right)
    =O\left( \frac{1}{\log n} \right) \\
    \frac{\tau}{ a n} & = O\left(\frac{1}{\log^2 n} + \frac{1}{\sqrt{a n \log n}}\right) =O\left( \frac{1}{\sqrt{\log n}}\right) ,
    \end{align*} 
    where we used the fact that $m  \le n / \log^2 n$, $b \leq a$,
    $\sqrt{a}-\sqrt{b} = O(\sqrt{\log (n)/n})$, 
    and $an=\omega(1)$.
    Then, applying  \prettyref{lmm:inverse_Chernoff} with $\delta = \frac{\epsilon}{4}$
    yields
    \begin{align}
        \expect{I_n} 
        \ge  
        \frac{1}{2} m^2 \exp\left(-|T^c| \left(\sqrt{a}-\sqrt{b} \right)^2 - \frac{\epsilon}{8} \log n\right) 
        \ge n^{\epsilon -o(1)}, 
        \label{eq:EIn}
    \end{align} 
    where the last inequality holds by assumption that $\left(\sqrt{a}-\sqrt{b} \right)^2 \le 2 \left(1-\epsilon\right)\frac{\log n}{n}$.
    Next, we show that $\Var{\left[I_n\right]}/\expect{I_n}^2=o(1)$,
    which, by Chebyshev's inequality, implies that $I_n \ge \frac{1}{2} \expect{I_n}$ with high probability. 
    
    Write
    \begin{align*}
        \Var{\left[I_n\right]} 
        & = \sum_{(i,j),(i',j')\in T'} \left( \prob{X_{ij}\le -\tau, X_{i'j'}\le -\tau }-\prob{X_{ij}\le -\tau }\prob{ X_{i'j'}\le -\tau} \right) \leq \termI +\termII,
    \end{align*}
     where
    \begin{align*}
        \termI & =  \sum_{(i,j)\in T'}  \prob{X_{ij}\le  -\tau } = \expect{I_n}, \\
        \termII & =  \sum_{(i,j),(i,j')\in T', j \neq j'}   \prob{X_{ij}\le  0, X_{ij'}\le 0} 
        +  \sum_{(i,j),(i',j)\in T', i \neq i'}  \prob{X_{ij}\le  0, X_{i'j}\le 0}.
    \end{align*}
    To  bound $\termII$, fix $(i,j),(i,j')\in T'$ such that $j\neq j'$. 
    Note that $ \{X_{ij,k}\}_{j\in T_2} \iiddistr \Bern\left(p_{00} \right)$ conditional on $ A_{ik}= 1, B_{ik} =1$; $  \{- X_{ij,k}\}_{j\in T_2} \iiddistr \Bern\left(p_{10} \right)$ conditional on $ A_{ik}= 1, B_{ik} =0$; $  \{-X_{ij,k}\}_{j\in T_2}\iiddistr \Bern\left(p_{01} \right)$ conditional on $A_{ik}= 0, B_{ik} =1 $; $  \{X_{ij,k}\}_{j\in T_2} \iiddistr \Bern\left(p_{11} \right) $ conditional on $A_{ik}= 0, B_{ik} =0$. 
    Then, for $\ell,\ell'\in \{0,1\}$, we define
    \[
    M_{\ell \ell'} = \left|\left\{ k\in T^c | A_{ik}= \ell, B_{ik} = \ell'\right\} \right|,
    \]
    and get that for any $\lambda \ge 0$,
    \begin{align*}
        &\prob{\sum_{k \in T^c} X_{ij,k} \le 0, \sum_{k \in T^c} X_{ij',k} \le 0}\\
        & \overset{(a)}{=} \expect{ \prob{\sum_{k \in T^c} X_{ij,k} \le 0 \bigg| M_{11}, M_{10}, M_{10}, M_{00}}  \prob{\sum_{k \in T^c} X_{ij',k} \le 0 \bigg| M_{11} , M_{10} , M_{10}, M_{00}} } \\\
        & \overset{(b)}{=}  \expect{ \gamma_{00}^{2M_{11}} \gamma_{01}^{2M_{10}} \gamma_{10} p_{10}^{2M_{10}}
            \gamma_{11}^{2M_{00}} }\\
        & \overset{(c)}{=} \left(
        \gamma_{00}^2 p_{11} + \gamma_{01}^{2} p_{10} + \gamma_{10}^{2} p_{01} +  \gamma_{11}^{2} p_{00}
        \right)^{|T^c|},
    \end{align*}
    where $\gamma_{\ell,\ell'}=1- p_{\ell\ell'}+ p_{\ell \ell'} e^{(2 | \ell-\ell'| -1) \lambda }$;
    $(a)$ holds because $\sum_{k \in T'} X_{ij,k}$ and $\sum_{k \in T'} X_{ij',k}$ are independent  conditional on $M_{11}, M_{10}, M_{10}, M_{00}$; $(b)$ holds by applying the Chernoff bound; $(c)$ holds by applying the MGF of the multinomial distribution, since $M_{11},M_{10},M_{10},M_{00} \sim \mathrm{Multi} (|T^c|, p_{11},p_{10},p_{01},p_{00})$.
Choosing  $e^{\lambda} = \sqrt{\frac{p_{00}p_{11}}{p_{01}p_{10}}} = \sqrt{\frac{a}{b}}$ where $a = 2 p_{00}p_{11}$ and $b = 2 p_{01}p_{10}$, we have
\begin{align*}
        \prob{\sum_{k \in T^c} X_{ij,k} \le 0, \sum_{k \in T^c} X_{ij',k} \le 0}\leq \left(1-\frac{3}{2} \left(\sqrt{a}-\sqrt{b}\right)^2\right)^{|T^c|}\,.
    \end{align*}
The same upper bound applies to  any $(i,j),(i',j)\in T'$ such that $i\neq i'$. 
    
    Therefore, we get that 
    \begin{align*}
        \termII 
        \le 2m^3  \left(1-\frac{3}{2} \left(\sqrt{a}-\sqrt{b}\right)^2\right)^{|T^c|}
        & \le 2 m^3 \exp \left( -\frac{3}{2} \left(\sqrt{a}-\sqrt{b}\right)^2 |T^c| \right).
    \end{align*}
    Hence, by Chebyshev's inequality,
    \begin{align*}
        \prob{I_n \le \frac{1}{2}\expect{I_n}}
        & \le \frac{\Var{\left[I_n\right]}}{\frac{1}{4}\left(\expect{I_n}\right)^2}  \\
        & \le \frac{4}{\expect{I_n} } + \frac{8 \times \termII}{\left(\expect{I_n}\right)^2} \\
        & \overset{(a)}{\le} n^{-\epsilon+o(1) } +  \frac{32}{m} \exp\left( \frac{1}{2} \left(\sqrt{a}-\sqrt{b}\right)^2 |T^c|
        + \frac{\epsilon}{8} \log n \right) \\
        & \overset{(b)}{\le} n^{-7\epsilon/8 +o(1)}
    \end{align*}
    where $(a)$ is due to \prettyref{eq:EIn}; $(b)$ holds by the assumption that $\left(\sqrt{a}-\sqrt{b}\right)^2 \le 2 \left(1-\epsilon\right)\frac{\log n}{n}$. 
    
    Therefore, with probability $1-n^{-\Omega(\epsilon)}$ there exists some $(i,j) \in T'$ such that $X_{ij} \le -\tau$. By definition of  $ T'$, we have $Y_{ij} < \tau$ and hence $X_{ij}+Y_{ij}<0$. Thus, we arrive at the desired claim~\prettyref{eq:desired_negative_exact}. 

    \begin{proof}[Proof of \prettyref{lmm:inverse_Chernoff}]
        Let $ E_n=\left\{\sum_{k=1}^n X_k \le - \tau \right\} $. Let $Q$ denote the distribution of $X_k$. 
        The following large-deviation lower  estimate based on the data processing inequality is well-known (see \cite[Eq.~(5.21), p.~167]{ckbook} and \cite[Lemma 3]{HajekWuXu_one_info_lim15}):
        For any $Q'$,
        \begin{align}
            Q \left(E_n \right) \ge \exp\left(\frac{- n D\left(Q'\| Q \right) -\log 2}{Q'(E_n)}\right). 
            \label{eq:Q_E_n}
        \end{align}

        Choose
        \[
        Q'= \frac{\alpha-\beta}{2} \delta_{+1} + \frac{\alpha +\beta}{2} \delta_{-1} + (1-\alpha) \delta_0 
        \,,
        \]
        where
        \[
        \alpha \triangleq \frac{2 \sqrt{ab}}{1-\left(\sqrt{a}-\sqrt{b}\right)^2}, \quad \text{ and } \quad  
        \beta \triangleq \min \left\{ \frac{\alpha}{2}, \, \frac{\delta}{8} \sqrt{\frac{ b \log n }{ n}} \right\}\, .
        \]
        Note that under the assumption that $ 1\le a/b= \Theta(1)$ and $\left(\sqrt{a}-\sqrt{b}\right)^2 \le \frac{\log n}{n}$,
        we have that $2b \le \alpha=\Theta(a)$. Moreover, since $\tau=o(\sqrt{an \log n})$ and $\tau=o(an)$, it follows that $\tau=o(\beta n)$.
        Then, 
        \begin{align*}
            Q' (E_n^c) =  Q'  \left(\sum_{k=1}^n Y_k \ge - \tau \right) 
            & \stepa{\le}  Q'  \left(\sum_{k=1}^n \left(Y_k - \expect{Y_k} \right)  \ge \beta n- \tau \right)\\
            & \stepb{\le} \frac{\sum_{k=1}^n \Var{\left[Y_k\right]}}{\left(\beta n - \tau  \right)^2 } \\
            &\stepc{\le}  \frac{  \alpha n }{\left(\beta n - \tau  \right)^2 }  \stepd{=} o(1) \, ,
        \end{align*}
        where $(a)$ holds because $ \expect{Y_k}= -  \beta $; $(b)$ follows from by Chebyshev's inequality as $\{Y_k\}_{i\in [n]}$ are independent and $\tau=o(\beta n)$; $(c)$ holds because $  \sum_{k=1}^n\Var{\left[Y_k\right]} \le \sum_{k=1}^n \expect{Y_k^2} \le  \alpha n$; $(d)$ holds because $\tau =o\left(\beta n \right)$, 
        and $ \frac{\alpha }{\beta^2 n} = \max\left\{ \frac{64  \alpha }{\delta^2 b \log n}, \frac{4}{\alpha n} \right\}= o(1)$, in view of that $\delta=\Theta(1)$, $\alpha= \Theta(a)=\Theta(b)$ and $\alpha n = \Theta(a n) = \omega(1)$.
        
        Next, we upper bound $ D \left(Q'\| Q \right)$. We get
        \begin{align*}
            D \left(Q'\| Q \right) 
            & = \frac{\alpha-\beta}{2} \log \frac{\alpha-\beta}{2a} +\frac{\alpha+\beta}{2} \log  \frac{\alpha+\beta}{2b} + (1-\alpha) \log \frac{1-\alpha}{1-a-b}  \\
            & \stepa{=} - \log \left(1- \left(\sqrt{a}-\sqrt{b}\right)^2\right) +\frac{\alpha}{2} \log \frac{\left(\alpha^2-\beta^2\right)}{\alpha^2}+\frac{\beta}{2} \log \frac{a(\alpha+\beta)}{b(\alpha-\beta)} \\
            & \stepb{\le} - \log \left(1- \left(\sqrt{a}-\sqrt{b}\right)^2\right)  +  \frac{\beta^2}{\alpha-\beta} +\beta \frac{ \sqrt{a}-\sqrt{b}}{\sqrt{b}} \\
            & \stepc{\le} \left(\sqrt{a}-\sqrt{b}\right)^2 + \frac{\delta \log n}{ 4n}
            \, ,
        \end{align*}
        where $(a)$ holds because $\alpha = \frac{2 \sqrt{ab}}{1-\left(\sqrt{a}-\sqrt{b}\right)^2}$; $(b)$ holds 
        because 
        $ \log \frac{\alpha+\beta}{\alpha-\beta} \le \frac{2\beta}{\alpha-\beta}$
        and $\frac{1}{2} \log \frac{a}{b} \le \frac{\sqrt{a}-\sqrt{b}}{\sqrt{b}}$,
        in view of $\log(1+x) \le x$; 
        $(c)$ holds for all sufficiently large $n$ because $\left(\sqrt{a}-\sqrt{b}\right)^2 \le 2\frac{\log n}{n}$
        so that $\log(1-(\sqrt{a}-\sqrt{b})^2)=-(\sqrt{a}-\sqrt{b})^2 
        +O\left(\log^2(n)/n^2\right)$,
        $\beta \le \frac{1}{2}\alpha$ and $\beta \le \frac{\delta }{8} \sqrt{\frac{b\log n}{n}}$ so that 
        $\frac{\beta^2}{\alpha-\beta} \le \frac{2\beta^2}{\alpha} \le \frac{\delta^2 b\log n}{32\alpha n} \le \frac{\delta^2 \log n}{32 n} $,
        and $\beta \frac{ \sqrt{a}-\sqrt{b}}{\sqrt{b}} \le  \frac{\delta }{8 } \sqrt{\frac{\log n}{n}} \left(\sqrt{a} -\sqrt{b}\right) \le  \frac{\sqrt{2}\delta\log n }{8n }$. 
        
        
        
        In conclusion,    it follows from~\prettyref{eq:Q_E_n} that   
        \begin{align*}
            Q \left(E_n \right) &\ge \exp\left( - \left(1+o(1) \right)   \left( n \left(\sqrt{a}-\sqrt{b}\right)^2  + \frac{\delta}{4} \log n+ \log 2 \right) \right) \\
            &\ge  \exp\left( -  n \left(\sqrt{a}-\sqrt{b}\right)^2  - \frac{\delta}{2} \log n\right),
        \end{align*}
        where the last inequality holds for all sufficiently large $n$ in view of $n \left(\sqrt{a}-\sqrt{b}\right)^2 \le 2\log n $.
    \end{proof}

    \appendix 
    

    \section{Moment Generating Functions Associated with Edge Orbits} \label{app:expect_exp_cal}

            \begin{lemma}\label{lmm:expect_exp_AB_pi}
        Fixing $\pi$ and $\hpi$, where $\pi$ is the latent permutation under $\calP$. 
        For any edge orbit $\orbit$ of $\sigma= \pi^{-1} \circ \hpi $ with $|\orbit|=k$ 
        and  $t\ge0$, 
        let $M_k \triangleq \Expect \left[ \exp\left(t \sum_{(i,j)\in O} \overline{A}_{\hpi(i)\hpi(j) } \overline{B}_{ij}\right) \right]$,
        where 
        $\overline{A}_{ij}=\overline{A}_{ij}-\Expect A_{ij}$ and
        $\overline{B}_{ij}=\overline{B}_{ij}-\Expect B_{ij}$.
        \begin{itemize}
            \item For \ER random graphs, 
            \begin{align}
                M_k=\left(\frac{T - \sqrt{T^2 -4D}}{2}\right)^{k} + \left(\frac{T + \sqrt{T^2 -4D}}{2}\right)^{k}, \label{eq:ER_expect_exp_AB_pi}
            \end{align}
            where
            \begin{align*}
            T &= e^{t(1-q)^2} qs + 2 e^{-tq(1-q)}q(1-s) + e^{tq^2}(1-2q+qs) \\ D &= \left(e^{t(1-q)^2+tq^2}-e^{-2tq(1-q)}\right)q(s-q)    
            \end{align*}
            and $q\triangleq ps$.   
            \item For Gaussian   model, for $t \le \frac{1}{1+\rho}$,
            \begin{align}
                M_k = \left[\left(\frac{\lambda_1+\lambda_2}{2}\right)^k - \left(\frac{\lambda_1-\lambda_2}{2}\right)^{k}\right]^{-1} \label{eq:GW_expect_exp_AB_pi}
            \end{align}
            where $\lambda_1 = \sqrt{ (1+\rho t)^2 - t^2 }$ and $\lambda_2 = \sqrt{ (1- \rho t)^2 - t^2 }$.
        \end{itemize}
        Moreover, $M_k \le M_2^{k/2}$ for all $k\ge 2$.
    \end{lemma}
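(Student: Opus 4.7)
The plan is to exploit the cyclic structure of the edge orbit $O$. Label its elements $e_1,\ldots,e_k$ so that $\sigmae(e_j)=e_{j+1\bmod k}$. Since $\hpi=\pi\circ\sigma$, for each $e_j=(i_j,j_j)$ we have $(\hpi(i_j),\hpi(j_j))=\pie(\sigmae(e_j))=\pie(e_{j+1})$. Setting $X_j\triangleq\overline A_{\pie(e_j)}$ and $Y_j\triangleq\overline B_{e_j}$, the pairs $(X_j,Y_j)_{j=1}^k$ are i.i.d.\ draws from the centered joint distribution, and
\[
\sum_{(i,j)\in O}\overline A_{\hpi(i)\hpi(j)}\overline B_{ij}=\sum_{j=1}^k X_{j+1}Y_j\qquad(\text{indices mod }k).
\]

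To evaluate $M_k=\Expect\bigl[\exp\bigl(t\sum_j X_{j+1}Y_j\bigr)\bigr]$, I would condition on $(Y_1,\ldots,Y_k)$; since $X_{j+1}$ depends only on $Y_{j+1}$ among the $Y$'s, this yields $M_k=\Expect_Y\bigl[\prod_j h(Y_j,Y_{j+1})\bigr]$ where $h(y,y')\triangleq\Expect\bigl[e^{t\overline X y}\mid\overline Y=y'\bigr]$. In the \ER case, defining the $2\times2$ transfer matrix $T(y,y')\triangleq\overline P_{\overline Y}(y')\,h(y,y')=\sum_x\overline P(x,y')e^{txy}$ with $y,y'\in\{-q,1-q\}$, the cyclic sum becomes $M_k=\Tr(T^k)=\lambda_+^k+\lambda_-^k$ where $\lambda_\pm=(T\pm\sqrt{T^2-4D})/2$ are the eigenvalues of $T$. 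Direct expansion of the four matrix entries recovers precisely the claimed trace $T$ and determinant $D$; both eigenvalues are nonnegative because $T$ has positive entries (Perron--Frobenius) and $\det T\ge 0$ under $s\ge q$ and $t\ge 0$.

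For the Gaussian case, the same conditioning gives $\Expect\bigl[\exp(tX_{j+1}Y_j)\mid Y_{j+1}\bigr]=\exp\bigl(t\rho Y_jY_{j+1}+\tfrac12 t^2(1-\rho^2)Y_j^2\bigr)$, so $M_k=\Expect_{Y\sim\calN(0,I_k)}\bigl[\exp(\tfrac12 Y^\top M Y)\bigr]=\det(I-M)^{-1/2}$, where $M$ is the symmetric circulant with diagonal $t^2(1-\rho^2)$ and nearest-neighbor off-diagonals $t\rho$. Its eigenvalues are $t^2(1-\rho^2)+2t\rho\cos(2\pi j/k)$, and a short computation using $a^2+b^2+2ab\cos\theta=|a+be^{i\theta}|^2$ with $a=(\lambda_1+\lambda_2)/2$, $b=(\lambda_2-\lambda_1)/2$, together with the roots-of-unity identity $\prod_{j=0}^{k-1}(a+b e^{i2\pi j/k})=a^k-(-b)^k$, rewrites $\det(I-M)$ as $\bigl[\bigl(\tfrac{\lambda_1+\lambda_2}{2}\bigr)^k-\bigl(\tfrac{\lambda_1-\lambda_2}{2}\bigr)^k\bigr]^2$. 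Taking the reciprocal square root yields the claimed formula, with $t\le 1/(1+\rho)$ ensuring $I-M\succ 0$ and the positivity of the bracket.

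Finally, the comparison $M_k\le M_2^{k/2}$ for $k\ge 2$ is handled separately in each case. In the \ER case, since $\lambda_\pm\ge 0$, $\ell^p$-norm monotonicity on two-element nonnegative sequences gives $(\lambda_+^k+\lambda_-^k)^{1/k}\le(\lambda_+^2+\lambda_-^2)^{1/2}$, i.e.\ $M_k\le M_2^{k/2}$. In the Gaussian case, writing $a=(\lambda_1+\lambda_2)/2$ and $c=(\lambda_1-\lambda_2)/2\ge 0$, one has $M_k^{-1}=a^k-c^k$ and $M_2^{-1}=\lambda_1\lambda_2=a^2-c^2$, so the claim reduces to $a^k-c^k\ge(a^2-c^2)^{k/2}$ for $a\ge c\ge 0$; dividing by $a^k$ and setting $u=(c/a)^2\in[0,1]$ this becomes $u^{k/2}+(1-u)^{k/2}\le 1$, which follows from $x^{k/2}\le x$ on $[0,1]$ for $k\ge 2$. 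The main obstacle I anticipate is the roots-of-unity/circulant manipulation needed to put the Gaussian determinant into the precise stated form; the \ER transfer-matrix calculation and the concluding power-mean arguments are routine bookkeeping.
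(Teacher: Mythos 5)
Your proposal is correct, and its skeleton is the same as the paper's: decompose the exponent along the edge orbit into i.i.d.\ pairs arranged in a cycle, condition on one coordinate, and evaluate the resulting cyclic product in closed form, finishing with the same power-mean inequalities ($x^k+y^k\le(x^2+y^2)^{k/2}$ in the \ER case, and the reduction of $M_k\le M_2^{k/2}$ to $u^{k/2}+(1-u)^{k/2}\le 1$, which is equivalent to the paper's $(a+b)^k-(a-b)^k\ge(4ab)^{k/2}$). The two cosmetic-but-real divergences are: (i) you condition on the $B$-values whereas the paper conditions on the $A$-values, so your $2\times 2$ transfer matrix is not literally the paper's matrix $M$; this is harmless because only its trace and determinant enter, and a direct expansion (which you assert rather than carry out) does give the same $T$ and $D$ — the cross terms cancel and the determinant collapses to $q(s-q)\bigl(e^{t(1-q)^2+tq^2}-e^{-2tq(1-q)}\bigr)$; (ii) in the Gaussian case you evaluate $\Expect[\exp(\tfrac12 Y^\top MY)]=\det(I-M)^{-1/2}$ by diagonalizing the symmetric circulant via roots of unity, whereas the paper performs an explicit change of variables $X_\iii=\tfrac{\lambda_1+\lambda_2}{2}a_\iii+\tfrac{\lambda_1-\lambda_2}{2}a_{\iii+1}$ and computes the cyclic bidiagonal Jacobian determinant directly. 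Both routes produce the same bracket $\bigl(\tfrac{\lambda_1+\lambda_2}{2}\bigr)^k-\bigl(\tfrac{\lambda_1-\lambda_2}{2}\bigr)^k$; your circulant computation is arguably more systematic (it also transparently explains the constraint $t\le\tfrac{1}{1+\rho}$ as positive semidefiniteness of $I-M$, with the boundary case degenerate in both treatments), while the paper's change of variables avoids complex roots of unity. One small point to keep in mind if you write this up: your eigenvalue formula for the circulant should be checked at the wrap-around cases $k=1,2$ (where the nearest-neighbor entries accumulate), but it does remain valid there, so no gap results.
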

    \begin{proof}
        For ease of notation, let $\{(a_{\iii},b_{\iii})\}_{\iii=1}^k$ be independently and identically distributed pairs of random variables with joint distribution $P$.
        Let $a_{k+1}=a_1$ and $b_{k+1}=b_1$. 
        Since $O$ is an edge orbit of $\sigmae$, we have $\{A_{\pi(i)\pi(j)}\}_{(i,j)\in \orbit}=\{A_{\hpi(i)\hpi(j)}\}_{(i,j)\in \orbit}$ and $(\hpi(i),\hpi(j)) =  (\pi(\sigma(i)), \pi(\sigma(j) ) )$. 
        Then, we have that 
            \begin{align*}
            M_k
            & = \Expect\left[\exp\left(\sum_{\iii =1}^k t (a_{\iii +1} - \expect{a_{\iii +1}})  (b_{\iii}-\expect{b_{\iii +1}})\right)\right]\\
            & = \Expect\left[ \expect{\exp\left(\sum_{\iii=1}^k t (a_{\iii +1} - \expect{a_{\iii +1}})  (b_{\iii}-\expect{b_{\iii +1}})\right)\bigg | a_1,a_2,\cdots, a_k}\right]\\
            & = \Expect\left[ \prod_{\iii=1}^k  \expect{\exp \bigg( t \left( a_{\iii +1} - \expect{a_{\iii +1}}\right)  \left(b_{\iii}-\expect{b_{\iii +1}}\right)\bigg)\bigg | a_{\iii}, a_{\iii+1}}\right].
        \end{align*}
        \begin{itemize}
            \item For \ER random graphs,
            $\Expect[a_i]=\Expect[b_i]=q$, and
            $
            M_k  =  \mathrm{tr}\left(M^{k}\right),
            $
            where $M$ is a $2\times 2$ row-stochastic matrix with rows and columns indexed by $\{0,1\}$ and 
            \begin{align*}
                M(\ell,m) & = \Expect \left[\exp\bigg( t \left( a_{\iii +1} - q\right)  \left(b_{\iii}-q\right)\bigg)  \bigg | a_{\iii}= \ell, a_{\iii+1} = m \right] \Prob\left(a_{\iii+1} = m \right). \\
                &= \Expect \left[\exp\bigg( t \left( m - q\right)  \left(b_{\iii}-q\right)\bigg)  \bigg || a_{\iii} = \ell \right] \Prob\left(a_{\iii+1} = m \right). 
            \end{align*}
            Explicitly, we have
            \[
            M = \begin{pmatrix}
               e^{tq^2} \left(1-\eta\right)\left(1-q\right) + e^{-tq(1-q) } 
               \eta\left(1-q\right)  & e^{-tq(1-q) }   \left(1-\eta\right)q + e^{t(1-q)^2}\eta q  \\
                e^{tq^2}  s \left(1-q\right) + e^{-tq(1-q) } 
               \left(1-s\right)\left(1-q\right)   & e^{-tq(1-q) }  \left(1-s\right)  q + e^{t(1-q)^2} sq
            \end{pmatrix} \, ,
            \]
            where $ \eta=\frac{q(1-s)}{1-q} $. 
            The eigenvalues of $M$ are $\frac{T - \sqrt{T^2 -4D}}{2} $ and $\frac{T + \sqrt{T^2 -4D}}{2}$, where
            \begin{align*}
                T  & \triangleq \mathrm{Tr}(M) =e^{t(1-q)^2} qs  + 2 e^{-tq(1-q)}q(1-s) + e^{tq^2}(1-2q+qs) \\
                D & \triangleq  \det(M) =\left(e^{t(1-q)^2+tq^2}-e^{-2tq(1-q)}\right)q(s-q) 
            \, .
            \end{align*}
        %
            \item For Gaussian   model,
            $\Expect[a_i]=\Expect[b_i]=0$, and
            \begin{align*}
                M_k
                = \Expect\left[ \prod_{\iii=1}^k  \exp\left( t \rho  a_{\iii} a_{\iii+1}\, +  \frac{1}{2} t^2 \left(1-\rho^2\right)
                 a_{\iii+1}^2  \right)\right],
            \end{align*}
            where the equality follows from  $b_{\iii} \sim \calN(\rho a_{\iii}, 1-\rho^2)$ conditional on $a_{\iii} $ 
            and $\Expect\left[\exp\left(t Z\right)\right] = \exp\left(t \mu + t^2\nu^2/2\right)$ for $Z\sim \calN(\mu,\nu^2)$. 

            Let  $\lambda_1 = \sqrt{\left(1+\rho t \right)^2 - t^2 }$ and $\lambda_2 = \sqrt{\left(1- \rho t \right)^2 - t^2 }$, where $t \le \frac{1}{1+\rho}$.
                By change of variables, 
            \begin{align*}
                M_k
                & = \int \cdots \int \prod_{\iii=1}^{k} \frac{1}{\sqrt{2\pi} } \exp\left(-\frac{\left(\frac{\lambda_1+\lambda_2}{2} a_{\iii} + \frac{\lambda_1-\lambda_2}{2} a_{\iii+1}\right)^2}{2} \right)\mathrm{d} a_1 \cdots \mathrm{d} a_{k}\\
                & \stepa{=} \int \cdots \int \prod_{\iii=1}^{k} \frac{1}{\sqrt{2\pi} } \exp\left(-\frac{X_{\iii}^2}{2} \right)\mathrm{d} X_1 \cdots \mathrm{d} X_{k} \det(J^{-1})\\
                & \stepb{=} \frac{1}{\left(\frac{\lambda_1+ \lambda_2}{2}\right)^{k}-\left(\frac{\lambda_1- \lambda_2}{2}\right)^{k}}, 
            \end{align*}
            where (a) holds by letting $X_{\iii} = \frac{\lambda_1+\lambda_2}{2} a_{\iii}+ \frac{\lambda_1-\lambda_2}{2} a_{\iii+1}$, and denoting $J$ as the Jacobian matrix with $J_{ij} = \frac{\partial X_k }{\partial a_j}$ whose inverse matrix is
            \begin{align*}
                J = 
                \begin{pmatrix} \frac{\lambda_1+\lambda_2}{2}      & \frac{\lambda_1- \lambda_2}{2}      & 0     & \cdots     & 0 \\
                    0   &   \frac{\lambda_1+\lambda_2}{2}          & \frac{\lambda_1- \lambda_2}{2}     & \cdots     &0 \\
                    0  &  0     &    \frac{\lambda_1+\lambda_2}{2}        & \cdots     & 0 \\
                    \vdots  & \vdots     & \vdots     & \ddots     & \frac{\lambda_1- \lambda_2}{2} \\
                    \frac{\lambda_1- \lambda_2}{2}  & 0 &  \cdots  0 & 0 &   \frac{\lambda_1+\lambda_2}{2}    \\
                \end{pmatrix} \, ;  
            \end{align*}
            (b) holds because $\det(J^{-1}) = \det(J)^{-1}$, where $\det(J) = \left(\frac{\lambda_1+ \lambda_2}{2}\right)^{k}-\left(\frac{\lambda_1- \lambda_2}{2}\right)^{k}$. 
        \end{itemize}

        Finally, we prove that $M_k \le (M_2)^{k/2}$ for $k \ge 2$.
        Indeed, for the \ER graphs, this simply follows from $x^k + y^k \le(x^2 +y^2 )^{k/2}$ for $x, y \ge 0$ and $k \in \naturals$. 
        Analogously, for the Gaussian   model, this follows from $(a+b)^k - (a-b)^k \ge (4ab)^{k/2}$, which holds by rewriting $(a+b)^2=(a-b)^2 + 4 ab$ and letting $x=(a-b)^2$ and $y=4ab$. 
    \end{proof}
        
        
        \begin{lemma}\label{lmm:expect_exp_AB_pi_minus_AB_pi_*}
        Fixing $\pi$ and $\tpi$, where $\pi$ is the latent permutation under $\calP$. For any edge orbit $\orbit$ of $\sigma=   \pi^{-1} \circ \hpi$ with $|O|=k$ and $t>0$, let $L_k \triangleq\Expect \left[ \exp\left(t \sum_{(i,j)\in O}A_{\hpi(i)\hpi(j)}B_{ij}- \sum_{(i,j)\in O}A_{\pi(i)\pi(j)}B_{ij}\right)  \right]$.
        \begin{itemize}
                \item For general \ER random graphs model, 
            \begin{align}
                L_k
                & = \left(\frac{T - \sqrt{T^2-4D}}{2}\right)^{k} + \left(\frac{T + \sqrt{T^2 -4D}}{2}\right)^{k},  \label{eq:ER_expect_exp_AB_pi_minus_AB_pi_*}
            \end{align}
            where $T=1$ and $D =- \left(p_{01}p_{10}\left(e^t-1\right) +p_{00}p_{11}\left(e^{-t}-1\right) \right)$. 
            
                
            \item For Gaussian   model, for $ t \le  \frac{1}{2(1-\rho)}$, 
            \begin{align}
                L_k  = \left[\left(\frac{\lambda_1+ \lambda_2}{2}\right)^{k}-\left(\frac{\lambda_1- \lambda_2}{2}\right)^{k}\right]^{-1},  \label{eq:GW_expect_exp_AB_pi_minus_AB_pi_*}
            \end{align}
            where $\lambda_1 = 1$ and $\lambda_2 = \sqrt{1 + 4 t \rho -4 t^2 \left(1-\rho^2 \right)}$.
        \end{itemize}
        Moreover, $L_k \le L_2^{k/2}$ for all $k\ge 2$.
    \end{lemma}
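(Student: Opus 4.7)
The plan is to proceed in close parallel with the proof of the preceding Lemma for $M_k$, since the two MGFs differ only in that we now consider uncentered products and take the difference between $\hpi$ and $\pi$ rather than a single permutation. First, I parameterize the orbit cyclically: write $O=\{(i_1,j_1),\dots,(i_k,j_k)\}$ with $\sigmae((i_\ell,j_\ell))=(i_{\ell+1},j_{\ell+1})$ (indices mod $k$), and set $a_\ell = A_{\pi(i_\ell)\pi(j_\ell)}$, $b_\ell = B_{i_\ell j_\ell}$. The pairs $(a_\ell,b_\ell)$ are i.i.d.\ with joint distribution $P$, and the identity $(\hpi(i_\ell),\hpi(j_\ell))=(\pi(i_{\ell+1}),\pi(j_{\ell+1}))$ yields $A_{\hpi(i_\ell)\hpi(j_\ell)}=a_{\ell+1}$. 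Hence
\[
\sum_{(i,j)\in O}\bigl(A_{\hpi(i)\hpi(j)}-A_{\pi(i)\pi(j)}\bigr)B_{ij}=\sum_{\ell=1}^{k}(a_{\ell+1}-a_\ell)\,b_\ell,
\]
so $L_k$ factorizes through this cyclic chain and is amenable to the same transfer-operator / Gaussian-integral treatment as $M_k$.

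For the \ER case, I condition on $(a_1,\dots,a_k)$ and integrate out the $b_\ell$'s using the conditional law determined by $(p_{ab})$. Writing $\prob{a_\ell=\ell'}\,\Expect[e^{t(a_{\ell+1}-a_\ell)b_\ell}\mid a_\ell=\ell'] = G(\ell',a_{\ell+1})$ where $G(\ell,m)\triangleq\sum_b p_{\ell b}\,e^{t(m-\ell)b}$ for $\ell,m\in\{0,1\}$, the cyclic structure yields $L_k=\tr(G^k)$. A direct computation gives $\tr G=1$ and $\det G=-\bigl(p_{01}p_{10}(e^{t}-1)+p_{00}p_{11}(e^{-t}-1)\bigr)$, matching the claimed $T,D$, and \prettyref{eq:ER_expect_exp_AB_pi_minus_AB_pi_*} follows from the two-eigenvalue identity $\tr(G^k)=\mu_+^k+\mu_-^k$ with $\mu_\pm=(T\pm\sqrt{T^2-4D})/2$. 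For the Gaussian case, I integrate out $b_\ell\mid a_\ell\sim\calN(\rho a_\ell,1-\rho^2)$ first, which produces the exponent $\tfrac{c}{2}\sum_{\ell}(a_{\ell+1}-a_\ell)^2$ with $c\triangleq t^2(1-\rho^2)-t\rho$. Combining with the i.i.d.\ standard-normal densities of $a_\ell$ gives a multivariate Gaussian integral whose quadratic form can be factored as $\sum_\ell(\alpha a_\ell+\beta a_{\ell+1})^2$, where $\alpha=(\lambda_1+\lambda_2)/2$, $\beta=(\lambda_1-\lambda_2)/2$, $\lambda_1=1$, $\lambda_2=\sqrt{1-4c}=\sqrt{1+4t\rho-4t^2(1-\rho^2)}$. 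Applying the same circulant-Jacobian change of variables $X_\ell=\alpha a_\ell+\beta a_{\ell+1}$ used in the preceding lemma reduces the integral to $|\det J|^{-1}$, yielding \prettyref{eq:GW_expect_exp_AB_pi_minus_AB_pi_*}. The restriction $t\le\frac{1}{2(1-\rho)}$ is exactly the condition under which the integral converges (i.e.\ the factored form has non-negative determinant).

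Finally, for the submultiplicativity bound $L_k\le L_2^{k/2}$ I re-use the two elementary inequalities that appear in the proof of $M_k\le M_2^{k/2}$: for the \ER case, $\mu_+^k+\mu_-^k\le(\mu_+^2+\mu_-^2)^{k/2}$ (after replacing $\mu_\pm$ by $|\mu_\pm|$ if one eigenvalue is negative, and using $L_k\ge 0$ as an expectation of a non-negative quantity); for the Gaussian case, $(a+b)^k-(a-b)^k\ge (4ab)^{k/2}$ applied with $a=\lambda_1/2$, $b=\lambda_2/2$. The step I expect to require the most care is the Gaussian Jacobian computation: the relevant circulant bidiagonal determinant is $\alpha^k-(-\beta)^k$ rather than a clean $\alpha^k-\beta^k$, and one must check that the parity discrepancy is consistent with the positivity of $L_k$ and with the stated formula—this mirrors, and can be resolved in the same way as, the analogous calculation for $M_k$ in the previous lemma.
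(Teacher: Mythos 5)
Your proposal is correct and follows essentially the same route as the paper's proof: the cyclic parameterization of the edge orbit reducing to i.i.d.\ pairs, the $2\times 2$ transfer matrix with trace $1$ and determinant $D$ for the \ER case, the conditional-Gaussian integration and quadratic-form factorization with a circulant Jacobian for the Gaussian case, and the same two elementary inequalities to get $L_k \le L_2^{k/2}$. The parity subtlety you flag in the circulant determinant ($\alpha^k-(-\beta)^k$ versus $\alpha^k-\beta^k$) is genuine but only affects the exact formula for odd $k$; it changes neither $L_2$ nor the bound $L_k\le L_2^{k/2}$, which is all that is used downstream.
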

        \begin{proof}
        For ease of notation, let $\{(a_{\iii},b_{\iii})\}_{\iii=1}^k$ be independently and identically distributed pairs of random variables with joint distribution $P$. Let $a_{k+1}=a_1$ and $b_{k+1}=b_1$. 
        Since $O$ is an edge orbit of $\sigmae$, we have $\{A_{\pi(i)\pi(j)}\}_{(i,j)\in \orbit}=\{A_{\hpi(i)\hpi(j)}\}_{(i,j)\in \orbit}$ and $(\hpi(i),\hpi(j)) =  \pi(\sigma(i), \sigma(j))$. Then, we have that 
        \begin{align*}
            L_k
            & = \Expect \left[ \exp\left(t \sum_{(i,j)\in O} \left(A_{\hpi(i)\hpi(j)}  -A_{\pi(i)\pi(j)} \right)\right)B_{ij} \right] \\
            & = \Expect\left[ \expect{\exp\left(\sum_{\iii=1}^k t \left(a_{\iii+1} -a_{\iii} \right) b_{\iii}\right)\bigg | a_1,a_2,\cdots, a_k}\right]\\
            & = \Expect\left[ \prod_{\iii=1}^k  \expect{\exp\left( t \left( a_{\iii+1} -a_{\iii}  \right) b_{\iii}\right) \bigg | a_{\iii}, a_{\iii+1} } \right].
        \end{align*}
        \begin{itemize}
            \item For \ER random graphs, $L_k=\mathrm{tr}\left(L^{k}\right) $,
            where $L$ is a $2\times 2$ row-stochastic matrix with rows and columns indexed by $\{0,1\}$ and 
            \begin{align*}
                L(\ell,m) & = \Expect \left[\exp\left(t \left(a_{\iii+1} -a_{\iii} \right)  b_{\iii} \right) | a_{\iii} = \ell, a_{\iii+1}= m \right] \Prob\left(a_{\iii+1}= m \right). \\
                &= \Expect \left[\exp\left(t (m-\ell) b_{\iii} \right) | a_{\iii} = \ell \right] \Prob\left(a_{\iii+1} = m \right).
            \end{align*}
            Explicitly, we have
            \[
            L = \begin{pmatrix}
            1-p_{10}-p_{11} & \left(\frac{p_{01}}{  1-p_{10}-p_{11} } e^t + \frac{p_{00}}{ 1-p_{10}-p_{11} }\right)(p_{10}+p_{11})\\
            \left(\frac{p_{11}}{p_{10}+p_{11}} (e^t-1)+1\right)(p_{10}+p_{11})&     p_{10}+p_{11} 
            \end{pmatrix} \, .
            \]
            The eigenvalues of $L$ are $\frac{T- \sqrt{T^2-4D}}{2} $ and $\frac{T + \sqrt{T^2 -4D}}{2}$, where $T=1$ and 
            \[ D 
            =- \left(p_{01}p_{10}\left(e^t-1\right) +p_{00}p_{11}\left(e^{-t}-1\right) \right). \] 
            
            \item For Gaussian   model, 
            \begin{align*}
                L_k
                & =\Expect\left[ \prod_{\iii=1}^k  \exp\left( t \left(a_{\iii+1} -a_{\iii} \right) \rho a_{\iii} +  t^2 \left(a_{\iii+1}-a_{\iii} \right)^2 \left(1-\rho^2\right)/2  \right)\right]\\
                & = \Expect\left[ \exp\left(\left( t^2  \left(1-\rho^2\right) -  t \rho \right) \sum_{\iii=1}^k\left( a_{\iii}^2 - a_{\iii} a_{\iii+1} \right)\right)\right] \\
                &= \frac{1}{\left(\frac{\lambda_1+ \lambda_2}{2}\right)^{k}-\left(\frac{\lambda_1- \lambda_2}{2}\right)^{k}}. 
            \end{align*}
            where the first equality follows from  $b_{\iii} \sim \calN(\rho a_{\iii}, 1-\rho^2)$ conditional on $a_{\iii}$ 
            and $\Expect\left[\exp\left(t Z\right)\right] = \exp\left(t \mu + t^2\nu^2/2\right)$ for $Z\sim \calN(\mu,\nu^2)$ 
            ;
            the last equality holds by change of variables and
            Gaussian integral analogous to the proof of \prettyref{lmm:expect_exp_AB_pi}. 
        \end{itemize}
        Finally, $L_k \le (L_2)^{k/2}$ for $k \ge 2$ follows from the same reasoning as in \prettyref{lmm:expect_exp_AB_pi}. 
    \end{proof}

  \section{Proof of \prettyref{lmm:M_2}}\label{app:M_2}
     In view of  \prettyref{eq:ER_expect_exp_AB_pi} in \prettyref{lmm:expect_exp_AB_pi}, 
        $M_1=T$  and 
        \begin{align}
             M_2=T^2-2D \, , \label{eq:M_2}
        \end{align}
        where 
        by letting $a = e^{t(1-q)^2}$, $b=e^{-tq(1-q)}$, and $c =e^{tq^2}$, we get
        \[
         T = a qs + 2 b q(1-s) + c (1-2q+qs), \quad D =  \left(ac -b^2 \right)q(s-q) \, . 
        \]
        \begin{itemize}
            \item  Suppose $t \le \log \frac{1}{p}$. Since $ 1+ x \le  e^{x} \le 1+ x + x^2$ for $ 0 \le x \le 1$ and $ 1+ x+x^2/2 +x^3 \le  e^{x} \le 1+ x + x^2/2$ for $-1 \le x < 0$, we get  
         \begin{align*}
             1- tq(1-q) + \frac{1}{2} t^2 q^2(1-q)^2 -  t^3 q^3 & \le b \le 1- tq(1-q) + \frac{1}{2} t^2 q^2(1-q)^2 \\
             1+ tq^2  
             & \le c \le 1+ tq^2 + t^2 q^4 \,.
         \end{align*}
        It follows that
        \begin{align*}
             T 
            & \le e^{t(1-q)^2}  qs + 2 \left(1- tq(1-q) + \frac{1}{2} t^2 q^2(1-q)^2 \right) q(1-s) + \left(1+ tq^2 + t^2 q^4\right) (1-2q+qs) \\
            & = \alpha_0 + \alpha_1 t + \alpha_2 t^2 
            \, ,
        \end{align*}
        where
        \begin{align}
                \alpha_0 
                & =  2 q(1-s) + (1-2q+qs) + e^{t(1-q)^2}  qs\label{eq:alpha_10}  \, ,\\
                \alpha_1
                & =- 2 q (1-q)q (1-s) +  q^2 (1-2q+qs) \label{eq:alpha_11}  \, ,\\
                \alpha_2 
                & =  q^2(1-q)^2 q(1-s) +  q^4 (1-2q+qs)  \, . \label{eq:alpha_12}
        \end{align}
        Moreover, 
        \begin{align*}
            D 
            & \ge e^{t(1-q)^2}\left(1+ tq^2 \right) q(s-q)  -\left(1- tq(1-q) + \frac{1}{2} t^2 q^2(1-q)^2 \right)^2 q(s-q)\\
            & = 
            \beta_0 + \beta_1 t + \beta_2 t^2+  \beta_3 t^3 + \beta_4 t^4 
            \, , 
        \end{align*}
        where
        \begin{align}
            \beta_0 
            & = - q(s-q) +  e^{t(1-q)^2} q(s-q) \label{eq:beta_10} \, ,\\
            \beta_1 
            & = 2 q^2  (1-q) (s-q) +  e^{t(1-q)^2}q^3 (s-q)\label{eq:beta_11} \, ,\\
            \beta_2 
            & =- 2 q^3 (1-q)^2(s-q) \label{eq:beta_12} \, ,\\
            \beta_3
            & = q^4 \left(1-q\right)^3 (s-q) \label{eq:beta_13} \, ,\\
            \beta_4
            & = -\frac{1}{4} q^5\left(1-q\right)^4(s-q) \label{eq:beta_14}  \, . 
        \end{align}
        By \prettyref{eq:M_2}, we get 
        \begin{align*}
            M_2  = 
            T^2 - 2D 
            & \le \left(\alpha_0 + \alpha_1 t + \alpha_2 t^2\right)^2  - 2\left(\beta_0 + \beta_1 t + \beta_2 t^2+  \beta_3 t^3 + \beta_4 t^4 \right) \\
            & = \gamma_0 + \gamma_1 t + \gamma_2 t^2 +\gamma_3 t^3 +\gamma_4 t^4 
        \end{align*}
        where 
        \begin{align}
              \gamma_0 
            & = \alpha_0^2 -2 \beta_0  \label{eq:gamma_10}\\
            \gamma_1
            & = 2 \alpha_0\alpha_1 -2 \beta_1  \label{eq:gamma_11}\\
              \gamma_2 
            & = \alpha_1^2 + 2\alpha_0\alpha_2 -2\beta_2  \label{eq:gamma_12}\\
             \gamma_3 
            & =  2 \alpha_1 \alpha_2 - 2\beta_3  \label{eq:gamma_13}\\
            \gamma_4 
            & = \alpha_2^2 - 2\beta_4   \label{eq:gamma_14}
        \end{align}
        By \prettyref{eq:alpha_10}, \prettyref{eq:beta_10} and \prettyref{eq:gamma_10},  we get
           \begin{align*}
            \gamma_0 
            & = \alpha_0^2 -2 \beta_0   \\
             & =  \left(  1-qs + e^{t(1-q)^2}  qs \right)^2 - 2 \left(- q(s-q) +  e^{t(1-q)^2} q(s-q)  \right)  \\
            & = 1-2q^2+q^2s^2 + 2e^{t\left(1-q\right)^2} q^2 \left(1-s^2\right)+ e^{2t\left(1-q\right)^2}q^2s^2  \, ,\\
            & \le 1-2q^2+q^2s^2 +2e^{t} q^2 \left(1-s^2\right)+  e^{2t}q^2s^2  \, . 
        \end{align*}
        Note that the above bound on $\gamma_0$ determines the main terms in our final bound on $M_2$. 
        For the remaining terms, we will show
        $\sum_{i=1}^4\gamma_i t^i = O(q^3 t (1+t) )$ so that they can be made negligible later when we apply the bound on $M_2$ in the proof of \prettyref{lmm:ER_fixing_k} (see \prettyref{eq:extra_term}). 
        
        By \prettyref{eq:alpha_10}, \prettyref{eq:alpha_11},  \prettyref{eq:beta_11} and \prettyref{eq:gamma_11}, we get
         \begin{align*}
            \gamma_1
            & = 2 \alpha_0\alpha_1 -2 \beta_1 \\
            & =-2q^2+4q^3+4q^3s-4q^3s^2-4q^4+2q^4s^2 +  \left(-4q^3s+4q^3s^2+2q^4-2q^4s^2\right) e^{t\left(1-q\right)^2} 
           \\
            & \overset{(
            a)}{\le}   4q^3 +2  e^{t(1-q)^2} q^4  \overset{(
            b)}{\le}   6q^3 \, ,
        \end{align*}
        where $(a)$ holds because $-2q^2 + 4q^3 \le 0 $ given $q \le \frac{1}{2}$,  $ -4q^3s^2 - 4q^4 + 2q^4s^2 \le 0 $ and $ -4q^3s + 4q^3s^2 + 2q^4 - 2q^4s^2 \le 2q^4 $; $(b)$ holds because 
        $ 2 e^{ t \left(1-q\right)^2} q^4 \le 2 e^t q^4 \le 2 q^3 $,
        given $t \le \log \frac{1}{p}$. By \prettyref{eq:alpha_10}, \prettyref{eq:alpha_11},  \prettyref{eq:alpha_12}, \prettyref{eq:beta_12} and \prettyref{eq:gamma_12}, we get 
        \begin{align*}
            \gamma_2 
            & = \alpha_1^2 + 2\alpha_0\alpha_2 -2\beta_2 \\
            & = 2q^3 -2q^3s-q^4-4q^4s+6q^4s^2+10q^5s-8q^5s^2-6q^6-4q^6s+q^6s^2 +6q^7 +2q^7s-2q^8\\
            &~~~~ +e^{t\left(1-q\right)^2}\left(2q^4s-2 q^4s^2-2q^5s+ 4q^5s^2 +2q^6s \right)  \\
            & \overset{(a)}{\le } 2q^3 -2q^3s+q^4s^2  +10q^5s  +2 e^{t}q^4s  \overset{(b)}{\le }  6q^3 \, ,
        \end{align*}
        where $(a)$ holds because $-q^4-4q^4s+6q^4s^2 \le q^4 s^2$, $-8q^5s^2-6q^6-4q^6s+q^6s^2 +6q^7 +2q^7s-2q^8 \le 0$, and $-2 q^4s^2-2q^5s+ 4q^5s^2 +2q^6s \le 0$ given $q\le \frac{1}{2}$; $(b)$ holds because $ -2q^3s+q^4s^2  +10q^5s \le 2 q^3$ given $q\le \frac{1}{2}$, $2 e^{t}q^4s  \le 2q^3 s \le 2q^3$ given $t \le \log \frac{1}{p}$. By \prettyref{eq:alpha_11}, \prettyref{eq:alpha_12}, \prettyref{eq:beta_13} and \prettyref{eq:gamma_13}, we get
         \begin{align*}
            \gamma_3 
            & =  2 \alpha_1 \alpha_2 - 2\beta_3 \\
            & = -2q^4s+12q^5s-4q^5s^2-4q^6-16q^6s+10q^6s^2+8q^7-4q^7s^2-2q^8+ 2q^8s \\
            & \overset{(a)}{\le} -2q^4s+12q^5s \overset{(b)}{\le}  4 q^4 \, ,
        \end{align*}
        where $(a)$ holds because $-4q^5s^2-4q^6-16q^6s+10q^6s^2+8q^7  \le 0$ and $-4q^7s^2-2q^8+ 2q^8s  \le 0$; 
        $(b)$ holds by $q\le \frac{1}{2}$.  
        By \prettyref{eq:alpha_12}, \prettyref{eq:beta_14} and \prettyref{eq:gamma_14}, we get 
       \begin{align*}
            \gamma_4 
            & = \alpha_2^2 - 2\beta_4 \\
            & = \frac{1}{2}(q^5s+q^6-8q^6s+2q^6s^2+18q^7s-8q^7s^2-8q^8-8q^8s+8q^8s^2+8q^9-7q^9s+q^{10}) \\
            &  \overset{(a)}{\le}  \frac{1}{2}(q^5s+q^6-8q^6s+2q^6s^2+18q^7s)   \overset{(b)}{\le}  2q^5  \, ,
        \end{align*}
        where $(a)$ holds because $-8q^7s^2-8q^8-8q^8s+8q^8s^2+8q^9 \le 0$, and $-7q^9s+q^{10} \le 0$; $(b)$ holds because $-8q^6s + 2q^6s^2+18q^7s \le  4 q^6s$ and $q^5s+q^6+4 q^6s \leq 4 q^5$, given $q\le \frac{1}{2}$. 
       
        Combining the above bounds, we have
        \begin{align*}
            M_2 
            & \le   1- 2q^2  +q^2s^2 +  6q^3 t+  6q^3 t^2 + 4 q^4 t^3 +   2 q^5  t^4  + 2 e^{t}q^2 (1-s^2)+ e^{2t}  q^2s^2  \\
            & =  1- 2q^2  +q^2s^2 + q^3 t \left(6 +  6t  + 4 q t^2 +   2 q^2  t^3\right) +   2 e^{t}q^2 (1-s^2)+ e^{2t}  q^2s^2 \\  
            & \le  1- 2q^2  +q^2s^2 + 10 q^3 t(1+t)+ 2 e^{t}q^2 (1-s^2)    + e^{2t}  q^2s^2  \, ,
        \end{align*}
        where the last inequality holds because 
        $qt^2 \le 1$ and $q^2 t^3 \le 1$  given $t \le \log \frac{1}{p}$. 
        

        Moreover,  given $M_1 = T$ and  $M_2 = T^2 -2D$, 
        \begin{align*}
            \frac{M_1^2}{M_2} =  1+ \frac{2D}{M_2} \le 1+ 2D \le e^2 \, , 
        \end{align*}
        where the first inequality holds because $M_2 \ge 1$ by definition, 
        and the last inequality holds because 
        \begin{align*}
            D = (ac-b^2)q(s-q)   \overset{(a)}{\le} ac q(s-q)
            & \le e^{t}\left(1+ tq^2 + t^2 q^4\right) q(s-q)  \overset{(b)}{\le}  1+ tq^2 + t^2 q^4  \overset{(c)}{\le}  3 \, ,
        \end{align*}
        where $(a)$ holds because $b^2 q(s-q) \ge 0$; $(b)$ holds because  $e^t q \le 1$ given $t \le \log \frac{1}{p}$; $(c)$ holds because $ qt \le 1$, given $t \le \log \frac{1}{p} $.

        \item  Suppose $t \le \frac{1}{2^{10}}$.  Since $1+ x+x^2/2 \le  e^{x} \le 1+ x + x^2/2 + x^3$ for $ 0 \le x \le 1$ and $ 1+ x+x^2/2 +x^3 \le  e^{x} \le 1+ x + x^2/2$ for $-1 \le x< 0$, we get
        \begin{align*}
                1 +t (1-q)^2 + \frac{1}{2} t^2 (1-q)^4 
                & \le a \le 1 +t (1-q)^2 + \frac{1}{2} t^2 (1-q)^4 + t^3 (1-q)^6 \, , \\
                1- tq(1-q) + \frac{1}{2} t^2 q^2(1-q)^2 -  t^3 q^3 & \le b \le 1- tq(1-q) + \frac{1}{2} t^2 q^2(1-q)^2\, , \\
                1+ tq^2 + \frac{1}{2}t^2 q^4 
                & \le c \le 1+ tq^2 + \frac{1}{2}t^2 q^4 + t^3 q^6 \, .
        \end{align*} 
        It follows that
        \begin{align*}
            T 
            & \le \left(1 +t \left(1-q\right)^2 + \frac{1}{2} t^2 \left(1-q\right)^4 + t^3 \left(1-q\right)^6\right)qs  \\
            &~~~~ + 2\left(1- tq\left(1-q\right) + \frac{1}{2} t^2 q^2\left(1-q\right)^2\right)q\left(1-s\right) \\
            &~~~~ + \left(1+ tq^2 + \frac{1}{2}t^2 q^4 + t^3 q^6\right)\left(1-2q+qs\right) \\
            &  = 1+ \alpha_1 t+ \alpha_2 t^2 + \alpha_3 t^3 \, ,
        \end{align*}
        where
        \begin{align}
             \alpha_1 & = \left(1-q\right)^2qs- 2 q\left(1-q\right)q\left(1-s\right) +q^2 \left(1-2q+qs\right)= q(s-q) \label{eq:alpha_21} \, , \\
            \alpha_2 & = \frac{1}{2} \left(1-q\right)^4qs + q^2\left(1-q\right)^2 q\left(1-s\right) +\frac{1}{2} q^4 \left(1-2q+qs\right)  \label{eq:alpha_22} \, , \\
            \alpha_3 
            & = \left(1-q\right)^6 qs+q^6 \left(1-2q+qs\right) \label{eq:alpha_23} \, . 
        \end{align}
        And
        \begin{align*}
            D 
            & \ge \left(1 +t \left(1-q\right)^2 + \frac{1}{2} t^2 \left(1-q\right)^4 \right)\left(1+ tq^2 + \frac{1}{2}t^2 q^4 \right)q(s-q)\\
            &~~~~  -\left(1- tq\left(1-q\right) + \frac{1}{2} t^2 q^2\left(1-q\right)^2  \right)^2 q(s-q) \\
            & = \beta_1 t +\beta_2 t^2 + \beta_3 t^3 + \beta_4 t^4 
            \, ,
            \end{align*}
            where 
            \begin{align}
                \beta_1 & 
                =  \left(1-q\right)^2q(s-q)+ q^3 (s-q) + 2q\left(1-q\right)q(s-q) =q(s-q)  \label{eq:beta_21} \, , \\
                \beta_2 & =  \frac{1}{2} \left(1-q\right)^4q(s-q) +  \frac{1}{2}  q^5 (s-q)+ \left(1-q\right)^2 q^3 (s-q) \nonumber \\ 
                &~~~~ - q^2 \left(1-q\right)^2 q(s-q)-q^2\left(1-q\right)^2  q(s-q) \nonumber \\ 
                & = \frac{1}{2} \left(1-4q+4q^2\right)q(s-q) \label{eq:beta_22}  \, , \\
                \beta_3
                & = \frac{1}{2}(1-q)^2q^5(s-q) +\frac{1}{2} (1-q)^4q^3(s-q) + q(1-q)q^2(1-q)^2 q(s-q) \nonumber \\
                & = \frac{1}{2}\left(q^2-2q^3+q^4\right)q(s-q)\label{eq:beta_23}  \, , \\
                \beta_4
                  &  = \frac{1}{4}(1-q)^4q^5(s-q)- \frac{1}{4}(1-q)^4q^5(s-q)
                  = 0  \, . \label{eq:beta_24}
            \end{align}
        By \prettyref{eq:M_2}, we get that 
        \begin{align*}
            M_2 
             = T^2 -2D 
            & \le  \left(1+ \alpha_1 t+ \alpha_2 t^2 + \alpha_3 t^3 \right)^2 - 2 \left( \beta_1 t +\beta_2 t^2 + \beta_3 t^3 + \beta_4 t^4 \right) \\
            & \le 1+ \gamma_1 t + \gamma_2 t^2 + \gamma_3 t^3+ \gamma_4 t^4 + \gamma_5 t^5+ \gamma_6 t^6 
            \, ,
       \end{align*}
        where
        \begin{align}
              \gamma_1 
            & = \alpha_1-\beta_1  \label{eq:gamma_21}\, , \\
            \gamma_2 
            & =\alpha_1^2 +2 \alpha_2 -2\beta_2 \label{eq:gamma_22}\, , \\
              \gamma_3
            & = 2\alpha_1\alpha_2 +2 \alpha_3-2 \beta_3\label{eq:gamma_23} \, ,\\
               \gamma_4  
            & =\alpha_2^2 +2 \alpha_1 \alpha_3 -2 \beta_4  \label{eq:gamma_24} \, ,\\
            \gamma_5
             & = 2 \alpha_2 \alpha_3  \label{eq:gamma_25} \, , \\
             \gamma_6 
             & = \alpha_3^2 \label{eq:gamma_26}  \, . 
        \end{align}
        Next, we show $\gamma_1=0$ and get a tight bound on $\gamma_2$, which governs our  final bound to $M_2$. For the remaining terms, 
        we will show $\sum_{i=3}^6\gamma_i t^i = O(t^3 qs)$  so that they can be made negligible later when we apply the bound on $M_2$ in the proof of \prettyref{lmm:ER_fixing_k} (See the proof of Case 2 of \ER graphs). 
        
        By \prettyref{eq:alpha_21}, \prettyref{eq:beta_21} and \prettyref{eq:gamma_21}, we get 
        \begin{align*}
            \gamma_1 
            & = \alpha_1-\beta_1 =0    \, .
        \end{align*}
        Recall that $\rho = \frac{s(1-p)}{1-ps}$ and $\sigma^2 = ps(1-ps)$.  By \prettyref{eq:alpha_21}, \prettyref{eq:alpha_22}, \prettyref{eq:beta_22} and \prettyref{eq:gamma_22}, we get 
        \begin{align*}
            \gamma_2 
            & =\alpha_1^2 +2 \alpha_2 -2\beta_2 
             = q^2+q^2s^2-2q^3 -2q^3s +2q^4=  \sigma^4 \left(1+ \rho^2 \right)  \, . 
          \end{align*}
        By \prettyref{eq:alpha_21}, \prettyref{eq:alpha_22}, \prettyref{eq:alpha_23},\prettyref{eq:beta_23} and \prettyref{eq:gamma_23}, we get
        \begin{align*}
            \gamma_3
             & = 2\alpha_1\alpha_2 +2 \alpha_3-2 \beta_3 \\
            & = 2qs-12q^2s+q^2s^2+28q^3s-4q^3s^2+q^4-32q^4s +4q^4s^2-4q^5+22q^5s+6q^6-12q^6s-4q^7+4q^7s\\
            & \overset{(a)}{\le} 2qs-12q^2s+q^2s^2+28q^3s \overset{(b)}{\le}  4qs \, ,
          \end{align*}
         where $(a)$ holds because $-4q^3s^2+q^4 \le 0$, $-32q^4s  +4q^4s^2-4q^5+22q^5s+6q^6 \le 0$ and $-12q^6s-4q^7+4q^7s \le 0$; $(b)$ holds because 
         $-12q^2s+q^2s^2+28q^3s \le  -12q^2s+q^2s^2+14 q^2s  \le 4 q^2s \le 2 qs$ 
         given $q\le \frac{1}{2}$. 
         By \prettyref{eq:alpha_21}, \prettyref{eq:alpha_22}, \prettyref{eq:alpha_23}, \prettyref{eq:beta_24} and \prettyref{eq:gamma_24},  we get
          \begin{align*}
            \gamma_4  
            & =\alpha_2^2 +2 \alpha_1 \alpha_3 -2 \beta_4  \\
            & = \frac{1}{4} ( 9q^2s^2-12q^3s-56q^3s^2+4q^4+60q^4s+144q^4s^2-8q^5-146q^5s-192q^5s^2+8q^6\\
            &~~~~  +200q^6s+136q^6s^2-12q^7-136q^7s-48q^7s^2+q^8+32q^8s+16q^8s^2+16q^9-16q^9s) \\
            & \overset{(a)}{\le} \frac{1}{4} ( 9q^2s^2-12q^3s-56q^3s^2+4q^4+60q^4s+144q^4s^2 )  \overset{(b)}{\le} 13 q^2s^2  \, , 
          \end{align*}
        where $(a)$ holds because 
        \begin{align*}
            & -8q^5-146q^5s-192q^5s^2+8q^6  +200q^6s+136q^6s^2 \\
            & =\left( -8q^5+8q^6 \right)+ \left( -146q^5s-192q^5s^2 \right)+ \left(200q^6s+136q^6s^2 \right)\\
            & \le -338q^5s^2 +336q^6s \le 0 \, ,
        \end{align*} and
        \begin{align*}
            & -12q^7-136q^7s-48q^7s^2 + q^8 + 32q^8s + 16 q^8s^2 + 16q^9 \\
            & = \left(-12q^7+ q^8\right) + \left(-136q^7s-48q^7s^2 \right)+ \left(32q^8s + 16 q^8s^2 + 16q^9 \right) \\
            & \le -184q^7s^2 + 64q^8s \le 0 \, ;
        \end{align*}
        $(b)$ holds by holds because 
        $-12q^3s+4q^4 \le -8 q^3 s$, $60q^4s+144q^4s^2 \le 30q^3s+72q^3s^2 $, and then $-8 q^3 s -56q^3s^2 +30q^3s+72q^3s^2  \le 24q^3s+16 q^3s^2  \le  32 q^2s^2 $, given  $q\le \frac{1}{2}$. By \prettyref{eq:alpha_22}, \prettyref{eq:alpha_23} and \prettyref{eq:gamma_25}, we get
         \begin{align*}
            \gamma_5
             & = 2 \alpha_2 \alpha_3 \\
             & = q^2 \left(s-4qs+2q^2+4q^2s-3q^3\right)\\
             &~~~~ \left(s-6qs+15q^2s-20q^3s+15q^4s-6q^5s+q^5-2q^6+2q^6s\right) \\
             & \le q^2 \left(s+ 4qs+2q^2+4q^2s+ 3q^3\right)\\
             &~~~~ \left(s+6qs+15q^2s+20q^3s+15q^4s+6q^5s+q^5+2q^6+2q^6s\right)  \\
             & = q^2 s^2 \left(1+ 4q+2q p+4q^2+ 3q^2 p\right)\left(1+6q+15q^2+20q^3+15q^4+6q^5+q^4p+2q^5p+2q^6\right) \\
             & \le \left( 14 \cdot 68 \right) q^2 s^2  \le
             2^{10} q^2 s^2 \, .
          \end{align*}
        By \prettyref{eq:alpha_23} and \prettyref{eq:gamma_26}, we get
         \begin{align*}
             \gamma_6
            & = \alpha_3^2 = q^2 \left(s-6qs+15q^2s-20q^3s+15q^4s-6q^5s+q^5-2q^6+2q^6s\right)^2 \\
            & \le q^2 \left(s+6qs+15q^2s+20q^3s+15q^4s+6q^5s+q^5+2q^6+2q^6s\right)^2 \\
            & \le q^2s^2 \left(1+6q+15q^2+20q^3+15q^4+6q^5+q^4p+2q^5p+2q^6\right)^2 \le (68)^2 q^2s^2\le 2^{13 }q^2 s^2\, . 
        \end{align*}
        Then, we have 
        \begin{align*}
            M_2 
            & \le 1 + t^2 \sigma^4 \left(1+ \rho^2 \right) +  4 t^3qs + 13 t^4 q^2s^2 +  2^{10} t^5 q^2 s^2 + 2^{13 } t^6 q^2 s^2  \\
            & \le  1 + t^2 \sigma^4 \left(1+ \rho^2 \right) +  8 t^3qs  \, ,
        \end{align*}
        where the last inequality holds because 
        $13 t^4 +  2^{10} t^5 + 2^{12 } t^6 \le 4 t^3$ 
        given $t \le \frac{1}{2^{10}}$.

        Moreover,  given $M_1 = T$ and  $M_2 = T^2 -2D$, 
        \begin{align*}
            \frac{M_1^2}{M_2} =  1+ \frac{2D}{T^2} \le 1+ 2D \le e^2 \, , 
        \end{align*}
        where the first inequality holds because $M_2 \ge 1$ by definition, 
        and the second inequality holds because 
        \begin{align*}
            D
             & = (ac-b^2)q(s-q) \\
             & \stepa{\le} ac q(s-q) \\
             & \le  \left(1  +t  \left(1-q\right)^2  +  \frac{1}{2}  t^2  \left(1-q\right)^4+  t^3  \left(1-q\right)^6  \right)\left(1+  tq^2  +  \frac{1}{2}t^2  q^4  +  t^3  q^6  \right)q(s-q)  \stepb{\le} 3 \, ,
        \end{align*}
        where $(a)$ holds because $b^2 q(s-q) \ge 0$; $(b)$ holds because 
        $1+  t  +  \frac{1}{2}t^2  +  t^3 \le \frac{3}{2}$ 
        given $t \le \frac{1}{2^{10}}$. 
        \end{itemize}

    \section{Auxiliary results}\label{app:phix}

    
    The following lemma stated in \cite[Lemma 10]{wu2020testing} follows from the Hanson-Wright inequality \cite{hanson1971bound,rudelson2013hanson}. 
    \begin{lemma}[Hanson-Wright inequality] \label{lmm:hw}
        Let $U, V \in \reals^n$ are standard Gaussian vectors  such that the pairs 
        $(U_k, V_k)\sim \calN  \Big( \left(\begin{smallmatrix} 0\\ 0\end{smallmatrix}\right) , \left(\begin{smallmatrix} 1 & \rho \\ \rho & 1 \end{smallmatrix}\right)  \Big )
        $ are independent for $k=1,\ldots,n$.
        Let $M \in \reals^{n \times n}$ be any deterministic matrix. There exists some universal constant $c>0$
        such that 
        with probability at least $1-\delta$,
        \begin{align}
            \left| U^\top M V -  \rho \Tr(M) \right| \le c \left( \|M\|_F \sqrt{ \log (1/\delta) } +  \| M \|_2 \log (1/\delta) \right) \label{eq:hw_bilinear}. 
        \end{align}
    \end{lemma}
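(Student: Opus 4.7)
The plan is to reduce \prettyref{eq:hw_bilinear} to two classical applications of the Hanson--Wright inequality via a standard Gaussian decomposition. Since the pairs $(U_k,V_k)$ are jointly Gaussian with correlation $\rho$ and independent across $k$, I can write $V = \rho U + \sqrt{1-\rho^2}\, Z$, where $Z\in\reals^n$ is a standard Gaussian vector independent of $U$. Substituting gives the decomposition
\[
U^\top M V - \rho \Tr(M) \;=\; \rho\bigl(U^\top M U - \Tr(M)\bigr) \;+\; \sqrt{1-\rho^2}\, U^\top M Z,
\]
so it suffices to control each piece separately and combine them by a union bound, using $|\rho|\le 1$ and $\sqrt{1-\rho^2}\le 1$ to absorb factors into the universal constant $c$.

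For the first piece, $U^\top M U - \Tr(M)$ is the deviation of a Gaussian quadratic form from its mean. Writing $M = M_{\mathrm{sym}} + M_{\mathrm{anti}}$ with $M_{\mathrm{sym}} = (M+M^\top)/2$ and $M_{\mathrm{anti}} = (M-M^\top)/2$ kills the antisymmetric part (since $U^\top M_{\mathrm{anti}} U = 0$) and preserves $\Tr(M_{\mathrm{sym}}) = \Tr(M)$. The classical Hanson--Wright inequality (e.g.\ \cite{rudelson2013hanson}) then yields, for some absolute constant $c_1$,
\[
\prob{\,|U^\top M U - \Tr(M)| \le c_1\bigl(\|M_{\mathrm{sym}}\|_F\sqrt{\log(2/\delta)} + \|M_{\mathrm{sym}}\|_2 \log(2/\delta)\bigr)\,} \ge 1-\delta/2,
\]
and $\|M_{\mathrm{sym}}\|_F \le \|M\|_F$, $\|M_{\mathrm{sym}}\|_2 \le \|M\|_2$ by the triangle inequality.

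For the bilinear form $U^\top M Z$ with independent standard Gaussians, I will embed it as a quadratic form in the concatenation $W = (U^\top, Z^\top)^\top \in \reals^{2n}$: setting
\[
\widetilde M = \begin{pmatrix} 0 & M \\ M^\top & 0 \end{pmatrix}, \qquad U^\top M Z = \tfrac{1}{2} W^\top \widetilde M W,
\]
we have $\Expect[W^\top \widetilde M W] = \Tr(\widetilde M) = 0$, so a second application of Hanson--Wright gives the deviation bound with norms $\|\widetilde M\|_F = \sqrt{2}\,\|M\|_F$ and $\|\widetilde M\|_2 = \|M\|_2$; these differ from $\|M\|_F,\|M\|_2$ only by a harmless constant. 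A union bound over the two events (each of probability $\ge 1-\delta/2$) completes the argument. There is no substantive obstacle here: the only nonroutine ingredient is the classical Hanson--Wright inequality itself, and the work is simply the Gaussian decomposition and bookkeeping of constants.
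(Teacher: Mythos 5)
Your argument is correct, and it is worth noting that the paper does not prove this lemma at all: it simply imports it from \cite[Lemma 10]{wu2020testing}, remarking that it ``follows from the Hanson--Wright inequality.'' Your proposal fills in exactly the kind of derivation that citation alludes to. The decomposition $V=\rho U+\sqrt{1-\rho^2}\,Z$ with $Z\perp U$ is valid for jointly Gaussian pairs with unit variances, and the split
$U^\top M V-\rho\Tr(M)=\rho\left(U^\top M U-\Tr(M)\right)+\sqrt{1-\rho^2}\,U^\top M Z$
is exact. Symmetrizing $M$ for the quadratic piece (the antisymmetric part annihilates, $\Tr$ and both norms only shrink) and embedding the bilinear piece as $\tfrac12 W^\top\widetilde M W$ with $W=(U^\top,Z^\top)^\top$, $\|\widetilde M\|_F=\sqrt2\,\|M\|_F$, $\|\widetilde M\|_2=\|M\|_2$, are both standard and correct, and the union bound with $\delta/2$ per event only costs a constant (for $\delta$ bounded away from $0$ one can absorb the discrepancy between $\log(2/\delta)$ and $\log(1/\delta)$ into $c$ via a crude variance bound, a routine point). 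An equally common alternative, which some versions of \cite{rudelson2013hanson} state directly, is to invoke the bilinear (decoupled) form of Hanson--Wright for $U^\top M Z$ without the block-matrix trick, but your embedding is a clean substitute. In short: correct, self-contained, and more explicit than what the paper provides.
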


        \begin{lemma} \label{lmm:phix}
        The function $\phi$ in \prettyref{eq:phix}, namely
          \begin{align*}
        \phi\left(x\right) \triangleq 
          \frac{ \log \frac{1}{x} -1+x }{x \left(\log \frac{1}{x}\right) \left(1+ \log \frac{1}{x} \right) } \,  
        \end{align*} 
       is monotonically decreasing on $x \in (0,1)$. 
        \end{lemma}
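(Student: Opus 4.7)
The plan is to eliminate the nested logarithms by the substitution $u=\log(1/x)\in(0,\infty)$, so that $x=e^{-u}$ and the monotonicity of $\phi$ in $x$ (decreasing) is equivalent to the monotonicity of
\[
\psi(u) \;\triangleq\; \phi(e^{-u}) \;=\; \frac{u-1+e^{-u}}{e^{-u}\,u(1+u)} \;=\; \frac{u e^{u}-e^{u}+1}{u(1+u)}
\]
in $u$ (increasing). The advantage is that $\psi$ is now a ratio of elementary functions (a polynomial–exponential over a polynomial), which is far easier to differentiate than $\phi$ directly.

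Next I would compute $\psi'(u)$ by the quotient rule. Writing $N(u)=ue^u-e^u+1$ and $D(u)=u+u^2$, one finds $N'(u)=ue^u$ and $D'(u)=1+2u$, and the numerator of $\psi'(u)$ simplifies to
\[
f(u) \;\triangleq\; e^u\bigl(u^3-u^2+u+1\bigr) - 1 - 2u .
\]
The whole task then reduces to showing $f(u)>0$ for all $u>0$.

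I would establish this by a two-fold differentiation argument. Direct computation gives $f(0)=0$, $f'(u)=e^u(u^3+2u^2-u+2)-2$ with $f'(0)=0$, and
\[
f''(u) \;=\; e^u\bigl(u^3+5u^2+3u+1\bigr),
\]
which is manifestly strictly positive on $(0,\infty)$. Hence $f'$ is strictly increasing from $f'(0)=0$, so $f'>0$ on $(0,\infty)$; consequently $f$ is strictly increasing from $f(0)=0$, giving $f(u)>0$ for $u>0$. This yields $\psi'(u)>0$, i.e.\ $\psi$ is strictly increasing in $u$, which is equivalent to $\phi$ being strictly decreasing in $x$ on $(0,1)$.

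The only real obstacle is bookkeeping: one needs to expand $u^2 e^u(1+u)-(ue^u-e^u+1)(1+2u)$ carefully so that the coefficients of $e^u$ collapse to the convenient form $u^3-u^2+u+1$, and similarly for $f'$ and $f''$. Everything else is routine, and no new ideas are needed beyond the substitution and successive differentiation.
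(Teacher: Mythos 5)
Your proof is correct: the substitution $u=\log(1/x)$ is a strictly decreasing bijection of $(0,1)$ onto $(0,\infty)$, so $\phi$ decreasing in $x$ is indeed equivalent to $\psi(u)=\frac{ue^u-e^u+1}{u(1+u)}$ increasing in $u$; the quotient-rule numerator is, as you state, $f(u)=e^u(u^3-u^2+u+1)-1-2u$, and your computations $f(0)=0$, $f'(u)=e^u(u^3+2u^2-u+2)-2$, $f'(0)=0$, $f''(u)=e^u(u^3+5u^2+3u+1)>0$ all check out, giving $f>0$ and hence $\psi'>0$ on $(0,\infty)$. Structurally this is the same scheme as the paper's proof — differentiate, isolate the sign-determining numerator, and settle its sign by two further differentiations anchored at a boundary value — but the execution differs: the paper works directly in the variable $x$, writing $\phi'(x)=\psi(x)/\bigl(x^2(\log x-1)^2\log^2 x\bigr)$ with $\psi(x)=\log^3 x+\log^2 x+(1-2x)\log x+x-1$, then $\psi'(x)=\kappa(x)/x$, and must verify the sign of $\kappa'$ via ad hoc estimates such as $2x\log x\ge -1$ and $-6\log x+3x\ge 3$, with all monotonicity conclusions anchored at $x=1$. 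Your change of variables trades those nested-logarithm manipulations for a single polynomial–exponential inequality whose second derivative is manifestly positive (all coefficients of the cubic are nonnegative), so no auxiliary estimates are needed; the paper's version, on the other hand, stays in the original parametrization and requires no substitution. Both arguments are complete and of comparable length; yours is arguably slightly cleaner at the final sign check.
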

    \begin{proof}
      We have
    \begin{align}
    \phi'(x) = \dfrac{\psi(x)}{x^2\left(\log\left(x\right)-1\right)^2\log^2\left(x\right)} \, , \label{eq:phi'x}
    \end{align}
    where 
    \[
    \psi(x) \triangleq \log^3\left(x\right)+\log^2\left(x\right)+\left(1-2x\right)\log\left(x\right)+x-1 \, . 
    \]
    Next, write
    \begin{align}
    \psi'(x) = \dfrac{\kappa(x)}{x} \, , \label{eq:psi'x}
    \end{align}
    where 
    \[
    \kappa(x) \triangleq 3\log^2\left(x\right)+\left(2-2x\right)\log\left(x\right)-x+1 \, .
    \]
    Note that for $x \in (0,1)$, 
    \[
     \kappa'(x) = -\dfrac{2x\log\left(x\right) - 6\log\left(x\right) +3x-2 }{x} \le 0 \,,
    \]
    where the last inequality holds because $ 2 x \log\left(x\right)  \ge - 1$ and $ - 6  \log\left(x\right) +3x \ge 3$  for $x \in (0,1)$. 
    Thus, $\kappa (x)$ is monotone decreasing on $x \in (0,1)$ and then $ \kappa(x) > \kappa (1) = 0$ for $x \in (0,1) $. 
    By \prettyref{eq:psi'x}, $\psi'(x) \ge 0$ for $x\in (0,1)$, and then $\psi(x)$ is monotone increasing  on $x\in(0,1)$ with $\psi (x) < \psi(1)= 0$. Then, by \prettyref{eq:phi'x}, 
    $\phi'(x) < 0$ on $x\in(0,1)$, as desired.
    \end{proof}
  \section*{Acknowledgement}
  The authors thank Lele Wang for pointing out an error in the earlier proof of \prettyref{eq:Y_tail_bobound}. 

\bibliography{detection}
\bibliographystyle{alpha}

\end{document}